\numberwithin{equation}{section}
\begin{document}
\title[Sturm--Liouville operators with quadratic potentials at infinity]
{Zeta-determinants of Sturm--Liouville operators with quadratic potentials at infinity}

\author{Luiz Hartmann}
\address{Department of Mathematics, 
	Federal University of S\~ao Carlos (UFSCar),
	Brazil}
\email{hartmann@dm.ufscar.br}
\urladdr{http://www.dm.ufscar.br/profs/hartmann}

\author{Matthias Lesch}
\address{Mathematisches Institut,
Universit\"at Bonn,
Endenicher Allee 60,
53115 Bonn,
Germany}
\email{ml@matthiaslesch.de, lesch@math.uni-bonn.de}
\urladdr{www.matthiaslesch.de, www.math.uni-bonn.de/people/lesch}
\thanks{Partially supported by the 
        Hausdorff Center for Mathematics, Bonn}

\author{Boris Vertman} 
\address{Institute of Mathematics and Computer Science, University of M\"unster, Germany} 
\email{vertman@uni-muenster.de}
\urladdr{http://wwwmath.uni-muenster.de/42/arbeitsgruppen/ag-differentialgeometrie/}

\subjclass[2010]{58J52; 34B24}

\begin{abstract}
We consider Sturm--Liouville operators on a half line
$[a,\infty), a>0$, with potentials that are growing at most
quadratically at infinity. Such operators arise naturally in the
analysis of hyperbolic manifolds, or more generally manifolds with
cusps. We establish existence and a formula for the associated
zeta-determinant in terms of the Wronski-determinant of a
fundamental system of solutions adapted to the boundary
conditions.  Despite being the natural objects in the context of
hyperbolic geometry, spectral geometry of such operators has only
recently been studied in the context of analytic torsion.
\end{abstract}

\maketitle

\tableofcontents
\section{Introduction and formulation of the main results}
\label{s.intro}

In this paper we will investigate the zeta-determinant of Sturm--Liouville
operators with potentials that are growing quadratically at infinity. More
precisely, we consider operators of the form
\begin{equation}
  H  = -\frac{d}{dx}\Bigl( x^2 \frac{d}{dx}\cdot \Bigr) + x^2\mu^2 -\frac 14  +  V(x)
     =: D_\mu + V(x) 
     \label{eq.INTRO.cusp.op}
\end{equation}
on the interval $[a,\infty)$, $a>0$, with $\mu >0$ and only minimal regularity 
assumptions on 
the
potential $V$.  Ignoring the potential $V$ for a moment, such operators are
also referred to as totally characteristic operators and have been studied by
Melrose and Mendoza in \cite{MM}. However, the relation to our analysis here is
only formal, since \cite{MM} studies operators of totally characteristic
type near $x=0$, while here we are interested in the behavior of such
operators as $x$ approaches infinity.

Our motivation for looking at zeta-determinants of such operators arises from
geometry of hyperbolic manifolds or more generally manifolds with cusps.
Spectral geometry of such manifolds has been initiated by M\"uller in his
paper \cite{Mue-cusp}.  A recent work by the third named author \cite{Ver}
discusses analytic torsion on such spaces and in particular strongly relies on
computations of zeta-determinants of such operators.

With the present paper we intend to initiate further discussion of such
operators, parallel to developments in the setting of regular singular
operators, which in turn are motivated by the geometry of spaces with isolated
conical singularities.  Analysis of such spaces has been initiated by Cheeger
in his seminal papers \cite{Che:SGS}, \cite{Che:SGSR}, and corresponding 
zeta-determinants have been considered by the second named author in
\cite{Les:DRS}.  These results have been employed in various studies of
analytic torsion on conical singularities by the first named author jointly
with Spreafico \cite{HS}, \cite{HS-R-torsion} as well the third author jointly
with M\"uller \cite{MV}. We expect a similar impact of our discussion here in
the setting of manifolds with cusps. 

Before stating the main result of our paper, let us briefly recall the formula
for the $\zeta$--determinant of a second order Sturm--Liouville operator on a
finite interval with separated boundary conditions, \cf \cite{BFK:DEB}. Let
\begin{equation} \label{INTRO.eq.1}
  H_0 = -\frac{d}{dx}\Bigl( p(x) \frac{d}{dx}\cdot \Bigr) + V(x)
\end{equation}
be a differential operator on the finite interval $[a,b]$. Here, $p,
V$ are smooth functions and $p(x) > 0$ for all $x\in[a,b]$. We impose
separated boundary conditions at $a,b$ of the form
\begin{equation} \label{INTRO.eq.2}
  R_c f := \sin\theta_c \cdot f'(c) + \cos\theta_c \cdot f(c),\quad
   0\le \theta_c<\pi, c\in\{a,b\}.
\end{equation}  
A solution of the homogeneous equation $H_0 g=0$ is called
\emph{normalized} at $c$ if $R_c g  = 0$ and (we set $\operatorname{sgn} (a) = 1,  
\operatorname{sgn}(b) = -1$)
\begin{equation}\label{INTRO.eq.3}
\begin{split}
  g'(c) & = \operatorname{sgn}(c)\cdot p(c)^{-3/4}, \quad
  \text{ if } \theta_c = 0 \text{ (Dirichlet) },\\
  g(c)  & = p(c)^{-1/4},  \quad \quad \quad \text{ if } \theta_c > 0 \text{ (generalized
Neumann) }.
\end{split}
\end{equation}  
One might wonder where this normalization comes from. For a regular operator of the form 
\Eqref{INTRO.eq.1} there is a coordinate transformation $y(x):=\int_a^x 
p(x')^{-\frac{1}{2}} dx'$, which unitarily transforms the operator into a 
Sturm--Liouville operator of the form $-\partial_y^2 + V$. The known normalization for 
the latter operator, \cf \cite{Les:DRS}, is equivalent to \Eqref{INTRO.eq.3} under the 
transformation.

With this notation the following Theorem, which is a special case of a more
general result due to Burghelea, Friedlander and Kappeler, holds.

\begin{theorem}[\cite{BFK:DEB}]\label{T.BFK}
Let $\varphi,\psi$ be a fundamental system
of solutions to the differential equation $H_0 g=0$ with $R_a \varphi
=0, R_b\psi = 0$ and $\varphi$, $\psi$ being both normalized in the sense of
\Eqref{INTRO.eq.3}. Then the realization $H_0=H_0(R_a,R_b)$ of $H_0$ with respect
to the boundary conditions $R_a, R_b$ is self-adjoint and discrete.
Its $\zeta$--function has a meromorphic continuation to the complex plane
with simple poles. $0\in \C$ is not a pole and moreover the
$\zeta$--regularized determinant is given by
\begin{equation}\label{INTRO.eq.4}
  \detz(H_0(R_a,R_b)) = 2\cdot p\cdot W(\psi,\varphi) = 2\cdot p\cdot (\psi\cdot
   \varphi'-\psi'\cdot\varphi).
\end{equation}
Note that the Wronskian $p\cdot W(\psi,\varphi)$ is constant.
\end{theorem}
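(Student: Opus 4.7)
My plan is to reduce the assertion to the special case $p\equiv 1$ which is worked out in detail in \cite{Les:DRS}. The reduction is effected by the Liouville transformation already alluded to after \Eqref{INTRO.eq.3}: with
\[
 y(x) := \int_a^x p(x')^{-1/2}\,dx', \qquad L := y(b),
\]
the map $U:L^2([a,b],dx) \to L^2([0,L],dy)$ defined by $(Uf)(y) := p(x(y))^{1/4} f(x(y))$ is unitary and conjugates $H_0$ into an operator of the form $\tilde H_0 = -\partial_y^2 + \tilde V(y)$ for an explicit smooth potential $\tilde V$ built from $V$, $p$ and its derivatives. Since the $\zeta$-regularized determinant is invariant under unitary conjugation, the formula for $H_0$ will follow once it is established for $\tilde H_0$ and the right-hand side of \Eqref{INTRO.eq.4} is shown to transform correctly.

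The next step is to transfer the boundary data. Using $\partial_y = p^{1/2}\partial_x$, a short calculation shows that separated boundary conditions $R_a,R_b$ of the form \Eqref{INTRO.eq.2} are transformed into separated boundary conditions $\tilde R_0,\tilde R_L$ of the same type, with Dirichlet preserved and a generalized Neumann angle merely shifted by a contribution involving $p'(c)/p(c)$ at the respective endpoint. The crucial point is that the prefactors $p(c)^{-3/4}$ and $p(c)^{-1/4}$ appearing in \Eqref{INTRO.eq.3} are tuned precisely so that $\tilde\varphi := U\varphi$ and $\tilde\psi := U\psi$ satisfy $\tilde\varphi'(0)=1$ or $\tilde\varphi(0)=1$ (and analogously at $L$ with the $\operatorname{sgn}$-convention), i.e.\ they are normalized in the sense of \cite{Les:DRS}. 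Writing out $\tilde W(\tilde\psi,\tilde\varphi) = \tilde\psi\,\partial_y\tilde\varphi - \partial_y\tilde\psi\cdot\tilde\varphi$, the mixed terms involving $p'$ cancel and one finds
\[
 \tilde W(\tilde\psi,\tilde\varphi) \;=\; p\cdot W(\psi,\varphi),
\]
which is of course consistent with constancy of the Wronskian on each side.

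Invoking the $p\equiv 1$ result of \cite{Les:DRS} for $\tilde H_0$ then yields $\detz(\tilde H_0) = 2\,\tilde W(\tilde\psi,\tilde\varphi) = 2\,p\cdot W(\psi,\varphi)$, and unitary invariance delivers \Eqref{INTRO.eq.4}. Self-adjointness and discreteness of $H_0(R_a,R_b)$, as well as the meromorphic continuation of its $\zeta$-function and regularity at $s=0$, are standard facts for regular Sturm--Liouville problems and transfer across $U$ from the corresponding statements for $\tilde H_0$. The main potential pitfall, and the step I would be most careful with, is the bookkeeping in the second paragraph: both the normalizing constants in \Eqref{INTRO.eq.3} and the transformed boundary angles must match the conventions of \cite{Les:DRS} on the nose, since any overlooked multiplicative factor would enter the final formula linearly and spoil the clean prefactor $2$.
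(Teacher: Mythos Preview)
The paper does not prove Theorem~\ref{T.BFK}; it is quoted from \cite{BFK:DEB} as background. Your reduction via the Liouville transformation is correct and is precisely the argument the paper alludes to in the paragraph following \Eqref{INTRO.eq.3}, where it is noted that the normalization \Eqref{INTRO.eq.3} is designed so that under $y(x)=\int_a^x p(x')^{-1/2}\,dx'$ and $U f = p^{1/4}f$ the problem becomes $-\partial_y^2+\tilde V$ with the standard normalization of \cite{Les:DRS}. Your verifications that $U$ is unitary, that $\tilde\varphi'(0)=1$ resp.\ $\tilde\varphi(0)=1$ hold under \Eqref{INTRO.eq.3}, and that $\tilde W(\tilde\psi,\tilde\varphi)=p\cdot W(\psi,\varphi)$ after cancellation of the $p'$ terms are all correct, so the reduction goes through.
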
   

This result has been generalized to regular singular operators by the
second and third named authors, \cf \cite{Les:DRS, LesVer}. 

In this paper we prove the analogue of BFK's Theorem for operators of
the form \Eqref{eq.INTRO.cusp.op}. To the best of our knowledge this is the
first example of a singular operator on an unbounded interval for
which such a formula for the $\zeta$--determinant is proven. For a function $f$ we will
use the corresponding capital letter $F$ to denote the multiplication operator by $f$.
\textit{E.~g.}, the multiplication operator by the coordinate function $x$ is denoted by $X$.

\begin{theorem}\label{T.main}
Fix any $\nu\geq0$ and suppose the potential $V$ in the differential expression
\Eqref{eq.INTRO.cusp.op} satisfies $V \in X^\gamma  L^1[a,\infty)$ for a fixed
$\gamma < 2$. We impose boundary conditions at $x=a$ of the form \Eqref{INTRO.eq.2}. 
The operator $H$ is in the limit point case at infinity and hence essentially
self-adjoint on the core domain
$\bigsetdef{ f\in C^\infty_0[a,\infty)}{ R_af = 0}$.
By abuse of notation we denote by $H = H(R_a)$ this self-adjoint realization.  Choose a fundamental system of 
solutions $\phi, \psi$ to $(H + \nu^2 )f=0$, 
where $R_a \phi = 0$ satisfies the boundary conditions at the left
end point and $\psi \in L^2 [a,\infty) $ is square integrable.
We normalize $\phi$ as above in \Eqref{INTRO.eq.3} and $\psi$ by
\begin{equation}
\lim\limits_{x\to \infty}  \psi(x) \cdot \sqrt{x}  \cdot K_\nu(\mu x)^{-1} = 1.
\end{equation}
Here $K_\nu$ is the modified Bessel function of the second kind of order $\nu$. 

Then $H(R_a)+\nu^2$ is self-adjoint with a discrete spectrum. Furthermore, its
$\zeta$--function admits a meromorphic continuation into a half plane
$\bigsetdef{z\in\C}{\Re z > r}$ for some $r<0$ with $0$ being a
regular point. Therefore, its zeta-regularized determinant is
well--defined. 

Furthermore, we have the explicit formula
\begin{equation}
  \detz (H(R_a)+\nu^2) = \sqrt{\frac 2\pi } \cdot a^2\cdot W(\psi, \phi)(a).
\end{equation}
\end{theorem}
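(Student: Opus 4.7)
The plan is to approximate $H$ on $[a,\infty)$ by its restrictions to finite intervals $[a,b]$, apply Theorem~\ref{T.BFK}, and pass carefully to the limit $b\to\infty$ using the explicit Bessel-function structure of the model operator $D_\mu$ at infinity.

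First I would establish the qualitative statements about self-adjointness and spectrum. The Liouville-type unitary $Uf(t)=e^{t/2}f(e^t)$ maps $L^2([a,\infty),dx)$ to $L^2([\log a,\infty),dt)$ and transforms $D_\mu$ into a Schr\"odinger operator with confining potential of order $e^{2t}$; Weyl's limit point criterion at infinity then yields essential self-adjointness on the stated core, and compactness of the resolvent together with discreteness of the spectrum follow from the standard theory of such confining potentials. The perturbation $V\in X^\gamma L^1[a,\infty)$ with $\gamma<2$ is relatively small and does not affect these conclusions. For the fundamental system of solutions to $(H+\nu^2)f=0$, the substitution $f(x)=g(x)/\sqrt{x}$ followed by $g(x)=y(\mu x)$ reduces the model equation $(D_\mu+\nu^2)f=0$ to the modified Bessel equation of order $\nu$, so that its unique $L^2$ solution is a multiple of $K_\nu(\mu x)/\sqrt{x}$; the existence of $\psi$ with the stated normalization for the perturbed equation then follows from a Jost-type Volterra integral equation.

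For each $b>a$, let $H_b$ denote the self-adjoint restriction of $H$ to $[a,b]$ with Dirichlet boundary at $b$. Theorem~\ref{T.BFK} with $p(x)=x^2$ gives $\detz(H_b+\nu^2)=2\cdot a^2\cdot W(\psi_b,\phi)(a)$, where $\phi$ is the left-endpoint normalized solution of the theorem (independent of $b$) and $\psi_b$ satisfies $(H+\nu^2)\psi_b=0$ with $\psi_b(b)=0$ and $\psi_b'(b)=-b^{-3/2}$. Decomposing $\psi_b=A_b\,\psi+B_b\,\tilde\psi$ with $\tilde\psi$ a second solution of $I_\nu$-type growth, and using the constant Wronskian identity $x^2 W(\psi,\tilde\psi)\equiv\mathrm{const}$ together with the classical asymptotics $K_\nu(z)\sim\sqrt{\pi/(2z)}\,e^{-z}$ and $I_\nu(z)\sim(2\pi z)^{-1/2}\,e^{z}$, one sees that $B_b\to 0$ super-exponentially while $A_b$ blows up exponentially; in particular $W(\psi_b,\phi)(a)\sim A_b\,W(\psi,\phi)(a)$ as $b\to\infty$, an explicit relation that can be computed in closed form from the Bessel asymptotics.

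The main technical obstacle is that $\detz(H_b+\nu^2)$ itself diverges as $b\to\infty$; what converges is a suitably renormalised ratio. To control this I would introduce a relative zeta function of the form $\zeta_{\mathrm{rel}}(s)=\operatorname{Tr}((H+\nu^2)^{-s}-(H_b+\nu^2)^{-s})$, interpreting $(H_b+\nu^2)^{-s}$ as extended by zero to $L^2[a,\infty)$, and analyse its Mellin representation through the relative heat trace. Near $x=b$ the heat kernel of $H$ is governed by that of the model Bessel operator $D_\mu$ on $[b,\infty)$, whose diagonal is a known bilinear expression in $I_\nu,K_\nu$; this should yield meromorphic continuation of $\zeta_{\mathrm{rel}}$, regularity at $s=0$, and an explicit prefactor $c(b)$ satisfying $\detz(H+\nu^2)=c(b)\cdot\detz(H_b+\nu^2)$. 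The advertised factor $\sqrt{2/\pi}$ then emerges from combining the limit $c(b)\cdot A_b\to\mathrm{const}$ with the explicit Bessel-function constants. Finally, the extension from $V=0$ to general $V\in X^\gamma L^1[a,\infty)$ with $\gamma<2$ is handled by treating $V$ as a relatively compact perturbation of $D_\mu$: the asymptotic analysis and the Jost construction are stable under such perturbations.
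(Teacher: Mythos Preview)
Your strategy is genuinely different from the paper's, and the crucial step is not established. The paper never truncates to finite intervals or passes to a limit $b\to\infty$. Instead it proves a \emph{variation formula}: for a one-parameter family $H_t=D_\mu+X^{2-\eps}W_t$ one has $\frac{d}{dt}\log\detz(H_t+\nu^2)=\frac{d}{dt}\log\bigl(x^2 W(\psi_t,\phi_t)\bigr)$, so the ratio $\detz(H+\nu^2)/\bigl(a^2 W(\psi,\phi)\bigr)$ is independent of the potential $V$. This reduces the problem to computing the ratio for the model operator $D_\mu$ alone. That in turn is done by combining the general fact $\LIM_{z\to\infty}\log\detz(D_\mu+z^2)=0$ (a consequence of the resolvent trace expansion) with an explicit evaluation of $\LIM_{z\to\infty}\log K_z(\mu a)=\log\sqrt{\pi/2}$ via the \emph{large-order} asymptotics of $K_z$. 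So the constant $\sqrt{2/\pi}$ comes from the $z\to\infty$ regime, not a $b\to\infty$ limit.

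Your plan, by contrast, compares $H$ on $[a,\infty)$ with $H_b$ on $[a,b]$, two operators on different Hilbert spaces. You correctly identify that $\detz(H_b+\nu^2)$ diverges and that a relative determinant is needed, but then you simply assert that the relative heat trace ``should yield'' meromorphic continuation, regularity at $s=0$, and an explicit $c(b)$ with $c(b)\cdot A_b\to\text{const}$. That is exactly where all the work lies, and nothing in your outline explains how to carry it out: you would need short-time heat asymptotics near the moving Dirichlet boundary at $b$ \emph{and} control of the large-time behaviour uniformly in $b$, plus a mechanism for the contribution at $b$ to cancel against the growth of $A_b$ to produce precisely $\sqrt{2/\pi}$ rather than the factor $2$ in Theorem~\ref{T.BFK}. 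None of this is routine on an unbounded interval with a quadratically growing potential. The paper sidesteps the entire issue by varying the potential rather than the domain, which keeps all operators on the same Hilbert space and reduces the determination of the constant to a single regularized limit of Bessel functions.
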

Note that when comparing with \Eqref{INTRO.eq.4} we have $p(x) = x^2$. 

\subsection{Outline of proof and further results}

\subsubsection{The model operator}
The operator $H$ is treated as a perturbation of the model
\emph{cusp} operator parametrized by $\mu$
\begin{equation}\label{eq.INTRO.model}
D_\mu = -\frac{d}{dx}\Bigl( x^2 \frac{d}{dx}\cdot \Bigr) + x^2\mu^2 - \frac{1}{4}.
\end{equation}
We observe that if $\mu = 0$ then $D_0$ has a continuous spectrum, therefore we consider 
$\mu>0$.
A fundamental system of solutions to the differential equation
$(D_\mu + z^2 ) f = 0$ is explicitly given in terms of the 
modified Bessel functions $I_z, K_z$ by 
\begin{equation}
    x^{-1/2}\cdot K_z( \mu x), \quad x^{-1/2} \cdot I_z( \mu x).
\end{equation}    
Spectral problems and the analysis of the resolvent therefore
ultimately reduce to questions about the modified Bessel functions
and their asymptotic behavior. Besides the fairly standard
asymptotics for large arguments and fixed order resp. large order
and fixed arguments we will also need less standard \emph{uniform}
asymptotics for large order. The necessary facts about Bessel
functions are compiled in Section \ref{s.bessel}.

\subsubsection{The resolvent expansion, and meromorphic continuation of the
zeta-function}

The first step is to analyze the asymptotic expansion of the resolvent trace. 

\begin{theorem}\label{T.INTRO.Resolvent} Let $V\in X^\gamma L^1[a,\infty)$ as
in Theorem \ref{T.main} and suppose that a fixed (Dirichlet or generalized
Neumann) boundary condition for $H$ at $a$ is given. Then the resolvent
$(H(R_a) + z^2)\ii$ is trace class and there is an asymptotic expansion
\begin{equation}\label{eq.INTRO.resolvent.1}
  \Tr( H(R_a) + z^2)\ii = b_0 \cdot z\ii \cdot \log z + a_0\cdot z\ii 
  + a_1 \cdot z^{-2} + O(z^{-2-\delta}), \quad\text{ as } z\to\infty
\end{equation}
for some $\delta>0$. The constants $b_0, a_0, a_1$ do not depend on the
potential and not on $z$. Explicitly, $a_0 = \frac 12 \log \frac{2}{\mu a}$,
$b_0 = \frac 12$; for Dirichlet boundary conditions at $a$ we have 
$a_1 = \frac 14$ while for generalized Neumann conditions we have
$a_1 = -\frac 14$.

For the model operator $D_\mu$ ($V = 0$) there is a full asymptotic expansion
\begin{equation}\label{eq.INTRO.resolvent.model}
\Tr (D_\mu + z^2)^{-1} = \sum_{k=0}^{\infty} a_k(\mu) \cdot z^{-1-k}
+ \sum_{k=0}^{\infty} b_{2k}(\mu) \cdot z^{-1-2k}\cdot \log z,
\quad \text{ as } z\to\infty.
\end{equation} 
\end{theorem}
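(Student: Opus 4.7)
The plan is to treat $H = D_\mu + V$ as a perturbation of the model operator $D_\mu$ and to compute the model resolvent trace explicitly via Bessel-function fundamental solutions. For $D_\mu$ with Dirichlet boundary condition at $a$, a fundamental system of $(D_\mu + z^2) g = 0$ is
\begin{equation*}
 \phi_z(x) = x^{-1/2}\bigl[K_z(\mu a) I_z(\mu x) - I_z(\mu a) K_z(\mu x)\bigr],\quad
 \psi_z(x) = x^{-1/2} K_z(\mu x),
\end{equation*}
with $\phi_z(a)=0$ and $\psi_z \in L^2$ at infinity; the weighted Wronskian $x^2 W(\psi_z,\phi_z)$ is constant and explicitly computable from the standard $I_z, K_z$ Wronskian. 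Integrating the diagonal of the resulting resolvent kernel yields
\begin{equation*}
 \Tr (D_\mu + z^2)\ii = \frac{1}{W}\int_a^\infty \frac{1}{x^3}\bigl[K_z(\mu a) I_z(\mu x) K_z(\mu x) - I_z(\mu a) K_z(\mu x)^2\bigr]\, dx.
\end{equation*}
The generalized Neumann case is obtained by replacing $\phi_z$ with the appropriate linear combination satisfying $R_a\phi_z = 0$. In either case trace-class follows from the decay $K_z(\mu x) \sim \sqrt{\pi/(2\mu x)}\,e^{-\mu x}$ at infinity.

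\textbf{Model expansion via uniform Bessel asymptotics.} The key analytic input is the Debye-type uniform asymptotic expansion of $K_z, I_z$ as $z\to\infty$ collected in Section \ref{s.bessel}. Substituting these into the integral above and performing a large-$z$ Laplace-type analysis produces the full asymptotic expansion \Eqref{eq.INTRO.resolvent.model}. The logarithmic terms arise from the critical region $\mu x \sim z$ where the uniform Bessel expansion crosses its turning point; in particular, $b_0 = \tfrac12$ comes from a Nicholson-type evaluation of $\int K_z(\mu x)^2\, x\ii\, dx$, while $a_0 = \tfrac12 \log(2/(\mu a))$ is extracted from the leading-order behaviour $I_z(\mu a) K_z(\mu a) \sim 1/(2z)$ together with the explicit asymptotic of the $\log$-shifted Bessel product. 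The coefficient $a_1 = \pm\tfrac14$ is obtained from the subleading correction at the endpoint $x = a$, with the sign flipping between Dirichlet and generalized Neumann because the normalization of $\phi_z$ differs by a factor involving $p(a)^{1/2}$ and a derivative.

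\textbf{Perturbation by $V$ and main obstacle.} For the full operator one iterates the resolvent identity $(H+z^2)\ii = (D_\mu + z^2)\ii - (D_\mu+z^2)\ii V (H + z^2)\ii$. The trace of the first-order correction reduces, by cyclicity, to an integral of $V(x)$ against the diagonal of $(D_\mu + z^2)^{-2}$, which by the model analysis decays like $z^{-2-\varepsilon}$ uniformly in $x \geq a$ with an additional $x$-integrable weight. The assumption $V \in X^\gamma L^1[a,\infty)$ with $\gamma < 2$ then suffices to make this term absolutely convergent and of order $O(z^{-2-\delta})$ for some $\delta > 0$; higher Neumann-type iterates contribute at strictly smaller order. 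The main obstacle is the careful bookkeeping of the uniform Debye expansion needed to extract the universal constants $a_0, a_1, b_0$ with correct signs and to identify precisely where the $z\ii\log z$ term originates, while the perturbative step is relatively routine once the pointwise diagonal bounds on the model resolvent are in hand.
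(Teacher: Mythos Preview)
Your strategy is the paper's strategy: compute the model resolvent trace from the explicit Bessel-function Green kernel and the uniform Debye expansions (this is Proposition~\ref{Asympt.Dt}, with the full expansion deferred to \cite{Ver}), and then control the perturbation by a Neumann series in trace norm (this is Theorem~\ref{Theo.Pert.Trace}, built on Lemmas~\ref{L.regdet.1}--\ref{lemma2}).

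Two points to correct. First, your displayed trace formula has the wrong power of $x$: since $\phi_z(x)\psi_z(x)$ already carries a factor $x^{-1}$ and $p(x)W(\psi_z,\phi_z)=K_z(\mu a)$ is constant, the diagonal kernel is $x^{-1}\bigl[I_z(\mu x)K_z(\mu x)-\tfrac{I_z(\mu a)}{K_z(\mu a)}K_z(\mu x)^2\bigr]$, not $x^{-3}[\cdots]$; cf.\ \Eqref{resolvent-kernel}. Second, your attribution of the coefficients is off: the logarithmic term $b_0 z^{-1}\log z$ and the constant $a_0$ come from the $I_zK_z$ product term via \Eqref{uniform1}, since after the substitution $y=\mu x/z$ one gets $\tfrac{1}{2z}\int_{\mu a/z}^\infty \tfrac{dy}{y\sqrt{1+y^2}}$, whose lower-endpoint divergence produces the $\log z$. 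The $K_z^2$ term is a boundary contribution concentrated near $x=a$ and feeds into $a_1$, not $b_0$.

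Finally, the paper organises the Neumann-series estimate more carefully than your cyclicity sketch: it factors symmetrically as $(D_\mu+z^2)^{-1/2}\bigl[(D_\mu+z^2)^{-1/2}X^{1-\eps/2}WX^{1-\eps/2}(D_\mu+z^2)^{-1/2}\bigr]^n(D_\mu+z^2)^{-1/2}$ and bounds each bracket in trace norm via the Hilbert--Schmidt estimate of Lemma~\ref{lemma2}, which only needs the diagonal of the \emph{first} resolvent, not of $(D_\mu+z^2)^{-2}$. This avoids the unjustified cyclicity step and gives the clean $O(z^{-2-\min(1,\eps)})$ bound directly.
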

The full asymptotic expansion for the model operator is due to
the third named author \cite[Sec.~4]{Ver}, however without explicitly
specifying the first few coefficients.

The symbol $\regint$ will denote the Hadamard partie finie integral
which we will briefly review in Section \ref{ss.rli}.
The well-known formula, \cf \cite[(2.30)]{LesTol:DOD},
\begin{equation}\label{eq.INTRO.ZetaFunction}
     \zeta_H(s):=\sum_{\gl\in\spec H} \gl^{-s}= \frac{\sin \pi s}{\pi} 
     \cdot \regint_0^\infty x^{-s} \cdot \Tr(H+x)\ii dx
\end{equation}
relates the zeta-function of $H=H(R_a)$ to the resolvent trace and the
asymptotic expansion \Eqref{eq.INTRO.resolvent.1} implies that
$\zeta_H(s)$ has a meromorphic continuation to $\Re s > -\delta$
with $0$ being a regular point. Therefore one has
\begin{equation}\label{eq.INTRO.ZetaPrimeZero}
\log\detz H:=-\zeta_H'(0)=- \regint_0^\infty \Tr(H+z)\ii dz = -2
\regint_0^\infty z \cdot \Tr(H+z^2)\ii dz.
\end{equation}

A consequence of the expansion \Eqref{eq.INTRO.resolvent.1} is that as $z\to \infty$
\begin{equation}\label{eq.INTRO.LIMDet}
  \log\detz(H+z^2) = 2 \cdot b_0 \cdot z\cdot \log z
       + 2 \cdot (a_0 - b_0 ) \cdot z + 2\cdot a_1\cdot\log z + O(z^{-\delta}
       \log z).
\end{equation}
Note that there is no constant term, thus 
$\LIM\limits_{z\to\infty} \log\detz(H+z^2) = 0$
(Lemma \ref{L.gen.asymptotic-det}). $\LIM$ (regularized limit)
is a short hand for the constant term in the asymptotic expansion,
\cf Sec. \ref{ss.rli}.

\subsubsection{Weyl eigenvalue asymptotics } 
In this subsection we discuss the asymptotic behavior of the 
eigenvalue counting function $N(\lambda)$ for the cusp operator $H$.

Note that the presence of $z\ii\log z$ as the leading term in the resolvent trace 
asymptotics
in Theorem \ref{T.INTRO.Resolvent} above, distinguishes our case significantly from 
similar discussions of regular-singular Sturm--Liouville operators over a finite interval 
in \cite{Les:DRS} and \cite{LesVer}. There the singular potential in 
the Sturm--Liouville operator leads to logarithms in the resolvent trace 
asymptotics as well, however in contrast to our case, the logarithm does not 
appear in the leading term. 

The logarithmic leading term $z\ii\log z$ is obviously
a new phenomenon of our non-compact setting and has an important consequence 
for the Weyl asymptotics of the cusp operator $H(R_a)$. Indeed, by the resolvent 
trace expansion, one concludes that 
\begin{equation}
\zeta_H(s) - \frac{1}{\Gamma(s)} \left(\frac{-c_0}{\left(s-\frac{1}{2}\right)} + 
\frac{c_1}{\left(s-\frac{1}{2}\right)^2}\right) 
\end{equation}
is continuous for $\Re(s) \geq \frac{1}{2}$, where the constants $c_0$
and $c_1$ are determined explicitly by the coefficients in the resolvent 
trace asymptotics and in particular $c_1 = - \frac{b_0}{2 \Gamma\left(\frac{1}{2}\right)}$.
Now, by a Tauberian argument, \cf Shubin \cite[Problem 14.1 pp. 127]{Shu01} and 
Aramaki \cite{Ara83}, one concludes for the eigenvalue counting function 
\begin{equation}
N(\lambda) \sim \frac{\sqrt{\lambda} \log (\lambda)}{2\Gamma\left(1/ 2 \right)^2}, 
\quad \lambda \to \infty.
\end{equation}
Note that it is by no means straightforward to conclude a similar expansion 
for the eigenvalue counting function of the Laplace--Beltrami operator on cusps, 
which can be written as an infinite  direct sum of the cusp operators $H$. This is due to the 
non-uniform behaviour of the resolvent trace expansion in Theorem \ref{T.INTRO.Resolvent} as 
$\mu$ goes to infinity. A similar question has been studied in the joint work of the
second and third author \cite{LesVer2015}.

\subsubsection{Variation formula} The next step is to establish a variation
formula. Let us state it informally first: let $V_t$ be a (sufficiently nice)
one parameter family of potentials (satisfying the overall assumptions of
Theorem \ref{T.main} and depending differentiably on $t$) and denote by
$\phi_t,\psi_t$ be a normalized fundamental system of solutions to the
differential equation 
$H_t f =  D_\mu f + V_t f = 0$. Then
\begin{equation}\label{eq.INTRO.variation}
  \pl_t \log\detz H_t = \pl_t \log \bl p\cdot  W(\psi_t, \phi_t ) \br.
\end{equation}
This variation formula goes back to Levit and Smilansky
\cite{LevSmi1977} for the situation of Theorem \ref{T.BFK}.
For $V_t = t^2$ being the resolvent parameter of the model
operator it is due to the third named author \cite[Sec.~5]{Ver}. 
In Section \ref{s.variation-model} we will present 
an expanded version which includes some important details.
For the general case we investigate the dependence of the asymptotic behavior of a fundamental
system of solutions at infinity on the parameter $t$ and we prove a B\^ocher Theorem for $H+\nu^2$ in
Section \ref{Bocher}. Finally, we analyze the asymptotic expansion of the resolvent trace of the perturbed 
operator and prove the Theorem \ref{T.INTRO.Resolvent} in Section \ref{s.regdet_perturbed} and 
prove the final result in Section \ref{s.regdet}.
 
\mpar{TODO: write more what needs to be done here,
e.g. Bocher Thm and limit of $p W(\psi,\dot\phi)$ ...}

We apply \Eqref{eq.INTRO.variation} to $V_z = V +z^2$
and obtain in view of \Eqref{eq.INTRO.LIMDet}
\begin{equation}
  \detz (H  +z_0^2)= p\cdot W(\psi_{z_0},\phi_{z_0}) \cdot
     \exp\Bl -\LIM_{z\to\infty} \log\bl p\cdot W(\psi_z, \phi_z) \br\Br.
\end{equation}     
For general one dimensional elliptic differential operators on
a finite interval this formula was established in \cite[Thm.~3.3]{LesTol:DOD}.

It remains to compute the constant $\LIM\limits_{z\to\infty} \log \bl p\cdot W(\psi_z,
\phi_z)\br$.  The variation formula \Eqref{eq.INTRO.variation} shows that this
constant is independent of the potential $V$.  Therefore, it suffices to
compute it for the model operator.  In \cite{BFK:DEB} and \cite{Les:DRS} this
is done by proving another formula for the variation of generalized Neumann
conditions and then finally by computing explicit examples. Namely, on a
finite interval the $\zeta$--determinant for $-\frac{d^2}{dx^2}$ (with
Dirichlet or Neumann boundary conditions) can explicitly be expressed in terms
of the Riemann $\zeta$--function for which the derivative at $0$ is known
(Lerch's formula).  The case of a regular singular operator on a finite
interval can also be reduced to this case; alternatively one can take
advantage of the fact that the spectrum of the Jacobi differential operator is
explicitly known \cite{Les:DRS}.

For our model operator $D_\mu$ we need to employ a different strategy as we do
not know the spectrum of any self-adjoint
realization of \Eqref{eq.INTRO.cusp.op} for any parameter value $\mu$. However, for each 
boundary condition
the normalized fundamental system $\phi_z,\psi_z$ can explicitly be expressed
in terms of the modified Bessel functions. Consequently, the asymptotic behavior
of $\log \bl p\cdot W(\psi_z,\phi_z) \br$ can be studied with the help of the
known asymptotics of the modified Bessel functions.

\subsection*{Acknowledgements} The first author was partially supported by
grant FAPESP 2015/01923-0 and is grateful to Bonn University for hospitality.
The third author thanks Werner M\"uller for valuable discussions on the
geometry of manifolds with cusps and is grateful to Bonn and M\"unster
University for hospitality.  All authors gratefully acknowledge the support of
the Hausdorff Center for Mathematics.

\section{Generalities: Regularized limits, $\zeta$--determinants, \\ and
Wronskians of Sturm--Liouville operators}
\label{s.generalities}

For the convenience of the reader and to fix some notation we collect
here some general facts on regularized limits, zeta-determinants and
Wronskians of Sturm--Liouville operators, \cf also \cite{Les:DRS},
\cite{LesTol:DOD}, \cite[Sec.~1]{LesVer}, \cite[Sec.~1]{LesVer2015}
and the references therein.

\newcommand{\ddx}{\frac{d}{dx}}
\subsection{Regularized limits and integrals} \label{ss.rli}
Let $f:(0,\infty)\to \C$ be a function with a (partial) asymptotic expansion 
\begin{equation}\label{eq.gen.reg-limit}
f(x) \sim \sum_{j=1}^{N-1} \sum_{k=0}^{M_j} a_{jk}x^{\ga_j} \log^k(x) +
             \sum_{k=0}^{M_0} a_{0k} \log^k(x) +  f_N(x), \ \quad \ x\ge x_0>0,
\end{equation}
where $\ga_j\in\C$ are ordered with decreasing real part
and the remainder $f_N(x)=o(1)$ (Landau notation) as $x\to \infty$. 
Then we define its \emph{regularized limit} as $x\to \infty$ by
\begin{equation}
\LIM_{x\to \infty}f(x) :=a_{00}.
\end{equation}
If $f$ has an expansion of the form \Eqref{eq.gen.reg-limit} as $x\to 0$ then
the regularized limit as $x\to 0$ is defined accordingly. 

If $f$ is locally integrable and the remainder $f_N\in L^1[1,\infty)$ even 
integrable, the integral $\int_1^R f(x)dx$ also admits an asymptotic 
expansion of the form \Eqref{eq.gen.reg-limit} and one defines 
the \emph{regularized integral}  as 
\begin{equation}\label{EqRegInt1}
\regint_1^\infty f(x)dx := \LIM_{R\to \infty}\int_1^R f(x) dx.
\end{equation}
Similarly, 
$\regint_0^1 f(x)dx := \LIM\limits_{\eps\to 0}\int_\eps^1 f(x) dx$,
if this regularized limit exists. 

$\regint$ is a linear functional extending the ordinary integral.
However, it has some pathologies. \textit{E.~g}., the formula for changing variables 
$x\mapsto \gl\cdot x$ in the integral has correction terms,
\cite[Lemma 1.1]{LesVer2015}. Relevant for us will be the behavior
under translations. Namely, assuming that $f$ is locally integrable
with remainder $f_N\in L^1[1,\infty)$, consider for $x>0$
\begin{equation}
  \begin{split}
    \regint_0^\infty f(x+t) dt & = \LIM_{R\to\infty} \int_0^R f(x+t) dt\\
       & = \LIM_{R\to\infty}\Bl \int_x^R f(t) dt + \int_R^{R+x} f(t) dt\Br\\
       & = \regint_x^\infty f(t) dt + \LIM_{R\to\infty} \int_R^{R+x} f(t) dt.
  \end{split}
\end{equation}
In general $\LIM_{R\to\infty} \int_R^{R+x} f(t) dt \not=0$, \cf
the discussion after Lemma 2.2 in \cite{LesTol:DOD}.
However it vanishes whenever there are no terms of
the form $x^\ga \log^k x$ with $\ga\in \Z_+\subset \C \backslash \{0\}$ in the expansion
\Eqref{eq.gen.reg-limit}. For later reference we record

\begin{lemma} \label{L.gen.1}
Let $f:(0,\infty)\to\C$ be locally integrable with an asymptotic
expansion as in \Eqref{eq.gen.reg-limit} where $\ga_j\not\in\Z_+$
and $f_N\in L^1[1,\infty)$. Then for all $x>0$
\begin{equation} \label{eq.gen.reg-limit1}
  \regint_0^\infty f(x+t) dt = \regint_x^\infty f(t) dt,
\end{equation}
in particular $\LIM\limits_{x\to\infty} \regint_0^\infty f(x+t) dt = 0$.
\end{lemma}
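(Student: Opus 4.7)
The plan is to build on the splitting identity derived just above the lemma,
\[
\regint_0^\infty f(x+t)\,dt = \regint_x^\infty f(t)\,dt + \LIM_{R\to\infty}\int_R^{R+x} f(t)\,dt,
\]
which comes from the substitution $s=x+t$ and the split $\int_x^{R+x}=\int_x^R+\int_R^{R+x}$. The entire task therefore reduces to proving that the boundary term $\LIM_{R\to\infty}\int_R^{R+x} f(t)\,dt$ vanishes under the hypothesis $\ga_j\notin\Z_+$.

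To accomplish this I would decompose $f$ using \eqref{eq.gen.reg-limit} into a finite sum of model terms $a_{jk}t^{\ga_j}\log^k t$, the pure-log terms $a_{0k}\log^k t$, and the $L^1$-remainder $f_N$. The remainder contributes nothing: $\int_R^{R+x} f_N(t)\,dt\to 0$ as $R\to\infty$ by absolute continuity of the Lebesgue integral. For each model term one changes variables $t=R+u$ with $u\in[0,x]$ and Taylor-expands $(R+u)^{\ga_j}=R^{\ga_j}(1+u/R)^{\ga_j}$ and $\log(R+u)=\log R+\log(1+u/R)$ in powers of $u/R$. Integrating $u$ over $[0,x]$ produces an asymptotic expansion of $\int_R^{R+x} t^{\ga_j}\log^k t\,dt$ in which every term has the form $R^{\ga_j-m}\log^{k'}R$ (with $m\ge 0$, $0\le k'\le k$), led by $xR^{\ga_j}\log^k R$. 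The coefficient of $R^0\log^0 R$ — the only piece surviving $\LIM_{R\to\infty}$ — arises precisely when $\ga_j-m=0$ for some $m\ge 0$, and the hypothesis $\ga_j\notin\Z_+$ (together with $\ga_j\neq 0$, built into the structure of the first sum in \eqref{eq.gen.reg-limit}) excludes this. An analogous check on the pure logs $\log^k t$ with $k\ge 1$ shows the expansion of $\int_R^{R+x}\log^k t\,dt$ begins with $x\log^k R$ and carries no constant-in-$R$ term.

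For the \emph{in particular} clause, apply $\LIM_{x\to\infty}$ to both sides of \eqref{eq.gen.reg-limit1}; it suffices to verify that $\LIM_{x\to\infty}\regint_x^\infty f(t)\,dt=0$. The same term-by-term machinery delivers this: $\regint_x^\infty t^{\ga_j}\log^k t\,dt$ equals $x^{\ga_j+1}$ times a polynomial in $\log x$ (treating $\ga_j=-1$ as a separate logarithmic case), while $\regint_x^\infty\log^k t\,dt$ equals $-x$ times a polynomial in $\log x$; in every instance the exponent on $x$ is nonzero, so no $x^0\log^0 x$ term appears in the expansion at infinity. The only real subtlety — and the main place requiring care — is justifying that term-by-term integration of the expansion of $f$ yields a legitimate expansion of the same structural form \eqref{eq.gen.reg-limit} for $\int_R^{R+x} f(t)\,dt$, so that $\LIM$ is applied to a function genuinely possessing such an expansion. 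This rests on the uniformity of the remainder estimate for $f$ together with $L^1$-control on $f_N$, and is routine bookkeeping.
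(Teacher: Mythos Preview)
Your argument for the main identity \eqref{eq.gen.reg-limit1} is correct and follows the same line as the paper: both rest on the splitting displayed just above the lemma, and the task is to show the boundary term $\LIM_{R\to\infty}\int_R^{R+x}f(t)\,dt$ vanishes. The paper simply asserts this (referring to \cite{LesTol:DOD}), whereas you supply the term-by-term justification via the binomial/log expansion of $(R+u)^{\ga_j}\log^k(R+u)$. That is fine and in fact more self-contained than what the paper writes. One small remark: your treatment of the ``pure log'' block should also cover $k=0$, i.e.\ a constant term $a_{00}$ in $f$; for that term $\int_R^{R+x}a_{00}\,dt=a_{00}x$ is itself a constant in $R$, so strictly speaking the boundary term contributes $a_{00}x$. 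This is a wrinkle in the lemma's statement rather than in your method, and is harmless in every application in the paper (where $f$ is a resolvent trace with no constant term).

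Where your approach genuinely diverges from the paper is the ``in particular'' clause. You attack $\LIM_{x\to\infty}\regint_x^\infty f(t)\,dt$ by another term-by-term computation, expanding $\regint_x^\infty t^{\ga_j}\log^k t\,dt$ in powers of $x$ and $\log x$. The paper instead observes, directly from the definition of the regularized integral, that
\[
\regint_x^\infty f(t)\,dt \;=\; \regint_1^\infty f(t)\,dt \;-\; \int_1^x f(t)\,dt,
\]
and hence
\[
\LIM_{x\to\infty}\regint_x^\infty f(t)\,dt
\;=\; \regint_1^\infty f(t)\,dt \;-\; \LIM_{x\to\infty}\int_1^x f(t)\,dt
\;=\; \regint_1^\infty f(t)\,dt \;-\; \regint_1^\infty f(t)\,dt \;=\; 0.
\]
This one-line argument needs no casework on the exponents $\ga_j$ at all, and is worth internalizing: the vanishing of this particular regularized limit is tautological from how $\regint$ is defined, not an analytic fact about the individual expansion terms. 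Your computation is correct but does more work than necessary here.
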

\begin{proof} The last claim is a consequence of
the identity \Eqref{eq.gen.reg-limit1}, as 
\begin{equation}\label{eq.gen.reg-limit2}
  \LIM_{x\to\infty} \regint_x^\infty f(t) dt =
  \regint_1^\infty f(t)dt - \LIM_{x\to\infty} \int_1^x f(t) dt = 0.
\end{equation}
The identity \Eqref{eq.gen.reg-limit2} follows from the very definition of the 
regularized integral,
regardless of the values of the exponents $\ga_j$.
\end{proof}

\subsection{Zeta-regularized determinants}
\label{ss.gen.ZetaDeterminant}
Let $H>0$ be a self-adjoint positive operator acting on some Hilbert space. We
assume that the resolvent of $H$ is trace class,  and that for $z\ge 0$ we
have
\begin{equation}\label{eq.gen.DetExp}
  \Tr( H+z)\ii= \sum_{-1-\delta < \Re\ga <0} z^\ga\cdot P_\ga(\log
  z)+O(z^{-1-\delta}),\quad\text{ as } z\to\infty,
\end{equation} 
with polynomials $P_\ga(t)\in\C[t]$, $P_\ga=0$ for all but finitely
many $\ga$. Moreover, we assume that $P_{-1}$ is of degree $0$, that is
there are no terms of the form $z^{-1}\cdot \log^k z$ with $k\ge 1$.
For $1<\Re s<2$ the \emph{zeta-function ($\zeta$--function)} of $H$ is given by, 
\cf \cite[(2.30)]{LesTol:DOD},  
\begin{equation}\label{eq.gen.ZetaFunction}
     \zeta_H(s):=\sum_{\gl\in\spec H} \gl^{-s}= \frac{\sin \pi s}{\pi} 
     \cdot \regint_0^\infty x^{-s} \cdot \Tr(H+x)\ii dx.
\end{equation}
From the asymptotic expansion \Eqref{eq.gen.DetExp} one deduces that
$\zeta_H(s)$ extends meromorphically to the half plane $\Re s > -\delta$,
\cite[Lemma 2.1]{LesTol:DOD}.  The identity \Eqref{eq.gen.ZetaFunction}
persists except for the poles of the function $s\mapsto \frac{\pi}{\sin \pi s}
\zeta_H(s)$. From the assumption that $\deg P_{-1} =0$ in
\Eqref{eq.gen.DetExp} it follows that $\zeta_H$ is regular at $s=0$ and one
puts
\begin{equation}\label{eq.gen.ZetaPrimeZero}
\log\detz H:=-\zeta_H'(0)=- \regint_0^\infty \Tr(H+z)\ii dz = -2
\regint_0^\infty z \cdot \Tr(H+z^2)\ii dz.
\end{equation}
$\detz H$ is called the \emph{zeta-determinant ($\zeta$-determinant)} or 
\emph{zeta-regularized 
determinant} of $H$. For
non-invertible $H$ one puts $\detz H=0$. With this setting the function
$z\mapsto \detz (H+z)$ is an entire holomorphic function with zeros exactly
at the eigenvalues of $-H$. The multiplicity of a zero $z$ equals the
algebraic multiplicity of the eigenvalue $z$.

If $P_{-1}$ is a higher order polynomial then $\zeta_H$ has poles
at $0$. One still could define $-\log\detz H$ to be the coefficient
of $s$ in the Laurent expansion about $0$ of $\zeta_H(s)$. However,
in this case the relation $\log\detz H = - \regint_0^\infty \Tr(H+z)\ii dz$
would not hold any more. 

\begin{lemma}\label{L.gen.asymptotic-det}
 Let $H$ be a bounded below self-adjoint operator in some Hilbert space.
Assume that the resolvent is trace class and that, as $z\to\infty$,
the expansion \Eqref{eq.gen.DetExp} holds.
Then, as $z\to\infty$ we have an asymptotic expansion
\begin{equation}
  \log\detz (H+z) = \sum_{-1-\delta < \Re \ga < 0} z^{\ga+1}\cdot
      Q_\ga(\log z) + O( z^{-\delta} )
\end{equation}
with polynomials $Q_\ga$ satisfying $Q_\ga' = -(\ga+1) Q_\ga + P_\ga$.
Moreover, $Q_{-1}(\log x) = P_{-1}(0) \cdot\log x$, in particular
$\LIM\limits_{z\to\infty} \log\detz(H+z) = 0$.
\end{lemma}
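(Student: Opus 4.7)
The plan is to reduce the statement to a direct computation by expressing $\log\detz(H+z)$ as a regularized integral of the resolvent trace from $z$ to $\infty$, and then integrating the given asymptotic expansion term by term.

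First I would apply the defining formula \Eqref{eq.gen.ZetaPrimeZero} to the shifted operator $H+z$, which (since $H+z$ inherits the same kind of resolvent trace expansion as $H$) yields
\begin{equation*}
  \log\detz(H+z) \;=\; -\regint_0^\infty \Tr\bl(H+z)+t\br\ii \, dt
  \;=\; -\regint_0^\infty \Tr(H+z+t)\ii \, dt.
\end{equation*}
Because every exponent $\ga$ appearing in \Eqref{eq.gen.DetExp} satisfies $\Re\ga<0$, in particular $\ga\not\in\Z_+$, Lemma \ref{L.gen.1} applies and gives
\begin{equation*}
  \log\detz(H+z) \;=\; -\regint_z^\infty \Tr(H+t)\ii\, dt.
\end{equation*}

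Second, I would substitute the expansion \Eqref{eq.gen.DetExp} into this integral and integrate term by term. The remainder $O(t^{-1-\gd})$ produces an honest convergent integral $\int_z^\infty O(t^{-1-\gd})\,dt = O(z^{-\gd})$. For a single building block $t^\ga P_\ga(\log t)$ with $\ga\neq -1$, the elementary identity
\begin{equation*}
  \frac{d}{dt}\bl t^{\ga+1} Q_\ga(\log t) \br = t^\ga \bl (\ga+1)Q_\ga(\log t)+Q_\ga'(\log t) \br
\end{equation*}
shows that the ODE $(\ga+1)Q_\ga+Q_\ga'=P_\ga$, equivalently $Q_\ga'=-(\ga+1)Q_\ga+P_\ga$, is uniquely solvable on polynomials (since the map $Q\mapsto (\ga+1)Q+Q'$ is invertible on $\C[t]$ when $\ga\neq -1$). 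The antiderivative $F_\ga(t):=t^{\ga+1}Q_\ga(\log t)$ has $\LIM_{t\to\infty} F_\ga(t)=0$, so
\begin{equation*}
   \regint_z^\infty t^\ga P_\ga(\log t)\, dt \;=\; -F_\ga(z)\;=\; -z^{\ga+1} Q_\ga(\log z).
\end{equation*}
For the boundary case $\ga=-1$, the hypothesis $\deg P_{-1}=0$ means $P_{-1}\equiv P_{-1}(0)$, the antiderivative is $P_{-1}(0)\log t$, and its regularized limit at infinity vanishes, yielding $\regint_z^\infty t\ii P_{-1}(0)\, dt = -P_{-1}(0)\log z$. Thus in this case $Q_{-1}(\log z)=P_{-1}(0)\log z$, as required, and there is no additive constant.

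Summing the contributions and inserting the overall minus sign produces exactly the claimed expansion, and the final assertion $\LIM_{z\to\infty}\log\detz(H+z)=0$ is read off from the fact that each $z^{\ga+1}Q_\ga(\log z)$ with $\Re\ga>-1$ carries no constant term (the exponent $\ga+1$ has positive real part, so this contributes no $z^0\log^0 z$-term) and the $\ga=-1$ contribution is purely a multiple of $\log z$. The main delicate point — and the only place one must be careful — is the justification of the translation identity via Lemma \ref{L.gen.1}; the remainder of the argument is a bookkeeping exercise with elementary antiderivatives, so I do not anticipate any real obstacle beyond verifying that the hypotheses of that lemma are indeed met.
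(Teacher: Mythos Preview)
Your proposal is correct and follows exactly the route the paper indicates: the paper's own proof consists only of the remark that the result follows from the definition of the regularized integral, formula \eqref{eq.gen.ZetaPrimeZero}, and Lemma~\ref{L.gen.1}, with details deferred to \cite[Lemma~2.2]{LesTol:DOD}. You have simply (and accurately) spelled out those details, including the term-by-term antidifferentiation and the special treatment of the $\ga=-1$ case.
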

\begin{proof} This follows in a straightforward fashion
  from the definition of the regularized integral, the relation
  \Eqref{eq.gen.ZetaPrimeZero} and Lemma \ref{L.gen.1}.
  For details, \cf \cite[Lemma 2.2]{LesTol:DOD}.
\end{proof}

With regard to Lemma \ref{L.gen.1} we emphasize that under the assumptions
of the previous Lemma we have for the zeta-determinant of $H+\nu^2$
\begin{equation}
  \log\detz(H+\nu^2) = - \regint_{\nu^2}^\infty \Tr(H+z)\ii dz 
				 = -2 \regint_{\nu}^\infty z\cdot \Tr(H+z^2)\ii dz.
\end{equation}

The result
\begin{equation}
\LIM_{z\to\infty} \log \detz (H+z) = 0
\end{equation}
contains the main result of \cite{Fri89} as a special case. Namely, one has for
$z\geq 0$ and invertible $H$
\begin{equation}
\detf(I + z H^{-1}) = \frac{\detz(H+z)}{\detz H},
\end{equation}
where $\detf$ denotes the Fredholm determinant.
This follows immediately from the fact that the left hand side and the right hand side 
have the same $z$-derivatives and that they coincide at $z=0$. Therefore, we have as 
$z\to\infty$,
\begin{equation}
\begin{split}
\log \detf (I+z H^{-1}) &= \log \detz(H+z) - \log \detz H\\
&=\sum_{-1-\delta < \Re \ga < 0} z^{\ga+1}\cdot
Q_\ga(\log z)  - \log \detz H + O( z^{-\delta} ).
\end{split}
\end{equation}
In particular, 
\begin{equation}
\LIM_{z\to\infty} \log\detf (I+z H^{-1}) = -\log \detz H,
\end{equation}
which is Friedlander's \cite{Fri89} formula.

\subsection{Wronskians and their variation}
\label{ss.wronskians}

Let 
\begin{equation} \label{eq.gen.diffop}
  H_0 = -\frac{d}{dx}\Bigl( p(x) \frac{d}{dx}\cdot \Bigr) + V_0(x),
\end{equation}
be a differential operator on the interval 
$(a,\infty)$, $a>0 $, with a positive continuous 
function $p\in C[a,\infty)$, $p(x)>0$, and locally integrable
potential $V_0\in L^1_{\loc}[a,\infty)$. Furthermore, we assume that
$a$ is a regular point and that $\infty$ is in the limit point case for
$H_0$. We fix a self-adjoint boundary condition $R_a f = 0$ at $a$
and assume that $H_0$ with this boundary condition is invertible.

Let $\psi$, $\phi$ be a fundamental system of solutions to the
differential equation $H_0f = 0$ with $R_a \phi=0$ and $\psi\in L^2[a,\infty)$.
Then the \emph{Wronskian}
\begin{equation} \label{eq.gen.wronskian-1}
    p \cdot W(\psi,\phi) = p \cdot \Bigl( \psi \cdot \phi' - \psi' \cdot\phi\Br
\end{equation}  is constant. The Schwartz kernel (Green function) of $H_0^{-1}$ is given 
by
\begin{equation}\label{eq.gen.green-kernel}
  G(x,y) = \frac{1}{p \cdot W(\psi,\phi)}\cdot 
       \begin{cases}
            \phi(x) \cdot \psi(y),  &  x\leq y,  \\
            \psi(x) \cdot \phi(y),  &  y\leq x.
        \end{cases}
\end{equation}
\details{ 
In fact, given $g\in L^2[a,\infty)$ put
\begin{equation*}
     f(x) = \phi(x) \cdot \int_{x}^{\infty} \psi(y)g(y) dy 
           + \psi(x)\cdot \int_{a}^{x}\phi(y)g(y) dy,
\end{equation*}
a direct calculations shows that
\begin{equation*}
    H_0f = -p\cdot W(\phi,\psi) \cdot g = p\cdot  W(\psi,\phi)\cdot g.
\end{equation*}
} 

Suppose now that $V_0$ depends differentiably on a parameter $t$. 
Assume that $\psi_t$ and $\phi_t$ are solutions as above depending 
differentiably on $t$. Denote the differentiation by $t$ by a dot 
decorator, \textit{e.g.} $\pl_t \phi =: \dot\phi$ and differentiation by
$x$ by a ' decorator, \textit{e.g.} $\pl_x \phi =: \phi'$.
Differentiate the differential equation 
$-\bl p \cdot  \psi '\br' + V_0\cdot \psi = 0$
by $t$ to obtain
\begin{equation}
  -\bl p \cdot  \dot\psi' \br' + V_0 \cdot \dot\psi(x) = -\dot V_0 \cdot \psi,
\end{equation} 
and similarly for $\phi$. Hence 
\begin{equation}
  p\cdot W(\psi,\phi) \cdot \dot V_0(x) \cdot  G(x,x) = \dot V_0 (x) \cdot  \phi(x)\cdot \psi(x),
\end{equation}
thus 
\begin{equation} \label{eq.d.pert.W}
\begin{split}
\dot V_0 \cdot \phi \cdot\psi
    & = \bl \dot V_0 \cdot \phi\br \cdot \psi
        = \Bl \pl_x \bl p \cdot \pl_x \dot \phi \br - V_0 \dot\phi \Br \cdot\psi\\
    & = \psi \cdot \pl_x \bl p \cdot \pl_x \dot\phi \br 
         - \dot\phi \cdot \pl_x \bl p\cdot \pl_x \psi \br \\
    & = \frac{d}{dx}\Bl p \cdot \bl \pl_x\dot{\phi} \br \cdot \psi 
           - p \cdot \dot\phi \cdot \pl_x \psi \Br 
	    = \frac{d}{dx}\Bl p\cdot W(\psi,\dot\phi)\Br.
\end{split}
\end{equation} 
Thus if the operator $\dot{V}_0 H_0\ii$ is trace class, then
    \begin{multline}
   p\cdot W(\psi,\phi) \cdot \Tr\bl \dot V_0 H_0^{-1}\br 
    = \int_a^\infty  \dot V_0 \cdot \phi \cdot\psi \\
    = \left.p(x)\cdot W(\psi,\dot{\phi})(x)\right|_{x=a}^{x=\infty} = 
       \left.p(x)\cdot W(\phi,\dot{\psi})(x)\right|_{x=a}^{x=\infty},
    \end{multline}   
where the second equation follows by exchanging $\phi$ and $\psi$ in
the calculation. Note that the trace class property plus the
regularity at $a$ imply the existence of the limit
$\lim\limits_{x\to\infty} p(x)W(\psi,\dot{\phi})(x)$.
Furthermore, 
\begin{equation}
\pl_t \Bl p \cdot W (\psi,\phi)\Br
  = p\cdot \Bl W(\dot\psi,\phi)+W(\psi,\dot\phi)\Br.
\end{equation}
By \Eqref{eq.gen.wronskian-1} the Wronskian $p\cdot W(\psi,\phi)$ is
a constant function in $x$. Moreover, if at the regular end $\phi$ is normalized, 
then $\dot{\phi}(a) = \frac{d}{dx}\dot{\phi}(a) = 0$,
hence $W(\dot{\phi},\psi)(a) = 0$.
Thus altogether we have proved
\begin{prop} \label{GEN-Trace-Formula} Let $H_0$ be the
differential operator \Eqref{eq.gen.diffop} and assume that
$(V_{0,t})_t$ depends differentiably on a parameter $t$.
Furthermore, let $\phi_t,\psi_t$ be a fundamental system
of solutions such that $\phi_t$ is normalized at $a$ and
$\psi_t\in L^2[a,\infty)$; assume that $\phi_t, \psi_t$
depend differentiably on $t$. Then 
\begin{equation} \label{GEN-Wronskian-2}
p(a)\cdot W(\phi,\psi)(a) = \partial_t\bl p \cdot W(\phi,\psi)\br
  = - \pl_t\bl p\cdot W(\psi,\phi)\br.
\end{equation}
Furthermore, if $\dot V_0 H_0\ii$ is trace class and if
\[
\lim_{x\to\infty} p(x)W(\phi,\dot{\psi})(x) = 0
\] 
then
\begin{equation}
\begin{split}
  \Tr\bl \dot{V}_0 H_0\ii \br
    & = \frac{1}{p \cdot W(\psi,\phi)} \left. 
                      p \cdot W(\phi,\dot{\psi})\right|_a^\infty\\
    & = \frac{1}{p \cdot W(\psi,\phi)} \pl_t\Bl p\cdot W(\psi,\phi)\Br\\
    & = \pl_t \log \Bl p\cdot  W(\psi,\phi)\Br.
\end{split}
\end{equation}
\end{prop}

\subsection{Perturbative solutions, B\^ocher's Theorem}
\label{ss.Pert-Boch}
Let $H_0$ be as in \Eqref{eq.gen.diffop}. We do not impose any boundary
condition in this subsection. Let $\phi,\psi$ be any fundamental system of
solutions to the differential equation $H_0 f = 0$. Without loss of
generality, we may assume their Wronskian equals $1$, \ie $p\cdot
W(\psi,\phi) = 1$. The solution formula for the inhomogeneous equation 
$H_0 u = v$ then reads
\begin{equation}\label{GEN-VarConst}
  \begin{split}
     u(x)  & = c_1\cdot \psi(x)+c_2\cdot \phi(x) - \psi(x)\cdot
    \int_{x}^{\infty} \phi(y)\cdot v(y) dy \\
    &\qquad +\phi(x)\cdot \int_{x}^{\infty}  \psi(y)\cdot v(y) dx,
\end{split} 
\end{equation}
if, for all $x\in(a,\infty)$,
\begin{equation}
  \int_{x}^{\infty}|\psi(y) v(y)|dy <\infty,\quad \text{ and } \quad
  \int_{x}^{\infty}|\phi(y) v(y)| dx <\infty.
\end{equation}  

This formula may be used to find a fundamental system of solutions with prescribed
asymptotics for perturbations $H = H_0 + V$ of $H_0$. Here we just present the
general pattern. We will apply this to our concrete model operator in
section \ref{Bocher} below. For a solution of $Hf = 0$ we make the
perturbative Ansatz $h_1(x) = \psi(x)(1+f_1(x))$. This
leads to the non-homogeneous equation
\begin{equation}
    H_0\bl \psi \cdot f_1 \br = - V \cdot \psi \cdot \bl 1 + f_1\br.
\end{equation}
We denote by $L$ the integral operator with kernel
\begin{equation}  \label{kernelL}
    L(x,y) = p(y)\cdot \psi^2(y) \cdot\Bl \frac{\phi(y)}{\psi(y)} -
    \frac{\phi(x)}{\psi(x)}\Br.
\end{equation} 
Then writing $V =: p\cdot W$, \Eqref{GEN-VarConst} leads to the ansatz
\begin{equation}\label{GEN-f-1}
f_1(x) = \int_{x}^{\infty}L(x,y)\cdot W(y)\cdot (1+f_1(y)) dy.
\end{equation} 
Note that, since
\begin{equation}\label{q.psi.phi}
\left(\frac{\phi}{\psi}\right)' = \frac{1}{p\cdot\psi^2},
\end{equation} 
we find
\begin{equation}\label{Deriv.L}
(Lf)'(x) = -\frac{1}{p(x)\cdot \psi^2(x)} \int_{x}^{\infty} p(y)\cdot
\psi^2(y)\cdot f(y)dy.
\end{equation}
Now consider the following assumptions:
\begin{align}
\sup_{a\leq x \leq y \leq \infty} |L(x,y)| &< \infty, \label{Assump1}\\
\lim_{x\to \infty} \psi(x) &= 0, \label{Assump2}\\
\frac{\psi(x)}{\psi'(x)} &= O(1),\;\text{as}\; x\to\infty,\label{Assump3}\\
\sup_{a\leq x \leq y \leq \infty} 
\frac{p(y)\cdot \psi(y)}{p(x)\cdot \psi(x)}&<\infty,\label{Assump4}\\
\int_{1}^{x}\frac{\phi(y)}{\psi(y)}dy &= O\left(\frac{\phi(x)}{\psi(x)}\right),\;\text{as}\; 
x\to\infty,\;{\rm and} \lim_{x\to\infty}\frac{\phi(x)}{\psi(x)} = \infty\label{Assump5}.
\end{align}

Denote by $C_b^k[a,\infty)$ the Banach space of $k$-times
  continuously differentiable functions with bounded derivatives up to
  order $k$. I.~e.
\begin{equation}
    \| f \|_{C_b^k} := \sum_{j = 0}^k \sup_{a\le x <\infty} | f^{(k)}(x) |.
\end{equation}
Furthermore, the space $X^{-\gamma}C_b^k[a,\infty)$
is a Banach space with norm $\|f\|_{C_b^k,\gamma} := \| X^\gamma
f\|_{C_b^k}$. \mpar{this definition breaks if $a\le 0$} We write $C_b[a,\infty) := 
C^0_b[a,\infty)$ for the Banach space of bounded continuous functions. We also write 
$C^k_\bullet [a,\infty)\subset C^k_b [a,\infty)$ for the subspace of bounded $k$ times 
continuously differentiable functions which converge to zero at infinity along with their 
derivatives up to order $k$.

\begin{lemma}\label{L.op}
	Let $W$ be a function in $L^1(a,\infty)$. Then for each $\gamma \geq 0$ the 
	Volterra 
	operator $LW$ maps $X^{-\gamma} C_b[a,\infty)$ continuously into 
	$X^{\gamma}C^1_{\bullet}[a,\infty)$. Moreover, as an operator in 
	$X^{-\gamma}C_b[a,\infty)$ it has spectral radius zero. Finally, the map 
	\begin{equation}
	L^1[a,\infty) \ni W \mapsto LW \in \mathcal{L}(X^{-\gamma}C_{b}[a,\infty))
	\end{equation}
	is continuous from $L^1[a,\infty)$ into the bounded linear operators 
	on $X^{-\gamma}C_{b}[a,\infty)$, where for $f\in 
	X^{-\gamma}C_b[a,\infty)$
	\begin{equation}
	(L W f) (x) = \int_{x}^{\infty} L(x,y) W(y) f(y) dy.
	\end{equation}
\end{lemma}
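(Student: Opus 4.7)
The plan is to reduce the three assertions of the lemma to direct Volterra estimates based on the standing hypotheses \Eqref{Assump1}--\Eqref{Assump5}, in the spirit of a classical Neumann series analysis.

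I would first establish the mapping property. Fix $\gamma \geq 0$ and $f \in X^{-\gamma} C_b[a,\infty)$, so $|f(y)| \leq \|f\|_{C_b,\gamma}\, y^{-\gamma}$ for $y \geq a$. The uniform kernel bound $|L(x,y)| \leq M$ from \Eqref{Assump1}, together with the trivial inequality $(x/y)^\gamma \leq 1$ for $a \leq x \leq y$ and $\gamma \geq 0$, yields
\begin{equation*}
  x^\gamma |(LWf)(x)| \leq M \|f\|_{C_b,\gamma} \int_x^\infty |W(y)| \, (x/y)^\gamma \, dy \leq M \|f\|_{C_b,\gamma} \int_x^\infty |W(y)| dy,
\end{equation*}
which is bounded in $x$ and tends to $0$ as $x\to\infty$ since $W\in L^1$. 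The derivative I would control via \Eqref{Deriv.L}: combining \Eqref{Assump4} with the eventual monotone decay of $\psi$ implicit in \Eqref{Assump2}--\Eqref{Assump3} yields a uniform bound on $p(y)\psi^2(y)/(p(x)\psi^2(x))$ for $a\leq x\leq y$, which produces an analogous estimate $|(LWf)'(x)| \leq C \int_x^\infty |W(y)f(y)| dy$ that also vanishes at infinity. Hence $LWf \in X^\gamma C^1_\bullet[a,\infty)$ with continuous dependence on $f$.

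For the spectral radius, I would set $\sigma(x) := \int_x^\infty |W(y)|dy$, so that $\sigma' = -|W|$ and $\sigma(a) = \|W\|_{L^1}$, and iterate the preceding estimate. An induction on $n$ using the identity $\int_x^\infty \sigma(y)^k\,|W(y)|\, dy = \sigma(x)^{k+1}/(k+1)$ gives
\begin{equation*}
  \sup_{x\geq a} x^\gamma |(LW)^n f(x)| \leq \frac{(M\sigma(a))^n}{n!} \|f\|_{C_b,\gamma},
\end{equation*}
so that $\|(LW)^n\|_{\mathcal{L}(X^{-\gamma}C_b)} \leq (M\|W\|_{L^1})^n/n!$, forcing the spectral radius to be $0$ by Stirling. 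Continuity in $W$ is then immediate from linearity: the estimate $\|LW\|_{\mathcal{L}(X^{-\gamma}C_b)}\leq M\|W\|_{L^1}$ gives $\|LW_1-LW_2\|_{\mathcal{L}}\leq M\|W_1-W_2\|_{L^1}$, which is Lipschitz and therefore continuous.

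The main technical point I anticipate is the derivative estimate: assumption \Eqref{Assump4} controls only a single power of $p\psi$, so to bound $|(LWf)'|$ one must upgrade this to control of $p\psi^2$ by invoking the decay of $\psi$ from \Eqref{Assump2} and the logarithmic-derivative bound \Eqref{Assump3} to ensure that $\psi(y)/\psi(x)$ remains bounded for $y\geq x$. Everything else is a textbook Volterra iteration.
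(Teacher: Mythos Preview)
Your proposal is correct and follows essentially the same route as the paper: the uniform kernel bound \Eqref{Assump1} gives the zeroth-order estimate, the Volterra iteration with $\sigma(x)=\int_x^\infty |W|$ yields the $n!$ bound and spectral radius zero, and the derivative is handled via \Eqref{Deriv.L}. You are in fact more careful than the paper on the derivative step: the paper simply cites \Eqref{Assump4} to bound $p(y)\psi^2(y)/(p(x)\psi^2(x))$, whereas you correctly note that \Eqref{Assump4} only controls one power of $p\psi$ and that the missing factor $\psi(y)/\psi(x)$ must be handled separately via \Eqref{Assump2}--\Eqref{Assump3}.
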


\begin{proof}
	Clearly by \Eqref{Assump1},
	\begin{equation}
	\begin{split}
	|(LWf)(x)| &\leq \int_{x}^{\infty} |W(y)|\cdot |f(y)| dy\\
	&\leq x^{-\gamma} \cdot \|f\|_{\gamma} \cdot \int_{x}^{\infty} |W(y)| dy
	\end{split}
	\end{equation} and inductively
	\begin{equation}
	\bigl|((LW)^nf)(x)\bigr|\leq x^{-\gamma} \cdot \frac{\|f\|_\gamma}{n!}\cdot 
	\left(\int_x^\infty 
	|W(y)|dy\right)^n.
	\end{equation} This proves that $LW$ maps $X^{-\gamma}C_b[a,\infty)$ continuously 
	into $C_{\bullet,\gamma}[a,\infty)$ and that, as an operator in $X^\gamma 
	C_b[a,\infty)$ it	has spectral radius zero. It also implies the last sentence of 
	the Lemma.
	
	Furthermore, from \Eqref{Deriv.L} we infer
	\begin{equation}
	\begin{split}
	|(LWf)'(x)| &= \left|\frac{1}{p(x)\cdot \psi^2(x)} \int_{x}^{\infty} 
	p(y)\cdot \psi^2(y)\cdot W(y)\cdot f(y)dy\right|\\
	&\leq C_1 x^{-\gamma} \cdot \|f\|_{\gamma} \cdot \int_{x}^{\infty} |W(y)|dy\\
	&\leq C_2 x^{-\gamma} \|f\|_{\gamma},
	\end{split}
	\end{equation} by \Eqref{Assump4}.
\end{proof}

\begin{theorem}
Suppose that there exists $x_0\in[a,\infty)$ such that $\psi(x)\not = 0$ for
$x\geq x_0$.  Under the assumptions \Eqref{Assump1}, \eqref{Assump2},
\eqref{Assump3}, \eqref{Assump4} and \eqref{Assump5} the perturbed operator 
\begin{equation}
	H = H_0 + V = H_0 + p\cdot W
\end{equation} 
has a fundamental system of solutions $h_1$, $h_2$ of the form
\begin{equation}
	h_1(x) = \psi(x) \cdot g_1(x), \qquad h_2(x) = \phi(x) \cdot g_2(x),
\end{equation} 
with $g_j\in C_b[a,\infty)$, $\lim\limits_{x\to\infty} g_j(x) = 1$, $j=1,2$. Furthermore,
\begin{equation}
	h'_1(x) = \psi'(x) \cdot \tilde g_1(x), \qquad h'_2(x) = \phi'(x) \cdot \tilde g_2(x),
\end{equation} 
with $\tilde g_j \in C_b[a,\infty)$, $\lim\limits_{x\to\infty} \tilde g_j(x) = 1$, 
$j=1,2$ and $p(x)W(h_1,h_2)(x) = 1$.
\end{theorem}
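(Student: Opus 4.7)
The plan is to build $h_1$ from the fixed-point equation described in the paragraph preceding the theorem, and then obtain $h_2$ from $h_1$ by reduction of order. Substituting the ansatz $h_1 = \psi(1+f_1)$ into $(H_0+V)h_1=0$, using $H_0\psi=0$ and the variation-of-constants formula \Eqref{GEN-VarConst}, leads exactly to the Volterra equation \Eqref{GEN-f-1}, i.e.
\begin{equation}
  (I-LW)f_1 = LW\mathbf{1}.
\end{equation}
By Lemma \ref{L.op} the operator $LW$ has spectral radius zero on $C_b[a,\infty)$, so $f_1 = \sum_{n\ge 1} (LW)^n \mathbf{1}$ converges in $C_b[a,\infty)$, and since $LW$ in fact maps into $C^1_{\bullet}[a,\infty)$, we have $f_1\in C^1_{\bullet}$; in particular $g_1 := 1+f_1\to 1$ at infinity. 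Differentiating, $h_1' = \psi' g_1 + \psi f_1' = \psi'\tilde g_1$ with $\tilde g_1 := g_1 + (\psi/\psi')f_1'$; assumption \Eqref{Assump3} bounds $\psi/\psi'$, and $f_1'\to 0$ because $f_1\in C^1_{\bullet}$, so $\tilde g_1\to 1$.

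For $h_2$, I choose $x_1\ge x_0$ large enough that $h_1 = \psi g_1$ is non-vanishing on $[x_1,\infty)$; this is possible because $\psi\ne 0$ on $[x_0,\infty)$ and $g_1\to 1$. On that interval I set
\begin{equation}
  h_2(x) := h_1(x)\int_{x_1}^x \frac{dy}{p(y) h_1(y)^2},
\end{equation}
and extend $h_2$ to $[a,\infty)$ by solving the Cauchy problem $H h_2=0$ with these data (the ODE is regular there). Reduction of order guarantees $p\cdot W(h_1,h_2) \equiv p h_1^2 / (p h_1^2) = 1$ throughout. Using the key identity \Eqref{q.psi.phi},
\begin{equation}
  \frac{1}{p(y) h_1(y)^2} = \frac{1}{g_1(y)^2}\left(\frac{\phi}{\psi}\right)'(y),
\end{equation}
so writing $g_1^{-2} = 1+\varepsilon$ with $\varepsilon\to 0$ I split the integral as $I(x) = (\phi/\psi)(x) - (\phi/\psi)(x_1) + R(x)$, where $R(x) = \int_{x_1}^x (\phi/\psi)'\,\varepsilon(y)\,dy$. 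A Ces\`aro-type argument based on the divergence $\phi/\psi\to\infty$ from \Eqref{Assump5} yields $R(x) = o(\phi(x)/\psi(x))$. Therefore $\psi I/\phi\to 1$, and $h_2 = \phi\cdot g_2$ with $g_2 := g_1\cdot \psi I/\phi \to 1$. Differentiating, $h_2' = h_1' I + 1/(p h_1)$; writing this as $\phi'\tilde g_2$ and combining $\tilde g_1\to 1$ and $\psi I \sim \phi$ with the Wronskian identity $p\psi\phi' - p\psi'\phi = 1$ produces the algebraic cancellation forcing $\tilde g_2\to 1$.

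The construction of $h_1$ is routine once Lemma \ref{L.op} is available, so the main obstacle lies in the asymptotic analysis of $h_2$. The step $I(x)\sim \phi(x)/\psi(x)$ is where assumption \Eqref{Assump5} enters in an essential, non-pointwise way (it must be used integrally, via Ces\`aro/Abel summation, rather than as a pointwise growth bound), and the accompanying derivative statement $\tilde g_2\to 1$ requires a subtle matching with the Wronskian normalization $p W(\psi,\phi)=1$ rather than a crude estimate. The hypothesis $\psi\ne 0$ for $x\ge x_0$ is used precisely to make the reduction-of-order integral $\int_{x_1}^x dy/(p h_1^2)$ well-defined.
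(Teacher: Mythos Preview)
Your argument is correct and follows the same architecture as the paper: build $h_1=\psi(1+f_1)$ from the Volterra equation via Lemma~\ref{L.op}, then obtain $h_2$ by reduction of order and analyze $g_2=h_2/\phi$ asymptotically. The one substantive difference is how you establish $I(x)\sim\phi(x)/\psi(x)$. The paper integrates by parts,
\[
  \int_{x_0}^{x}\frac{1}{p\psi^{2}}\,g_1^{-2}
   = \Bigl[\frac{\phi}{\psi}\,g_1^{-2}\Bigr]_{x_0}^{x}
     + 2\int_{x_0}^{x}\frac{\phi}{\psi}\cdot\frac{g_1'}{g_1^{3}}\,dy,
\]
and then invokes the integral bound $\int^{x}\phi/\psi=O(\phi/\psi)$ from \Eqref{Assump5} together with $g_1'\to 0$. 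Your direct Ces\`aro estimate on $R(x)=\int_{x_1}^{x}(\phi/\psi)'\varepsilon$ uses only the positivity $(\phi/\psi)'=1/(p\psi^{2})>0$ from \Eqref{q.psi.phi} and the divergence $\phi/\psi\to\infty$; this is slightly more elementary and in fact does not touch the integral half of \Eqref{Assump5}. You also spell out the verification of $\tilde g_2\to 1$ via the Wronskian identity $p\psi\phi'-p\psi'\phi=1$, a step the paper leaves implicit after recording $p\,W(h_1,h_2)=1$.
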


\begin{proof} We denote the
	constant function equal to one by a bold number one, ${\bf 1}(x)  = 1$ for all $x$.
	By equation \Eqref{GEN-f-1} and the properties of Lemma \ref{L.op} we conclude that
	\begin{equation}\label{f1.LW}
	f_1(x) = (I-LW)^{-1}(LW{\bf 1})(x)
	\end{equation} is in $C^1_\bullet[a,\infty)$, hence
	\begin{equation}
	h_1(x) = \psi(x)\cdot(1+f_1(x))
	\end{equation}
	has the claimed properties. Note that
	\begin{equation}
	h_1'(x) = \psi'(x)\cdot \bl 1+f_1(x)+\frac{\psi(x)}{\psi'(x)}\cdot f'_1(x)\br,
	\end{equation} 
	and by assumption \Eqref{Assump3}, we conclude that $\tilde g_1 = 
	1+f_1+\frac{\psi}{\psi'} \cdot f'_1\in 
	C_b[a,\infty)$ and
	\begin{equation}
	\lim_{x\to\infty} \tilde g_1(x) = 1.
	\end{equation}
	
	For the second solution one finds
	\begin{equation}
	h_2(x) = c(x)\cdot h_1(x),
	\end{equation}
	with
	\begin{equation}
	c(x) = \int_{x_0}^{x} p(y)^{-1} \cdot h_1(y)^{-2} dy.
	\end{equation}
	From \Eqref{q.psi.phi} we infer by integrating by parts
	\begin{equation}
	\begin{split}
	c(x) &= \int_{x_0}^{x} \frac{1}{p(y) \cdot \psi(y)^2}(1+f_1(y))^{-2} dy\\
		 &=\frac{\phi(y)}{\psi(y)} (1+f_1(y))^{-2}\Bigr|_{x_0}^x + 2 \int_{x_0}^{x} 
			\frac{\phi(y)}{\psi(y)}\cdot\frac{g_1'(y)}{g_1(y)^3} dy.
	\end{split}
	\end{equation}
	Thus,
	\begin{equation}
	\begin{split}
	g_2(x) =& c(x)\frac{h_1(x)}{\phi(x)}\\
		   =& \frac{1}{g_1(x)} - \frac{\psi(x)}{\phi(x)} \cdot
		   \frac{\psi(x_0)}{\phi(x_0)}\cdot\frac{g_1(x)}{g_1(x_0)^{2}}
		   + 2g_1(x)\cdot \frac{\psi(x)}{\phi(x)}\int_{x_0}^{x}\frac{\phi(y)}{\psi(y)} 
		   \cdot
		   \frac{g_1'(y)}{g_1(y)^3} dy.
	\end{split}
	\end{equation} By assumption \Eqref{Assump5},
	\begin{equation}
	\Bigr|\frac{\psi(x)}{\phi(x)}\int_{x_0}^{x}\frac{\phi(y)}{\psi(y)} dy\Bigl| \leq C,
	\end{equation} and since $\lim\limits_{x\to\infty }g'_1(x) = 0$ by Lemma \ref{L.op},
	we obtain $\lim\limits_{x\to\infty }g_2(x) = 1$.
	
	Furthermore, direct computation shows
	\begin{equation}
	p(x)\cdot W(h_1,h_2)(x) = 1.\qedhere
	\end{equation}
\end{proof}

\subsection{Comparison of zeta-determinants for Dirichlet and generalized Neumann
boundary conditions}
\label{ss.comparison}

In this subsection we will show that under very mild assumptions it is
possible to compute the relative $\zeta$--determinant of the operator $H_0$ in
\Eqref{eq.gen.diffop} with respect to two different boundary conditions at the
left endpoint. Under the general assumptions of Sec. \ref{ss.wronskians} we
consider the Dirichlet boundary condition at $a$, $f(a) = 0$, and a generalized Neumann 
boundary condition at $a$, $R_af = f'(a) + \alpha \cdot f(a)$. Furthermore, let
$\phi=\phi_z,\psi=\phi_z$ be a fundamental system of solutions to the equation
$(H+z^2)u = 0$ such that $\phi(a) = 0, \phi'(a) = p(a)^{-\frac{3}{4}}$ (that
is, $\phi$ is normalized at $a$) and such that $\psi\in L^2[a,\infty)$.  We
suppress the $z$--dependence from the notation.  Furthermore, we consider
$z$ such that $H+z^2$ with both Dirichlet and the generalized Neumann boundary condition 
are invertible. This is certainly the case if $z$ is large enough.
Consequently, $\psi(a)\not=0\not=R_a\psi$. Denote by $H_D$ the self-adjoint extension of 
$H$ with Dirichlet boundary condition and by $H_\ga$ the self-adjoint extension of $H$ 
with generalized boundary condition $R_a$.

For the normalized solution $\phi_{\alpha} = \phi_{\alpha,z}$
satisfying the generalized Neumann condition we make the Ansatz
\begin{equation}\label{eq.DirNeu1}
  \begin{split}
    \phi_\alpha    & = \lambda_\alpha \cdot \phi + \mu_\alpha \cdot \psi\\
    \phi_\alpha(a) & = p(a)^{-\frac 14}, \quad \phi'_\alpha(a)+\alpha\cdot \phi_\alpha(a)=0
  \end{split}  
\end{equation}
and find
\begin{equation}\label{eq.DirNeu2}
	\begin{split}
	\mu_{\alpha}     & = \psi(a)^{-1}\cdot p(a)^{-\frac 14} \\
	\lambda_{\alpha} & = -\mu_{\alpha} \cdot \frac{R_a\psi}{\phi'(a)} 
				= -\sqrt{p(a)}\cdot \Bl \alpha + 
				\frac{\psi'(a)}{\psi(a)}\Br.
	\end{split}
	\end{equation}
Note that $\mu_\ga$ is independent of $\ga$ while $\pl_\ga \gl_\ga = -\sqrt{p(a)}$.

\details{
\begin{proof}
	\begin{equation}
	p(a)^{-\frac{1}{4}} = \psi_{\alpha}(a) = \lambda_\alpha \cdot \phi(a)+\mu_\alpha 
	\cdot \psi(a) 
	\Rightarrow
	\mu_\alpha = p(a)^{-\frac{1}{4}} \cdot \psi(a)^{-1},
	\end{equation}
	and
	\begin{equation}
	\begin{split}
	\lambda_\alpha \cdot \phi'(a) + \mu_\alpha \cdot \psi'(a) &= \phi'_\alpha(a) \\
			&= -\alpha \cdot \phi_\alpha(a)\\
			&= -\alpha \cdot p(a)^{-\frac{1}{4}}\\
			&= -\alpha \cdot \mu_\alpha \cdot \psi(a) \Rightarrow\\
	\lambda_\alpha \phi'(a) &= -\mu_\alpha \bl \alpha \cdot \psi(a) + \psi'(a) \br 
	= -\mu_\alpha \cdot R\psi.
	\end{split}
	\end{equation} Thus
	\begin{equation}
	\begin{split}
	\lambda_\alpha &= - \mu_\alpha \cdot\frac{R_a\psi}{\phi'(a)}\\
	&= - \frac{1}{p(a)^{\frac{1}{4}} \cdot \psi (a)} \cdot 
							\frac{\psi'(a)+\alpha\cdot\psi (a)}{p(a)^{-\frac{3}{4}}}\\
	&= -\sqrt{p(a)}\cdot \Bl \alpha + \frac{\psi'(a)}{\psi(a)}\Br. 
	\end{split}
	\end{equation}
\end{proof}
} 
Furthermore, we have for the Wronskians
\begin{align}
  p\cdot W(\psi,\phi ) & = p(a) \cdot \psi(a)\cdot\phi'(a) = \mu_\ga\ii, 
  \label{eq.Wronsk.sec2.5.1}\\
  p\cdot W(\psi_\ga,\phi_\ga) &= \gl_\ga \cdot p\cdot W(\psi,\phi) = 
  \frac{\gl_\ga}{\mu_\ga}.\label{eq.Wronski.sec2.5.2}
\end{align}

\begin{lemma} For $z\ge 0$ such that $\psi(a)=\psi_z(a)\not = 0$ we have 
  \begin{equation}
    \int_a^\infty \psi_z(y)^2 dy = \frac{1}{2z\cdot \mu_{\ga,z}^2} \frac{d}{dz}
    \gl_{\ga,z}.
  \end{equation}
\end{lemma}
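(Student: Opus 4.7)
The plan is to use a Lagrange-type identity to relate $\int_a^\infty \psi_z^2\, dy$ to boundary values of the Wronskian $p\cdot W(\psi_z,\partial_z\psi_z)$, and then identify those boundary values in terms of $\partial_z\lambda_{\alpha,z}$ and $\mu_{\alpha,z}$ via the explicit formulas \eqref{eq.DirNeu2}.

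Differentiating $(H+z^2)\psi_z = 0$ in the parameter $z$ yields $(H+z^2)\partial_z\psi_z = -2z\cdot \psi_z$. The standard Green-type manipulation (multiply the first equation by $\partial_z\psi_z$, the second by $\psi_z$, subtract, and rearrange the second-order terms) produces the pointwise identity
\[
\partial_x\bigl[\,p\cdot W(\psi_z,\partial_z\psi_z)\,\bigr] = 2z\cdot \psi_z^2,
\]
of the same type as \Eqref{eq.d.pert.W}. Integration from $a$ to $\infty$ then gives
\[
2z\int_a^\infty \psi_z(y)^2\, dy \;=\; \bigl[\,p\cdot W(\psi_z,\partial_z\psi_z)\,\bigr]_a^\infty.
\]

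The contribution at infinity vanishes thanks to the exponential decay of $\psi_z(x)\sim x^{-1/2}K_z(\mu x)$, together with the holomorphic dependence of $K_\nu$ on its order, which yields analogous exponential decay for $\partial_z\psi_z$ and both $x$-derivatives. For the contribution at $a$, differentiating the explicit formula $\lambda_{\alpha,z} = -\sqrt{p(a)}\bigl(\alpha + \psi_z'(a)/\psi_z(a)\bigr)$ in $z$ gives
\[
\partial_z \lambda_{\alpha,z} \;=\; -\sqrt{p(a)}\cdot \frac{W(\psi_z,\partial_z\psi_z)(a)}{\psi_z(a)^2},
\]
while $\psi_z(a)^2 = p(a)^{-1/2}\mu_{\alpha,z}^{-2}$ follows from $\mu_{\alpha,z} = p(a)^{-1/4}\psi_z(a)^{-1}$. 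Combining the two identities yields
\[
p(a)\cdot W(\psi_z,\partial_z\psi_z)(a) \;=\; -\,\mu_{\alpha,z}^{-2}\cdot \partial_z \lambda_{\alpha,z},
\]
and substitution into the integrated identity above, followed by division by $2z$, produces the claim.

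The main technical obstacle is the rigorous justification that $p\cdot W(\psi_z,\partial_z\psi_z)(x)\to 0$ as $x\to\infty$. What is needed is control of $\partial_z\psi_z$ and $\partial_x\partial_z\psi_z$ at infinity, not just of $\psi_z$ itself. I expect this to follow by differentiating the perturbative representation $\psi_z = x^{-1/2}K_z(\mu x)\cdot g_{1,z}(x)$ constructed in Sec.~\ref{ss.Pert-Boch}: the $z$-dependence enters both through the Bessel factor, whose $z$-derivative still decays exponentially by the uniform asymptotics in Sec.~\ref{s.bessel}, and through $g_{1,z}$, whose smooth dependence on $z$ is inherited from the contraction-mapping scheme of Lemma \ref{L.op}.
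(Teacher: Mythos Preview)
Your argument is correct, but it follows a different route from the paper's proof. The paper works through the Green function: it writes $\psi_z(y)$ as a multiple of the resolvent kernel $(H_\alpha+z^2)^{-1}(a,y)$, identifies $\int_a^\infty \bigl((H_\alpha+z^2)^{-1}(a,y)\bigr)^2\,dy$ with the diagonal value $(H_\alpha+z^2)^{-2}(a,a)$, and then uses the resolvent identity $\frac{d}{dz}(H_\alpha+z^2)^{-1} = -2z\,(H_\alpha+z^2)^{-2}$ at the kernel level together with the explicit formula for $(H_\alpha+z^2)^{-1}(a,a)$ to read off $\partial_z\lambda_{\alpha,z}$. Your approach via the Lagrange identity $\partial_x\bigl[pW(\psi_z,\partial_z\psi_z)\bigr]=2z\,\psi_z^2$ and the differentiation of $\lambda_{\alpha,z}=-\sqrt{p(a)}\bigl(\alpha+\psi_z'(a)/\psi_z(a)\bigr)$ is more elementary and stays purely at the ODE level, at the price of having to justify separately that $pW(\psi_z,\partial_z\psi_z)(x)\to 0$ as $x\to\infty$. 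The paper's route hides this issue inside the already-established trace-class resolvent framework (the kernel manipulations are automatically valid once $(H_\alpha+z^2)^{-1}$ is Hilbert--Schmidt), whereas your route trades that machinery for a direct asymptotic estimate on $\partial_z\psi_z$; the justification you sketch via the perturbative construction of Section~\ref{ss.Pert-Boch} is adequate for the application at hand.
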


\begin{proof} We use the formula for the Green function
\Eqref{eq.gen.green-kernel} of the operator
  $H_\ga+z^2$ and obtain for $z\ge 0$ such that $-z^2 $ is in the resolvent set
  (and still suppressing the index $z$ where appropriate)
\begin{equation}
\begin{split}
\int_{a}^{\infty} \psi_z(y)^2 dy 
&= \bl p\cdot W(\psi,\phi_{\alpha}) \br^2 
		\int_{a}^{\infty} \Bl \frac{\psi_z(y)}{p\cdot W(\psi,\phi_\alpha)} \Br^2 dy\\
	&=\bl p\cdot W(\psi,\phi_\alpha) \br^2 \cdot \phi_\alpha(a)^{-2}
		\int_{a}^{\infty} \Bl \frac{\psi_z(y) \phi_\alpha(a)}{p\cdot W(\psi,\phi_\alpha)} 
		\Br^2 dy\\
	&=\bl p\cdot W(\psi,\phi_\alpha) \br^2 \cdot \phi_\alpha(a)^{-2}
		\int_a^{\infty} \Bl (H_\alpha + z^2)^{-1}(a,y) \Br^2 dy\\
	&=\bl p\cdot W(\psi,\phi_\alpha) \br^2 \cdot \phi_\alpha(a)^{-2}
		\cdot \Bl -\frac{1}{2z} \frac{d}{dz} (H_\alpha+z^2)^{-1}(a,a) \Br\\
	&=\bl p\cdot W(\psi,\phi_\alpha) \br^2 \cdot \phi_\alpha(a)^{-2}
		\cdot (H_\alpha + z^2)^{-2}(a,a)\\
	&=\bl p\cdot W(\psi,\phi_\alpha) \br^2 \cdot \phi_\alpha(a)^{-2}
		\cdot \Bl -\frac{1}{2z} \frac{d}{dz} \frac{\phi_\alpha(a)\cdot \psi_z(a)}{p\cdot 
		W(\psi_z, \phi_\alpha)} \Br.
\end{split}
\end{equation}
The claim now follows by noting that $\phi_\ga(a) = p(a)^{-3/4}$ is independent of $z$
and by plugging in the known values for $\psi_z(a) = p(a)^{-1/4} \cdot \mu_\ga\ii$ and
$p\cdot W(\psi,\phi_\ga) = \gl_\ga / \mu_\ga $.
\end{proof}

Finally, let us do the following, a priori formal, calculation for
the relative $\zeta$--determinant of $H_\ga$ and $H_D$: 
\begin{equation}\label{eq.sec2.quotient}
\begin{split}
\log \frac{\detz H_\alpha}{\detz H_D} 
	&= -2 \regint_0^\infty z \Tr \Bl (H_\alpha+z^2)^{-1} - (H_D + z^2)^{-1} \Br dz \\
	&=  2 \regint_0^\infty z \Tr \Bl (H_D+z^2)^{-1} - (H_\alpha + z^2)^{-1} \Br dz \\
	&=  2 \regint_0^\infty z \int_{a}^{\infty} 
	\frac{\psi \cdot \phi}{p\cdot W(\psi_z ,\phi_z)}
       - \frac{\psi_z \cdot \phi_{\alpha,z}}{p\cdot W (\psi_z, \phi_{\alpha,z})} dz\\
	&= \regint_0^\infty \frac{2z}{p \cdot W(\psi_z, \phi_z)} 
       \Bl-\frac{\mu_{\alpha,z}}{\lambda_{\alpha,z}} \Br \int_{a}^{\infty} \psi(y)^2 dy dz\\
       &= - \regint_0^\infty \frac{d}{dz} \log \lambda_{\alpha,z} dz\\
       &=\log \lambda_{\alpha,z}|_{z=0} - \LIM_{z\to\infty} \log\gl_{\ga,z}.
\end{split}
\end{equation}

This calculation is valid whenever the trace under the first integral
has an asymptotic expansion as $z\to \infty$ as \Eqref{eq.gen.DetExp}.
Note that then the existence of the regularized limit $\LIM_{z\to\infty}
\log\gl_{\ga,z}$ follows automatically. In concrete situations,
as \textit{e.g.} in Section \ref{s.variation-model} below, the computation of this regularized 
limit (ideally proving that it is $0$) is a separate issue.

\mpar{make an appropriate reference where this argument is used below}
Instead of formulating a formal Theorem we record for later reference that
if the computations of this subsection are valid and if 
$\LIM_{z\to\infty} \log\gl_{\ga,z} = 0$ then by \Eqref{eq.Wronsk.sec2.5.1}, 
\eqref{eq.Wronski.sec2.5.2} and \eqref{eq.sec2.quotient} the quotient
$\detz H / p W(\psi,\phi)$ is \emph{independent} of the boundary condition at
$a$. Replacing $H$ by $H+\nu^2$ (\textit{e.g.} to ensure invertibility) we even conclude
from this calculation that the quotient
$\detz (H+\nu^2) / p W(\psi_\nu,\phi_\nu)$ is \emph{independent} of the boundary 
condition at $a$.

\section{Asymptotic expansions of modified Bessel functions}
\label{s.bessel}

In this section we present the relevant asymptotic expansions for
the modified Bessel functions of the first and second kind, which will
be employed throughout this paper. We employ the standard references
Abramowitz and Stegun \cite{AS}, Gradshteyn and Ryzhik \cite{GR},
Olver \cite{Olv} as well as Watson \cite{Wat}.  We will also refer to
Sidi and Hoggan \cite{Sidi} in Sec. \ref{ss.bessel-2}.

We begin by recalling the definitions of modified Bessel functions.
Modified Bessel functions of order $z \in \R$ are defined as a
fundamental system of solutions 
$f \in C^\infty (0,\infty)$ to the following
differential equation
\begin{equation}\label{Mod.Dif.Eq}
f''(x)+ \frac{1}{x} f'(x) - \left(1+\frac{z^2}{x^2}\right)\cdot f(x) = 0.
\end{equation} 
A fundamental system of solutions to this second order differential
equation is given in terms of the modified Bessel functions of first
and second kind
\begin{equation}
I_z(x): = \frac{x^z}{2^{z}} \sum_{k=0}^{\infty} \frac{x^{2k}}{4^k \cdot k!\cdot 
\Gamma(z + k + 1)}, \qquad
K_{z}(x):= \frac{(I_{-z}(x)-I_z(x))}{2\pi\sin(z \pi)},
\end{equation}
where $K_z(x)$ is defined for $z \not \in\Z$ and for $z \in \Z$
by the  limit
\begin{equation}
K_{z}(x):=\lim_{t \to z} \frac{I_{-t}(x)-I_t(x)}{2\pi\sin(t \pi)}.
\end{equation} 
We gather some important properties of the modified Bessel functions
in the following proposition. These properties are classical and can
be inferred from the aforementioned references Abramowitz and Stegun
\cite{AS}, Gradshteyn and Ryzhik \cite{GR}, as well as Olver
\cite{Olv}.

\begin{prop}\label{classical}\indent\par
\begin{enumerate}
\item The Wronskian of the fundamental system is given by $W(K_z,I_z)(x)=x^{-1}$. 
\item The modified Bessel functions are positive for $x>0$.
\item For $z$ fixed, $K_z(x)$ is decreasing and $I_z(x)$ is 
increasing.
\item For $x$ fixed and $z\in [0,\infty)$, $K_z(x)$ is increasing and $I_z(x)$ is 
decreasing.
\item $I_z\in L^1[0,a]$, but $I_{z}\not\in L^1[a,\infty)$, for $a>0$.
\item $K_z\not\in L^1(0,a]$, but $K_{z}\in L^1[a,\infty)$, for $a>0$.
\end{enumerate}
\end{prop}

We now begin with studying asymptotic expansions of the modified
Bessel functions. We distinguish between the following three cases:
large argument $x$ and fixed order $z$, fixed argument and large
order, as well as uniform asymptotic expansion for larger order.

\subsection{Asymptotics for large arguments and fixed order}
\label{ss.bessel-large}

We begin with an analysis of the asymptotic behavior of the Bessel
functions for fixed order $z \geq 0$ and large argument $x\to
\infty$.  We infer from \cite[(9.7.1), (9.7.2)]{AS}, see also
\cite[p.~202, \S 7.23 (1)-(2)]{Wat}, that the Bessel functions admit
the following asymptotic expansions 
\begin{align} 
&I_z(x) \sim \frac{e^x}{\sqrt{2\pi x}} \left( 1 + 
\sum_{k=1}^\infty (-1)^k A_k (z)\; x^{-k}\right), \quad x\to  \infty, 
\label{a.e.I.x.inf} \\
&K_z(x) \sim \sqrt{\frac{\pi}{2x}} e^{-x}  \left( 1 + 
\sum_{k=1}^\infty A_k (z)\; x^{-k}\right), \quad x\to  \infty. 
\label{a.e.K.x.inf}  
\end{align}
The coefficients of in the asymptotic expansions above are given explicitly by 
\begin{align}
A_k(z) &= \frac{1}{ 8^k k!}\prod_{n=1}^{k} 
\left(4z^2 - (2n-1)^2\right).
\end{align} 

\subsection{Asymptotics for fixed arguments and large order}
\label{ss.bessel-2}

For the asymptotics of modified Bessel functions for large order we
refer to Sidi and Hoggan \cite{Sidi}. Asymptotics of $I_z(x)$ also
follows from the asymptotic expansion of the (unmodified) Bessel
function \cite[(9.3.1)]{AS}.  Using the Stirling formula asymptotics
for the Gamma function, see \textit{e.g.}  \cite[(6.1.37)]{AS}, we infer from
\cite{Sidi} for $x>0$ fixed
\begin{align}
I_z(x) &\sim\frac{1}{\sqrt{2\pi z}} \left(\frac{ex}{2z}\right)^z
\left(1+\sum_{j=1}^{\infty}\frac{B_j(x)}{z^j}\right),  
\quad  z \to  \infty, \label{asymp.nu.I.xfixed} \\
K_z(x) &\sim\sqrt{\frac{\pi}{2z}} \left(\frac{ex}{2z}\right)^{-z}
\left(1+\sum_{j=1}^{\infty}(-1)^j\frac{B_j(x)}{z^j}\right), 
 \quad  z \to  \infty. \label{asymp.nu.K.xfixed}
\end{align}
The coefficients $B_j$ are polynomials in $(x/2)^2$ of degree $j\in \N$.
At several instances we will use a particular consequence of these
expansions.
\begin{equation}\label{KK}
  \frac{K_{z+1}(x)}{K_z(x)} = \frac{2 z}{x} + O(z\ii),\quad
  z\to\infty.
\end{equation}
Similar expansions hold for the derivatives just using the standard 
recurrence relations of Bessel functions, \cf
\cite[(9.6.26)]{AS}
\begin{equation}\label{recurrence}
I'_z(x) = I_{z+1}(x) + \frac{z}{x}I_z(x), \quad 
K'_z(x) = -K_{z+1}(x) + \frac{z}{x}K_z(x).
\end{equation}
Combining \Eqref{KK} and \eqref{recurrence} we find
\begin{equation}\label{eq.KnuprimeKnu}
   \frac{K_z'(x) }{K_z(x)} = -\frac {z}{x} +
   O(z\ii),\quad z \to\infty.
\end{equation}

\subsection{Uniform asymptotic expansion for large order}
\label{ss.bessel-3}

We now turn to uniform asymptotics of Bessel functions, when the
order go to infinity. Following Olver \cite[p. 377
(7.16), (7.17)]{Olv}, see also \cite[(9.7.7), (9.7.8)]{AS}, we have
for large $z>0$ and uniformly in $x > 0$ 
\begin{equation}\label{a.e.I.nu.inf}
I_z(z x) \sim \frac{e^{z \xi(x)}}{\sqrt{2\pi z} 
(1+x^2)^{\frac{1}{4}}}\left(1+\sum_{k=1}^{\infty}  
\frac{U_k(x)}{z^k} \right), \quad z \to \infty.
\end{equation}
Similarly, for the modified Bessel functions of second kind we have
for large $z>0$ and uniformly in $x > 0$ the following asymptotic
expansions
\begin{equation}\label{a.e.K.nu.inf}
K_z(z x) \sim\sqrt{\frac{2\pi}{z}} \frac{e^{-z \xi(x)}}{(1+x^2)^{\frac{1}{4}}} 
\left(1+\sum_{k=1}^{\infty} (-1)^k\frac{U_k(x)}{z^k} \right), \quad z \to \infty.
\end{equation} 
In both cases we have introduced the following notation
\begin{equation}\label{pnu}
  \begin{split}
    \xi     & = \xi(x) := \sqrt{1+x^2} + \log \frac{x}{1+\sqrt{1+x^2}}, \\
    {\rm p} & = {\rm p}(x) := \frac{1}{\sqrt{1+x^2}}.
  \end{split}
\end{equation}
The coefficients $U_k(x)$ in the asymptotic expansions above, are polynomial functions in 
${\rm p}$ of degree $3k$. 

We conclude with an asymptotic expansion for a product of Bessel
functions. Using Cauchy product formula we infer from the expansions
above
\begin{equation}\label{uniform1}
I_z(z x) K_z(z x)  =\frac{1}{2z} \frac{1}{\sqrt{1+x^2}} \left(1+ \sum_{k=1}^\infty 
\frac{\tilde U_{2k}(x)}{z^{2k}}\right),
\quad z \to \infty,
\end{equation}
where the coefficients $\tilde U_{2k}(x)$ are polynomials in ${\rm p}$ of
degree $6k$.

\section{Variation formula and the determinant of the model operator}
\label{s.variation-model}

Fix $\mu>0$ and consider the family of scalar model \emph{cusp} operators 
\begin{equation}\label{eq.cusp.op}
  D_\mu  = -\frac{d}{dx}\Bigl(x^2 \frac{d}{dx}\cdot\Bigr) + x^2\mu^2 -\frac 14: 
            C_0^\infty(a,\infty) \to  C_0^\infty(a,\infty).
\end{equation}
Let $z\geq 0$. Then a fundamental system of solutions for the second order
differential equation $(D_\mu + z^2) f = 0$ is given in terms of the modified
Bessel functions by $x^{-1/2}I_z(\mu x), x^{-1/2}K_z(\mu x)$. By
\Eqref{a.e.I.x.inf}, $x^{-1/2}I_z(\mu x)$ does not lie in $L^2[a,\infty)$.
Consequently, $\infty$ is in the limit point case for the operator $D_\mu + z^2$
and self-adjoint extensions are obtained by imposing boundary
conditions of the form \Eqref{INTRO.eq.2} at $x=a$.

Consider first the case of Dirichlet boundary conditions $R_af=f(a)$. By abuse of 
notation we use $D_\mu := D_\mu (R_a)$.  A
normalized fundamental system, \cf \Eqref{INTRO.eq.3} and Theorem
\ref{T.main}, of solutions to the equation $(D_\mu + z^2) f=0$ is then given
by
\begin{equation}\label{eq.FS-model-Dir}
\psi_{z,\mu}(x) = x^{-1/2} K_z(\mu x), \quad
\phi_{z,\mu}(x) = x^{-1/2} \bl K_z(\mu a) \cdot I_z(\mu x) - I_z(\mu a) \cdot K_z(\mu x)\br.
\end{equation} 
Note that $\psi_{z,\mu}\in L^2[a,\infty)$. Furthermore,
\begin{equation}
    \phi_{z,\mu}'(a)  = a^{-1/2} \cdot \mu\cdot W(K_z, I_z)(\mu a) = a^{-3/2}
    =   p(a)^{-3/4},
\end{equation}  
hence $\phi_{z,\mu}$ has the correct normalization according to
\Eqref{INTRO.eq.3}. The Wronskian of the modified Bessel functions satisfies
$W(K_z, I_z)(x) = \frac 1x$. Furthermore, we point out that $K_z(\mu x) > 0$
is nowhere vanishing for $x>0$ by Proposition \ref{classical}. In particular,
$D_\mu(R_a)+z^2$ is invertible.  The Wronskian of $\psi_{z,\mu},\phi_{z,\mu}$ is
given by 
\begin{equation}
  x^2\cdot W(\psi_{z,\mu}, \phi_{z,\mu}) = x \cdot\mu\cdot K_z(\mu a)\cdot W(K_z,
  I_z)(\mu x) = K_z(\mu a).
\end{equation}
Consequently, the Green function $G_z$ of $(D_\mu + z^2)\ii$ is obtained as in
\Eqref{eq.gen.green-kernel}
\begin{equation}\label{GreenFunction}
G_z(x,y) = 
  \begin{cases}
    (x y)^{-1/2} \cdot\bl I_z(\mu x)\cdot K_z(\mu y) - \frac{I_z(\mu a)}{K_z(\mu a)} K_z(\mu x)\cdot 
                 K_z(\mu y)\br,  &  x\leq y,  \\
    (x y)^{-1/2}\cdot\bl I_z(\mu y)\cdot K_z(\mu x) - \frac{I_z(\mu a)}{K_z(\mu a)} K_z(\mu y)\cdot 
                 K_z(\mu x)\br,  &  y\leq x.
   \end{cases}
\end{equation}
In particular we find for the Green function at the diagonal
\begin{equation}\label{resolvent-kernel}
G_z(x) \equiv G_z(x,x) = x^{-1} 
\bl I_z(\mu x)\cdot K_z(\mu x) - \frac{I_z(\mu a)}{K_z(\mu a)} K^2_z(\mu x) \br.
\end{equation}
The Green function $G_z(x,y)$ is continuous on $[a, \infty)^2$ and by positivity
of the solutions $\phi_{z,\mu}$ and $\psi_{z,\mu}$, the kernel is non-negative and positive 
away from $x, y = a$.  Moreover, $G_z(x) = O(x^{-2})$ as $x \to \infty$ by the 
asymptotic expansions \Eqref{a.e.I.x.inf} and \eqref{a.e.K.x.inf}. Hence $G_z$ is 
integrable 
on $[a,\infty)$ along the diagonal.
Consequently, by Mercer's theorem, as worked out \eg by Reed and Simon
\cite[\S XI.4, Lemma on p. 65]{Reed} we conclude that the resolvent 
$(D_\mu + z^2)^{-1}$ is trace class and the trace of $(D_\mu + z^2)^{-1}$ 
is given by  
\begin{equation}
 \Tr\bl D_\mu + z^2\br^{-1} = \int_a^\infty G_z(x) dx.
\end{equation}
A similar argument holds in case of generalized Neumann boundary conditions
$R_a = f'(a)+\alpha f(a)$. In that case the solution $\phi_{z,\mu,\ga}$, satisfying
the generalized Neumann boundary conditions, is given by 
\begin{equation}\label{eq.FS-model-Neu}
  \phi_{z,\mu,\ga}(x) = c_{z,\mu}\cdot x^{-\frac12}\cdot \left(
     I_z(\mu x) -  \frac{%
       \bl\alpha-\frac{1}{2 a}\br\cdot I_z(\mu a)+ \mu \cdot  I'_z(\mu a)}{%
     \bl \alpha-\frac{1}{2 a}\br \cdot K_z(\mu a)+\mu  \cdot K'_z(\mu a)}
   \cdot K_z(\mu x)
   \right).
\end{equation} 
The constant $c_{z,\mu}$ is determined by the normalization requirement
\Eqref{INTRO.eq.3}, \ie $\phi_{z,\mu}(a) = p(a)^{-1/4} = a^{-1/2}$.  We construct
the Green function as before.  But now $G_z$ is positive only if either
$\alpha \in \left[0,\frac{1}{2}\right]$ or if $\alpha>\frac{1}{2}$ and $\mu$
sufficiently large. In these two cases we can use Mercer's Theorem as before
to obtain $\Tr(D_\mu (R_a) + z^2)^{-1}$ integrating the Green function $G_z$ along the 
diagonal.
If $G_z$ is not necessarily positive, the trace class property 
of $(D_\mu(R_a) + z^2)^{-1}$ follows by a well--known comparison 
principle for elliptic operators, \cf \textit{e.g.,} \cite[Chapter 3]{LMP:CCC}
and \cite{LesVer}. 

\begin{prop}\label{p:Comparison}
Consider Dirichlet boundary conditions $R_{a,1}(f)=f(a)$ and generalized Neumann
boundary conditions $R_{a,2}(f)=f'(a)+\alpha f(a)$ for the model operator $D_\mu$.
Let $D_{\mu, 1}$ and $D_{\mu, 2}$ denote the corresponding self-adjoint
realizations in $L^2[a,\infty)$ with boundary conditions $R_{a,1}$ and $R_{a,2}$,
respectively. Then
\begin{equation}
\bigl\| (D_{\mu, 1} + z^2)^{-1} - (D_{\mu, 2} + z^2)^{-1}\bigr\|_{\tr} = O(z^{-2} \log 
z), 
\quad \text{as} \quad z \to \infty.
\end{equation}
\end{prop}

\begin{proof}
Fix any $\delta > \delta' > a$. 
We choose cut-off functions $\chi_1, \chi_2\in C^{\infty}_0[a,\delta)$, 
as illustrated in Figure \ref{fig:CutOff}, 
such that they are identically one over $[a,\delta']$ and moreover, 
\begin{itemize}
\item $\supp(\chi_1)\subset \supp(\chi_2)$,
\item $\supp(\chi_1)\cap \supp(d\chi_2)=\emptyset$.
\end{itemize}

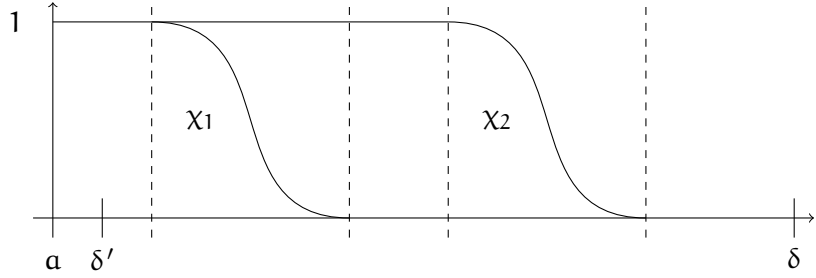
\begin{figure}[h]
\begin{center}

\begin{tikzpicture}[scale=1.3]
\draw[->] (-0.2,0) -- (7.7,0);
\draw[->] (0,-0.2) -- (0,2.2);

\draw (-0.2,2) node[anchor=east] {$1$};
\draw (0.5,-0.2) node[anchor=north] {$\delta'$} -- (0.5,0.2);
\draw (7.5,-0.2) node[anchor=north] {$\delta$} -- (7.5,0.2);
\draw (0,-0.45) node {$a$};
\draw (0,2) -- (4,2);
\draw (1,2) .. controls (2.4,2) and (1.6,0) .. (3,0);
\draw[dashed] (1,-0.2) -- (1,2.2);
\draw[dashed] (3,-0.2) -- (3,2.2);
\draw (4,2) .. controls (5.4,2) and (4.6,0) .. (6,0);
\draw[dashed] (4,-0.2) -- (4,2.2);
\draw[dashed] (6,-0.2) -- (6,2.2);

\draw (1.5,1) node {$\chi_1$};
\draw (4.5,1) node {$\chi_2$};

\end{tikzpicture}

\caption{The cutoff functions $\chi_1$ and $\chi_2$.}
\label{fig:CutOff}
\end{center}
\end{figure}

We write $\eta_1:=1-\chi_1$, $\eta_2:= 1- \chi_2$. 
By construction $\eta_1\eta_2=\eta_2$. We consider 
\begin{equation}
R(z):=\eta_1 \bigl[(D_{\mu, 1}+z^2)^{-1}-(D_{\mu, 2}+z^2)^{-1}\bigr]\eta_2 .
\end{equation}
$R(z)$ maps into the domain of both $D_{\mu, 1}$ and $D_{\mu, 2}$. 
On the support of $\eta_1 $ the differential expressions $D_{\mu, 1}$ 
and $D_{\mu, 2}$ coincide and moreover $\eta_1  \dom(D_{\mu, 1})=\eta_1 \dom(D_{\mu, 
2})$. 
Thus we may compute 
\begin{align}
(D_{\mu, 1}+z^2)R(z)=[D_{\mu, 1},\eta_1 ]\bigl((D_{\mu, 1}+z^2)^{-1}-(D_{\mu, 
2}+z^2)^{-1}\bigr).
\end{align}
Arguing similarly for $R(z)^*$ and taking adjoints one then finds
\begin{equation}
(D_{\mu, 1}+z^2)R(z)(D_{\mu, 2}+z^2)=[-\partial_x^2, \eta_1 ]
\bigl((D_{\mu, 1}+z^2)^{-1}-(D_{\mu, 2}+z^2)^{-1}\bigr)[\partial_x^2,\eta_2 ],
\end{equation}
where $[\cdot, \cdot]$ denotes the commutator of the corresponding operators
and any function is viewed as a multiplication operator.  Hence
\begin{equation}
R(z)=(D_{\mu, 1}+z^2)^{-1}[-\partial_x^2,\eta_1 ]
\bigl((D_{\mu, 1}+z^2)^{-1}-(D_{\mu, 2}+z^2)^{-1}\bigr) 
[\partial_x^2,\eta_2 ](D_{\mu, 2}+z^2)^{-1}.
\end{equation}
Since $(D_{\mu, 1}+z^2)^{-1}$ is trace class by Mercer's theorem as explained
above, and since the space of trace class operators forms an ideal in the
space of bounded operators, we conclude that $R(z)$ is trace class as well and
continue with the following estimate
\begin{equation}
\begin{split}
\|R(z)\|_{\tr}\leq \|(D_{\mu, 1} + z^2)^{-1}\|_{\tr}\Bigl(\|[\partial_x^2,\eta_1 ](D_{\mu, 
1}+z^2)^{-1}\|+\|[\partial_x^2,\eta_1 ](D_{\mu, 2}+z^2)^{-1}\|\Bigr)\cdot\\
                         \cdot \|[\partial_x^2,\eta_2 ](D_{\mu, 2}+z^2)^{-1}\|.
\end{split}
\end{equation} 
By \Eqref{eq.INTRO.resolvent.model}, we have 
$\|(D_{\mu, 1}+z^2)^{-1}\|_{\tr}=O(z^{-1}\log z)$. Let $f$ denote $\eta_1 $ or
$\eta_2 $. Then $[\partial_x^2, f]$ is a first order differential operator
whose coefficients are compactly supported in $(a,\delta)$, hence it maps the
Sobolev space $H^1[a,\delta]$ continuously into $L^2_{\comp}(a,\delta)$.
Therefore we conclude for $j=1,2$ with a cut--off function $\chi \in
C^{\infty}_0(a,\delta)$ with $\chi=1$ in a neighborhood of 
$\supp ([\pl_x^2,f])$,
\begin{equation}
\|[\partial_x^2,f](D_{\mu, j}+z^2)^{-1}\|\le \|[\pl_x^2,f]\|_{H^1\to L^2}
\|\chi (D_{\mu, j}+z^2)\ii\|_{L^2\to H^1} =O(z^{-1}),
\end{equation}
as $z\to \infty$. Hence for $j=1,2$ the operator norms 
$\|[\partial_x^2,\eta_1 ](D_{\mu, j}+z^2)^{-1}\|$ and 
$\|[\partial_x^2,\eta_2 ](D_{\mu, j}+z^2)^{-1}\|$ behave 
as $O(z^{-1})$ as $z\to \infty$. This proves  
\begin{equation}
\bigl\| \left.\left((D_{\mu, 1} + z^2)^{-1} - (D_{\mu, 2} + z^2)^{-1}
\right)\right|_{L^2(\delta, \infty)}\bigr\|_{\tr} = O(z^{-2}\log z), 
\quad \text{as}\quad z \to \infty.
\end{equation}
Note that the Schwartz integral kernel of $(D_{\mu, 2} + z^2)^{-1}$ is smooth
at the diagonal $[a,\infty)$ and hence by continuity is strictly positive (or
strictly negative) over $[a,\delta]$ for $(\delta-a)>0$ sufficiently small.
By Mercer's theorem, $(D_{\mu, 2} + z^2)^{-1}$ is trace class in
$L^2(a,\delta)$.

The statement now follows from the fact that integrals of the Schwartz kernels
for $(D_{\mu, 1} + z^2)^{-1}$ and $(D_{\mu, 2} + z^2)^{-1}$ along the diagonal
in $[1,\delta]$ admit an asymptotic expansion of the form $\sum_{k=0}^{\infty}
a_k(\mu) z^{-1-k}$, where the leading order term $a_0$ is independent of the
boundary conditions.  
\end{proof}

The next proposition is proved in \cite{Ver} without specifying the leading
coefficients in the asymptotic expansion. By keeping track of the coefficients
in the asymptotic expansions of Sec.
\ref{ss.bessel-3} we obtain the following more precise statement.

\begin{prop}\label{Asympt.Dt} For any boundary condition of the form
\Eqref{INTRO.eq.2} at $a$ the corresponding self-adjoint realization
$D_\mu(R_a)$ of $D_\mu$ admits  an asymptotic expansion of the resolvent
trace
\begin{equation}\label{eq.resolvent}
  \Tr (D_\mu(R_a) + z^2)^{-1} \sim \sum_{k=0}^{\infty} a_k(\mu)\cdot z^{-1-k}
+ \sum_{k=0}^{\infty} b_{2k}(\mu) \cdot z^{-1-2k}\cdot\log z,
\quad \textup{as} \quad z\to\infty,
\end{equation} 
where $a_0(\mu) =\frac{1}{2}\log \frac{2}{\mu a}$ and $b_{0}(\mu) =\frac{1}{2}$
independent of the choice of boundary conditions at $a$. Moreover, for
Dirichlet boundary conditions $a_1(\mu) = \frac{1}{4}$, while for generalized
Neumann boundary conditions $a_1(\mu) = -\frac{1}{4}$.
\end{prop}

Note that the asymptotic expansion \Eqref{eq.resolvent} does not admit terms of the form
$z^{-2}\log^k(z), k\in \N$. The zeta-regularized determinant of $D_\mu(R_a)+\nu^2$, for 
any $\nu \geq 0$, is therefore defined according to Sec. \ref{ss.gen.ZetaDeterminant}.

The following variation formula is due to the third named author
\cite[Proposition 5.1 and Theorem 5.2]{Ver}. We present it here with precise
formulae for solutions and the normalizing constants.
Theorem \ref{W-variation-thm}, for which we will give a complete proof,
contains the following as a special case.

\begin{theorem}\label{Theo.Vert} Fix a boundary condition $R_a$ for the model
operator $D_\mu$ at $a$. For $\nu\geq 0$, let 
$\psi_{\nu,\mu} (x) = x^{-\frac{1}{2}}K_\nu(\mu x)$ and $\phi_{\nu,\mu}(x) $
be a normalized fundamental system of  solutions to $(D_\mu + \nu^2) f = 0$,
\cf \Eqref{INTRO.eq.2}, \eqref{eq.FS-model-Dir}, \eqref{eq.FS-model-Neu}.

\mpar{$c_{\nu,\mu}$ is explicit, do it}
Assume that the null space of $(D_\mu + \nu^2)$ is trivial. Then the
zeta-regularized determinant of $(D_\mu + \nu^2)$ is differentiable in $\nu$
and satisfies the following variational formula
\begin{equation}\label{Var.form.}
\frac{d}{d\nu} \log\detz (D_\mu + \nu^2) = \frac{d}{d\nu} \log \bl x^2\cdot  
W(\psi_{\nu,\mu}(x),\phi_{\nu,\mu}(x))\br.
\end{equation}
\end{theorem}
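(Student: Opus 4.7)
The plan is to match $\partial_\nu \log\detz(D_\mu+\nu^2)$ computed in two ways: directly from the spectral $\zeta$--function, and indirectly via Proposition \ref{GEN-Trace-Formula} applied to the one-parameter family $V_{0,\nu} := x^2\mu^2 - \tfrac14 + \nu^2$ for which $\dot V_{0,\nu} = 2\nu$ is a bounded multiplication operator.

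\emph{Spectral side.} In the region of convergence the identity $\partial_\nu \zeta_{D_\mu+\nu^2}(s) = -2\nu s\,\zeta_{D_\mu+\nu^2}(s+1)$ follows from termwise differentiation of $\sum_\lambda(\lambda+\nu^2)^{-s}$, and extends by meromorphic continuation. By Proposition \ref{Asympt.Dt} and the resolvent expansion \Eqref{eq.resolvent}, $\zeta_{D_\mu+\nu^2}(s+1)$ is regular at $s=0$ with value $\Tr((D_\mu+\nu^2)^{-1})$. Differentiating in $s$ at $s=0$ and using $\log\detz = -\zeta'(0)$ I obtain
\begin{equation}
\partial_\nu \log\detz(D_\mu+\nu^2) = 2\nu\,\Tr\bigl((D_\mu+\nu^2)^{-1}\bigr).
\end{equation}

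\emph{Wronskian side.} Proposition \ref{GEN-Trace-Formula} applies once the following hypotheses are verified: (i) $(D_\mu+\nu^2)^{-1}$ is trace class, which holds by Mercer's theorem for the Dirichlet case and by Proposition \ref{p:Comparison} in the generalized Neumann case; (ii) the solutions $\phi_{\nu,\mu}, \psi_{\nu,\mu}$ depend smoothly on $\nu$, immediate from the explicit formulas \Eqref{eq.FS-model-Dir}, \eqref{eq.FS-model-Neu} and the analyticity of $I_\nu, K_\nu$ in the order; and (iii) the vanishing
\begin{equation}
\lim_{x\to\infty} x^2\,W(\phi_{\nu,\mu},\dot\psi_{\nu,\mu})(x) = 0.
\end{equation}
Granted (iii), the proposition yields $2\nu\,\Tr((D_\mu+\nu^2)^{-1}) = \partial_\nu\log\bigl(x^2\cdot W(\psi_{\nu,\mu},\phi_{\nu,\mu})\bigr)$, and matching with the spectral side gives the formula.

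\emph{Main obstacle.} Verifying (iii) is the real work. The leading coefficient in the large-argument expansion \Eqref{a.e.K.x.inf} of $K_\nu(\mu x)$ equals $1$, independent of $\nu$, so termwise differentiation gains an extra factor of $x^{-1}$: one finds $\partial_\nu K_\nu(\mu x) = O(e^{-\mu x}\,x^{-3/2})$ and $\partial_x\partial_\nu K_\nu(\mu x) = O(\mu\,e^{-\mu x}\,x^{-3/2})$, so $\dot\psi_{\nu,\mu}(x), \dot\psi_{\nu,\mu}'(x) = O(e^{-\mu x}\,x^{-2})$. In contrast $\phi_{\nu,\mu}$ is dominated by its $x^{-1/2}I_\nu(\mu x)$ component, of size $e^{\mu x}/\sqrt{x}$ with derivative of the same order. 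Multiplying out and using cancellation of the exponential factors in the Wronskian, I get $x^2\,W(\phi_{\nu,\mu},\dot\psi_{\nu,\mu})(x) = O(x^{-1/2}) \to 0$; the subleading $K_\nu$ contribution to $\phi_{\nu,\mu}$ only adds an exponentially small correction. This establishes (iii) and completes the argument; differentiability of $\nu\mapsto \log\detz(D_\mu+\nu^2)$ is inherited from the manifest smoothness in $\nu$ of the Wronskian on the right-hand side.
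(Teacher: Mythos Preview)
Your argument is correct and arrives at the same intermediate identity $\partial_\nu\log\detz(D_\mu+\nu^2)=2\nu\,\Tr((D_\mu+\nu^2)^{-1})$ as the paper, but by a different and more direct route. The paper does not prove Theorem~\ref{Theo.Vert} separately; instead it deduces it as the special case $W_t=t^2X^{-3/2}$, $\eps=\tfrac12$, $\nu=0$ of the general variation Theorem~\ref{W-variation-thm}, whose proof obtains the spectral identity by differentiating the integral representation $\log\detz(H_t+\nu^2)=-2\int_\nu^\infty z\,\Tr(H_t+z^2)^{-1}dz$ under the integral sign (justified via the Neumann series estimates of Lemma~\ref{lemma2} and Theorem~\ref{Theo.Pert.Trace}), and then invokes Corollary~\ref{Wron.Corol.3} for the vanishing of $x^2W(\phi,\dot\psi)$ at infinity. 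Your shortcut $\partial_\nu\zeta(s)=-2\nu s\,\zeta(s+1)$ plus regularity of $\zeta$ at $s=1$ bypasses the perturbative machinery entirely, and your verification of the Wronskian limit via the explicit Bessel asymptotics is more hands-on than the paper's B\^ocher-type argument. The trade-off is clear: your approach is lighter for the model operator but does not extend to the perturbed case, which is precisely why the paper routes everything through Theorem~\ref{W-variation-thm}.

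One point deserves a sentence of care: termwise differentiation in $\nu$ of the expansion~\eqref{a.e.K.x.inf} is not automatic; you need that the remainder is locally uniform in $\nu$ (which it is, by classical results or via Cauchy's formula applied to the analytic dependence of $K_\nu$ on the order). Also, your size estimate for $\phi_{\nu,\mu}$ is off by a factor $x^{-1/2}$ (it is $O(e^{\mu x}/x)$, not $O(e^{\mu x}/\sqrt{x})$), though this only strengthens the conclusion $x^2W(\phi,\dot\psi)=O(x^{-1})\to0$.
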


Now we are already in a position to compute the zeta-regularized determinant
for the model operator $D_\mu$ for any boundary condtion,
\cf \eg \cite[Thm.~3.3]{LesTol:DOD}.
\begin{theorem} \label{T.ModelDetFormula} Fix a boundary condition
  for the model operator $D_\mu$. Then for the zeta-regularized determinant
  of $D_\mu(R_a)+\nu^2$ we have
\begin{equation}
  \detz (D_\mu(R_a) +\nu^2 )= \sqrt{\frac 2\pi } \cdot a^2\cdot W(\psi_\nu, 
  \phi_{\nu,R_a})(a).
\end{equation}  
Here, $\phi_{\nu,R_a}, \psi_\nu$ is a normalized fundamental system of solutions
to the equation $(D_\mu+\nu^2) f = 0$, $\psi_\nu(x) = x^{-1/2} K_\nu(\mu x)$ and
$\phi_{\nu, R_a}$ is given in \Eqref{eq.FS-model-Dir} (Dirichlet) resp.
\Eqref{eq.FS-model-Neu} (Neumann).
\end{theorem}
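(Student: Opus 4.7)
The plan is to integrate the variation formula in Theorem \ref{Theo.Vert} with respect to $\nu$, determining the resulting integration constant via a regularized limit as $\nu \to \infty$. By Theorem \ref{Theo.Vert}, the difference
\begin{equation*}
  F(\nu) := \log\detz\bl D_\mu(R_a)+\nu^2\br - \log\bl a^2 \cdot W(\psi_\nu,\phi_{\nu,R_a})(a)\br
\end{equation*}
is constant in $\nu$ (recall $x^2 W(\psi_\nu,\phi_{\nu,R_a})$ is independent of $x$, so its value at any point equals its value at $a$). To identify this constant I would apply $\LIM_{\nu\to\infty}$ to both summands separately. For the determinant, Lemma \ref{L.gen.asymptotic-det} combined with Proposition \ref{Asympt.Dt} yields an asymptotic expansion of $\log\detz(D_\mu(R_a)+z)$ as $z\to\infty$ whose constant term vanishes; substituting $z=\nu^2$ preserves the absence of a $\nu^0$-term, hence $\LIM_{\nu\to\infty}\log\detz(D_\mu(R_a)+\nu^2)=0$. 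Consequently
\begin{equation*}
  F(\nu) = -\LIM_{\nu\to\infty} \log\bl a^2\cdot W(\psi_\nu,\phi_{\nu,R_a})(a)\br.
\end{equation*}

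Next I would carry out the computation of this regularized limit in the Dirichlet case, where \Eqref{eq.FS-model-Dir} and $W(K_\nu,I_\nu)(x)=1/x$ give $a^2 \cdot W(\psi_\nu,\phi_\nu)(a) = K_\nu(\mu a)$. Inserting the large-order asymptotic \Eqref{asymp.nu.K.xfixed},
\begin{equation*}
  K_\nu(\mu a) \sim \sqrt{\frac{\pi}{2\nu}}\left(\frac{e\mu a}{2\nu}\right)^{-\nu}\Bl 1+O(\nu\ii)\Br,
\end{equation*}
and taking logarithms produces an expansion of the form \Eqref{eq.gen.reg-limit} in $\nu$ containing the terms $\nu\log\nu$, $\nu\cdot(\log 2-1-\log(\mu a))$, $-\tfrac 12\log\nu$, plus the constant $\tfrac 12\log(\pi/2)$ plus $o(1)$. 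None of the polynomial exponents $\ga_j$ coincide, and crucially the constant term is $\tfrac 12\log(\pi/2)$. Thus $\LIM_{\nu\to\infty}\log K_\nu(\mu a) = \tfrac 12\log(\pi/2)$, so $F(\nu)\equiv \tfrac 12\log(2/\pi) = \log\sqrt{2/\pi}$, establishing the formula for Dirichlet boundary conditions.

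For generalized Neumann conditions the cleanest approach is to invoke the comparison discussion at the end of Section \ref{ss.comparison}, which shows that the quotient $\detz(H+\nu^2)\big/\bl p\cdot W(\psi_\nu,\phi_\nu)\br$ is \emph{independent} of the boundary condition at $a$, provided the associated regularized limit $\LIM_{z\to\infty}\log\gl_{\ga,z}$ vanishes. The hypotheses needed for this argument are available: Proposition \ref{p:Comparison} gives the required trace-norm estimate on the difference of resolvents, and the existence of the asymptotic expansion \Eqref{eq.gen.DetExp} for both extensions is provided by Proposition \ref{Asympt.Dt}, which forces the vanishing of the relevant regularized limit automatically. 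Once this is in hand, the Dirichlet computation transports verbatim to the generalized Neumann case, and Theorem \ref{T.ModelDetFormula} follows for both boundary conditions.

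The main technical obstacle is the careful justification of $\LIM_{\nu\to\infty}\log\detz(D_\mu(R_a)+\nu^2)=0$ under the substitution $z\mapsto\nu^2$, together with the bookkeeping of Bessel asymptotic coefficients to isolate the constant term $\tfrac 12\log(\pi/2)$; one must verify that no hidden $\nu^0\log^k\nu$ contributions appear from the subleading terms $B_j(\mu a)/\nu^j$. As a safety check one may also verify the generalized Neumann case directly from \Eqref{eq.FS-model-Neu} using \Eqref{eq.KnuprimeKnu}, which shows that $\phi_{\nu,\mu,\ga}$ asymptotically reduces to the Dirichlet solution up to a normalization whose logarithm has vanishing $\LIM$, yielding the same constant $\log\sqrt{2/\pi}$ and cross-validating the comparison argument.
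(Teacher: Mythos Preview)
Your Dirichlet computation is exactly the paper's: integrate the variation formula, use Lemma \ref{L.gen.asymptotic-det} to get $\LIM_{\nu\to\infty}\log\detz(D_\mu+\nu^2)=0$, then read off $\LIM_{\nu\to\infty}\log K_\nu(\mu a)=\log\sqrt{\pi/2}$ from \Eqref{asymp.nu.K.xfixed}.

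For the generalized Neumann case your primary argument has a gap. You appeal to the discussion at the end of Section~\ref{ss.comparison}, which requires $\LIM_{z\to\infty}\log\gl_{\ga,z}=0$, and you assert this is ``forced automatically'' by Proposition~\ref{Asympt.Dt}. It is not. The paper is explicit on this point: from the trace expansion one only gets the \emph{existence} of $\LIM_{z\to\infty}\log\gl_{\ga,z}$; its \emph{vanishing} is, in the paper's words, ``a separate issue'' to be checked in each concrete situation. Knowing the resolvent expansions for both boundary conditions tells you the difference $F_\ga-F_D$ equals $-\LIM_{\nu\to\infty}\log\gl_{\ga,\nu}$, but nothing yet forces that number to be zero.

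What actually closes the gap is precisely your ``safety check,'' and that is exactly what the paper does as its main argument for the Neumann case: it computes the ratio of Wronskians directly,
\[
  \frac{a^2\,W(\psi_z,\phi_{z,\ga})}{a^2\,W(\psi_z,\phi_z)} = \gl_\ga
  = -\bl \ga a-\tfrac12\br - \mu a\,\frac{K_z'(\mu a)}{K_z(\mu a)}
  = z\bl 1+O(z\ii)\br,
\]
using \Eqref{eq.KnuprimeKnu}, whence $\log\gl_\ga=\log z+O(z\ii)$ and $\LIM_{z\to\infty}\log\gl_\ga=0$. Promote your safety check to the actual argument and the proof is complete and coincides with the paper's.
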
  

\begin{proof}
From the previous Theorem and Lemma \ref{L.gen.asymptotic-det} we infer
\begin{equation}
  \detz (D_\mu(R_a)+\nu^2) = a^2\cdot W(\psi_{\nu},\phi_{\nu,R_a}) \cdot
  \exp\Bl -\LIM_{z\to\infty} \log\bl a^2\cdot W(\psi_z, \phi_{z,R_a}) \br\Br.
\end{equation}     
It therefore remains to compute the $\LIM$ in the exponential function
on the right. Let us first look at the case of Dirichlet boundary conditions.
Let $\phi_z = \phi_{z,R_a}$ for $R_a f = f(a)$. Then
\begin{equation}
a^2 \cdot W(\psi_{z},\phi_{z})(a) = a^2 \cdot \psi_{z} (a)\cdot 
	\phi_{z}'(a) = K_z(\mu  a),
\end{equation}
since $\phi_z$ is normalized to $\phi_z'(a) = a^{-3/2}$.
Using the asymptotic expansion \Eqref{asymp.nu.K.xfixed}, we obtain
\begin{equation}
  \log  K_z(\mu a) = \log \sqrt{\frac{\pi}{2}} - 
	\frac{1}{2} \log z - z \log \Bl \frac{e \mu a}{2z } \Br + \log \Bl 1 + 
	O(z^{-1}) \Br,\;{\text{as}}\;z\to\infty,
\end{equation} 
hence $\LIM\limits_{z\to\infty} \log \bl K_z(\mu a) \br = \log \sqrt{\frac \pi 2}$
and the result follows.

Next consider a generalized Neumann boundary condition $R_a f = f'(a) + \ga
f(a)$ and denote by $\phi_{z,\ga}$ the corresponding normalized solution
satisfying $R_a \phi_{z,\ga} = 0$. Then by \Eqref{eq.DirNeu1},
\eqref{eq.DirNeu2} 
\begin{equation}
	\begin{split}
  	\frac{a^2 \cdot W(\psi_{z},\phi_{z,\alpha})}{a^2\cdot W(\psi_{z},\phi_{z})} 
  	& = \lambda_\alpha = - a \cdot \frac{R_a\psi_{z}}{\psi_{z}(a)}
	   = - a^{\frac 12} \cdot \frac{(\alpha - \frac{1}{2a})~K_z(\mu a) + \mu~K'_z(\mu 
		a)}{a^{-\frac 12}K_z(\mu a)}\\
	  & =-\bl \alpha a - \frac 12\br - \mu a \cdot \frac{K'_z(\mu a)}{K_z(\mu
  a)} \\
    & = z + O(1) = z \cdot \bl 1+ O(z\ii)\br,\quad\text{ as } z\to \infty.
	\end{split}
\end{equation}
The last line follows from \Eqref{eq.KnuprimeKnu}. Taking log on both sides
yields
\begin{equation}
	\log \Bl\frac{a^2 \cdot W(\psi_{z},\phi_{z,\alpha})}{%
    a^2\cdot W(\psi_{z},\phi_{z})} \Br 
    =	\log z + 	O(z^{-1}),\;\text{ as } z\to \infty,
	\end{equation}
consequently the regularized limit for the Neumann boundary condition
equals that for the Dirichlet boundary condition, \ie
\begin{equation*}
	\LIM_{z\to \infty} \log \bl a^2 \cdot W(\psi_{z},\phi_{z,\alpha})
  \br
  =\LIM_{z\to \infty}	\log\bl a^2\cdot W(\psi_{z},\phi_{z}) \br
  =\log\sqrt{\frac\pi2}.\qedhere
\end{equation*}
\end{proof}

\section{B{\^o}cher theorem for operators with quadratic potentials at infinity}
\label{Bocher}

In this section we will prove a version of the B\^ocher's Theorem for
perturbations of the model cusp operator \Eqref{eq.cusp.op} and we analyse the 
Wronskian's behavior at infinity of a perturbed fundamental system of solution. 

Recall that
$\psi_z(x) = x^{-\frac{1}{2}}K_z(\mu x)$, $\phi_z(x) = x^{-\frac{1}{2}}I_z(\mu x)$
is a fundamental system of solutions to the equation $(D_\mu +z^2) f = 0$ 
with Wronskian
\begin{equation}
x^2\cdot W\bl \psi_z, \phi_z \br(x) = 1.
\end{equation} 
In the notation of Section \ref{s.generalities} we have $p(x) = x^2$.
We will specialize the result of Section \ref{ss.Pert-Boch} to
\Eqref{eq.cusp.op}. For this we need to verify the conditions \Eqref{Assump1}
- \eqref{Assump5}.

\begin{lemma}\label{lemmaLt}
For the operator \Eqref{eq.cusp.op} we have, \cf \Eqref{kernelL},
\begin{equation}\label{VolterraKernel}
		L(x,y) := y\cdot  K_z^2(\mu y) \left[\frac{I_z(\mu y)}{K_z(\mu y)} - 
		\frac{I_z(\mu x)}{K_z(\mu x)}\right],
	\end{equation} 
for $a\leq  x \leq y<\infty$. Furthermore,
\begin{equation}
	\sup_{a\leq x\leq y<\infty}| L (x,y)| \leq C(\mu).
\end{equation}
\end{lemma}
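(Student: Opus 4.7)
The plan is to treat the two assertions separately, but both reduce to elementary manipulations once the right monotonicity observation is in place.

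For the first claim, I would simply substitute the explicit fundamental system into the general formula \Eqref{kernelL}. Taking $p(y)=y^{2}$ and
\[
\psi(y)=y^{-1/2}K_z(\mu y),\qquad \phi(y)=y^{-1/2}I_z(\mu y),
\]
the $y^{-1/2}$ factors cancel in the ratio $\phi(y)/\psi(y)=I_z(\mu y)/K_z(\mu y)$, while $p(y)\psi(y)^{2}=y\cdot K_z^{2}(\mu y)$. Plugging these into \Eqref{kernelL} yields \Eqref{VolterraKernel} at once.

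For the uniform bound I would introduce the auxiliary function $F(t):=I_z(\mu t)/K_z(\mu t)$ and use Proposition \ref{classical}: for $t>0$ both $I_z(\mu t)$ and $K_z(\mu t)$ are strictly positive, with $I_z$ strictly increasing and $K_z$ strictly decreasing, so $F$ is strictly positive and strictly increasing. Consequently $F(y)-F(x)\ge 0$ whenever $a\le x\le y$, so $L(x,y)\ge 0$, and discarding the non-negative term $y\cdot K_z^{2}(\mu y)\cdot F(x)$ produces the pointwise estimate
\[
0\le L(x,y)\le y\cdot K_z^{2}(\mu y)\cdot F(y)=y\cdot I_z(\mu y)\cdot K_z(\mu y).
\]
This is the key step: it collapses the two-variable sup over the noncompact cone $\{a\le x\le y\}$ into a one-variable bound.

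It then remains to bound $g(y):=y\cdot I_z(\mu y)\cdot K_z(\mu y)$ uniformly on $[a,\infty)$. The function $g$ is continuous and positive on $[a,\infty)$, and multiplying the large-argument asymptotics \Eqref{a.e.I.x.inf} and \Eqref{a.e.K.x.inf} from Section \ref{ss.bessel-large} yields $I_z(\mu y)K_z(\mu y)=(2\mu y)^{-1}\bigl(1+O(y^{-1})\bigr)$, hence $g(y)\to 1/(2\mu)$ as $y\to\infty$. Combined with continuity on the compact complement this gives a finite bound $C(\mu)$ (implicitly depending also on $a$ and the fixed order $z$), which completes the proof. There is no real obstacle here; the only point worth emphasising is that it is the monotonicity of the single quantity $F$, rather than any separate control of the two terms inside the bracket, that makes the uniform estimate clean.
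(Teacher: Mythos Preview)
Your proof is correct and follows essentially the same route as the paper: both use the monotonicity of $I_z/K_z$ to reduce $|L(x,y)|$ to the one-variable quantity $y\,I_z(\mu y)K_z(\mu y)$, then invoke the large-argument asymptotics \Eqref{a.e.I.x.inf}, \eqref{a.e.K.x.inf} together with continuity on a compact set. The only cosmetic difference is that you also spell out the substitution of the explicit fundamental system into \Eqref{kernelL} to obtain \Eqref{VolterraKernel}, which the paper leaves implicit.
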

\begin{proof} This follows from the asymptotic expansion \Eqref{a.e.I.x.inf}
and \eqref{a.e.K.x.inf}. Namely, choose $y_0$ such that for $y\geq y_0$,
\begin{equation}	
	K_z(\mu y) \leq 2 \cdot \sqrt{\frac{\pi}{2\mu y}} e^{-\mu y},\qquad  I_z(\mu y) \leq 
	2 \cdot \frac{1}{\sqrt{2\pi\mu y}} e^{\mu y}.
\end{equation}
	Since $x\mapsto \frac{I_z(\mu x)}{K_z(\mu x)}$ is an increasing function we then have 
	for all $a\leq x \leq y$ and $y\geq y_0$
	\begin{equation}
	\bigl|L(x,y)\bigr| = y K_z^2(\mu y) \left[\frac{I_z(\mu y)}{K_z(\mu y)} - 
	\frac{I_z(\mu x)}{K_z(\mu x)}\right]
		\leq y K_z(\mu y) I_z(\mu y) 
			\leq \frac{2}{\mu}.
	\end{equation}
	Since $L$ is certainly continuous, it is bounded on the compact set $a\leq x \leq y 
	\leq y_0$ and the claim follows.
\end{proof}

\begin{theorem}\label{T.Wronsk} Let
\begin{equation}
		H = D_\mu  + X^{2}\cdot W,
\end{equation}
with $W\in  L^1[a,\infty)$ and fix $z\geq 0$. Then the differential 
equation $(H+z^2)f=0$ has a fundamental system of solutions $h_1,h_2$, such that
\begin{equation}\label{eq.h12}
			h_1(x) = \psi_z(x)\cdot g_1(x),\qquad	h_2(x) =\phi_z(x) \cdot g_2(x),
\end{equation} 
with $ g_j \in C_b[a,\infty)$ and 
$\lim\limits_{x\to\infty}g_j(x) =1$, $j=1,2$. 
Furthermore,
\begin{equation}\label{eq.h'12}
			h'_1(x) = \psi'_z(x) \cdot \tilde{g}_1(x), \qquad 
      h'_2(x) = \phi'_z(x) \cdot \tilde{g}_2(x),
\end{equation} 
where $\tilde{g}_j \in C_b[a,\infty)$, 
$\lim\limits_{x\to\infty}\tilde{g}_j(x)  =1$, $j=1,2$, 
and $x^2\cdot W(h_1,h_2)(x) = 1$.
\end{theorem}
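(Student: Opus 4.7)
The plan is to deduce Theorem \ref{T.Wronsk} directly from the abstract B\^ocher-type theorem at the end of Section \ref{ss.Pert-Boch}. Concretely, I will take $H_0 := D_\mu + z^2$, weight $p(x) = x^2$, and the fundamental system $\psi := \psi_z(x) = x^{-1/2} K_z(\mu x)$, $\phi := \phi_z(x) = x^{-1/2} I_z(\mu x)$ for $H_0 f = 0$, whose Wronskian is already normalized to $x^2 W(\psi_z,\phi_z) = 1$. The perturbation is $V = X^2 \cdot W = p \cdot W$, so the hypothesis $W \in L^1[a,\infty)$ matches exactly that of the abstract theorem, and the proof reduces to verifying assumptions \Eqref{Assump1} through \Eqref{Assump5} for this concrete choice.

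Assumption \Eqref{Assump1} is already established in Lemma \ref{lemmaLt}. The remaining four are all consequences of the fixed-order large-argument asymptotics \Eqref{a.e.I.x.inf}, \eqref{a.e.K.x.inf} and the recurrence \eqref{recurrence} collected in Section \ref{s.bessel}. For \Eqref{Assump2}, the exponential decay of $K_z(\mu x)$ immediately yields $\psi_z(x) \to 0$. For \Eqref{Assump3}, applying $K_z'(t) = -K_{z+1}(t) + (z/t) K_z(t)$ together with the fact that $K_{z+1}(\mu x)/K_z(\mu x) \to 1$ gives $\psi_z'(x)/\psi_z(x) \to -\mu$, and hence $\psi_z/\psi_z'$ is bounded near infinity. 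For \Eqref{Assump4}, I will show that $p(x)\psi_z(x) = x^{3/2} K_z(\mu x)$ is eventually monotonically decreasing by differentiating and applying the same recurrence; the ratio is then controlled by $1$ on the unbounded region, and on the complementary compact region it is bounded by continuity and strict positivity of $p\psi_z$. Finally, \Eqref{Assump5} follows from $\phi_z(x)/\psi_z(x) = I_z(\mu x)/K_z(\mu x) \sim \pi^{-1} e^{2\mu x} \to \infty$, together with a direct integration giving $\int_1^x \phi_z(y)/\psi_z(y)\,dy = O(\phi_z(x)/\psi_z(x))$.

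Once these verifications are in place, the general theorem of Section \ref{ss.Pert-Boch} delivers the fundamental system $h_1, h_2$ of the asserted form \Eqref{eq.h12} and \eqref{eq.h'12}, and the Wronskian normalization $x^2 W(h_1,h_2)(x) = 1$ is built into that theorem's conclusion. The main technical care is in the Bessel-function asymptotics; the most delicate point is \Eqref{Assump4}, where the sharp asymptotic $K_{z+1}/K_z \to 1$ (rather than a mere upper bound) is needed so that the eventual sign of $(x^{3/2} K_z(\mu x))' = x^{1/2}\bl (z + 3/2) K_z(\mu x) - \mu x\, K_{z+1}(\mu x)\br$ is negative, confirming that the polynomial prefactor cannot overcome the exponential decay of $K_z$ uniformly in $x$.
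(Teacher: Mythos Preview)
Your proposal is correct and follows essentially the same approach as the paper: both reduce Theorem \ref{T.Wronsk} to the abstract B\^ocher theorem of Section \ref{ss.Pert-Boch} by verifying assumptions \Eqref{Assump1}--\eqref{Assump5} for $\psi_z,\phi_z$ via the large-argument Bessel asymptotics, with \Eqref{Assump1} already supplied by Lemma \ref{lemmaLt}. Your treatment of \Eqref{Assump4} via eventual monotonicity of $x^{3/2}K_z(\mu x)$ is more explicit than the paper's, which simply appeals to \Eqref{a.e.K.x.inf} and declares \Eqref{Assump2}--\eqref{Assump4} ``easy to see''; your added detail is welcome but not a different route.
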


\begin{proof}
We just need to verify the assumptions \Eqref{Assump2} - \eqref{Assump5}.
In view of the asymptotic expansion \Eqref{a.e.K.x.inf} it is easy to see
that $\psi$ satisfies \Eqref{Assump2} - \eqref{Assump4}. Assumption
\Eqref{Assump5} follows from the  asymptotic expansion \Eqref{a.e.I.x.inf} 
and \eqref{a.e.K.x.inf}. Namely, by these expansions
\begin{equation}	
\frac{I_z(\mu x)}{K_z(\mu x)} = O \bl e^{2\mu x}\br,\quad x\to \infty,
\end{equation} 
thus the assumption follows observing that quotient 
$\frac{\phi_z(x)}{\psi_z(x)}$ is $\frac{I_z(\mu x)}{K_z(\mu x)}$.
\end{proof}

\subsection{Asymptotics of Wronskians for the perturbed operator}
\label{s.asymptotics_Wronskians}

Consider a family of functions $W_t\in  L^{1}[a,\infty)$, depending on a 
real parameter $t$ such that $t \mapsto W_t$ is differentiable as a map 
into $L^{1}[a,\infty)$. We apply Theorem \ref{T.Wronsk} to study 
the fundamental system of solutions and their Wronskians for the 
perturbed operator
\begin{equation}\label{Pert.Op}
H_t+z^2:=(D_\mu +z^2) + X^{2}W_t.
\end{equation} 
As a notational convenience we take $t_0 = 0$ as base point.
By Theorem \ref{T.Wronsk} the equation $(H_t + z^2) f = 0$ has two solutions. Let 
$h_{1,t}(x)$ be the solution 
of $(H_t + z^2) h_{1,t} = 0$ with 
\begin{equation}\label{sec5.norm.infinity}
h_{1,t}(x) \sim  x^{-\frac{1}{2}}K_z(\mu x) = O(x^{-1}e^{-\mu x}),\quad 
x\to\infty.
\end{equation}
Note that by \Eqref{sec5.norm.infinity} the solution $h_{1,t}$ is unique as the space of 
solutions in the limit point case has dimension one.

\begin{lemma}	The solution $h_{1,t}(x)$ is differentiable in $t$ with the
estimates $\dot h_{1,t}(x) = o(x^{-1}e^{-\mu x})$, 
$\b_x \dot{h}_{1,t}(x) = o(x^{-1}e^{-\mu x})$ as 
$x\to\infty$ and the $o$-constants are locally uniform in $t$, \ie 
$h_{1,t}(x) - h_{1,0}(x) = o(x^{-1}e^{-\mu x})$ as $x\to\infty$.
\end{lemma}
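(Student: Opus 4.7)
The plan is to construct $h_{1,t}$ using the perturbative Ansatz from Section \ref{ss.Pert-Boch}, namely
\[
  h_{1,t}(x) = \psi_z(x)\bl 1 + f_{1,t}(x)\br, \qquad
  f_{1,t} = (I - L W_t)\ii (L W_t \mathbf{1}),
\]
which by Theorem \ref{T.Wronsk} gives $f_{1,t}\in C^1_\bullet[a,\infty)$. Since $t\mapsto W_t$ is differentiable as a map into $L^1[a,\infty)$ and $W\mapsto LW$ is linear and continuous into $\mathcal{L}(X^{-\gamma}C_b[a,\infty))$ (Lemma \ref{L.op}), the family $t\mapsto LW_t$ is likewise differentiable with derivative $L\dot W_t$. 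Because $LW_t$ has spectral radius zero, the Neumann series representation of $(I-LW_t)\ii$ converges uniformly on compact $t$-intervals and can be differentiated termwise, yielding
\[
  \pl_t (I-LW_t)\ii = (I-LW_t)\ii (L\dot W_t)(I-LW_t)\ii.
\]
Using the identity $(I-LW_t)\ii \mathbf{1} = \mathbf{1} + f_{1,t}$, a direct computation then gives
\[
  \dot f_{1,t} = (I - LW_t)\ii\, L\dot W_t\,(\mathbf{1} + f_{1,t}),
\]
and hence $\dot h_{1,t}(x) = \psi_z(x)\,\dot f_{1,t}(x)$.

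The decay estimates will follow from Lemma \ref{L.op}. Indeed, $g_t := L\dot W_t(\mathbf{1}+f_{1,t}) \in C^1_\bullet[a,\infty)$, and iterating $LW_t$ preserves the property of lying in $C^1_\bullet$ (this is essentially the content of the proof of Lemma \ref{L.op}: both $(LW_t f)(x)$ and $(LW_t f)'(x)$ are bounded by $\|f\|\cdot \int_x^\infty|W_t(y)|dy$, which tends to zero). Thus the Neumann sum $\dot f_{1,t} = g_t + (LW_t) g_t + (LW_t)^2 g_t + \cdots$ converges in $C_b$-norm to a function in $C^1_\bullet[a,\infty)$, giving $\dot f_{1,t}(x)\to 0$ and $\dot f_{1,t}'(x)\to 0$ as $x\to\infty$. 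Combined with the asymptotics $\psi_z(x) = O(x\ii e^{-\mu x})$ and $\psi_z'(x) = O(x\ii e^{-\mu x})$ coming from \Eqref{a.e.K.x.inf} and the recurrence \eqref{recurrence}, this produces
\[
  \dot h_{1,t}(x) = \psi_z(x)\dot f_{1,t}(x) = o(x\ii e^{-\mu x}),
\]
\[
  \pl_x \dot h_{1,t}(x) = \psi_z'(x)\dot f_{1,t}(x) + \psi_z(x)\dot f_{1,t}'(x) = o(x\ii e^{-\mu x}).
\]

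Local uniformity in $t$ is obtained by bounding the tail quantitatively: for $|t|\le t_0$ one replaces $\int_x^\infty |W_t(y)|dy$ and $\int_x^\infty |\dot W_t(y)| dy$ by their suprema over $|t|\le t_0$, which are finite by continuity of $t\mapsto W_t, \dot W_t$ in $L^1$. The final claim $h_{1,t}(x)-h_{1,0}(x) = o(x\ii e^{-\mu x})$ then follows from the fundamental theorem of calculus,
\[
  h_{1,t}(x)-h_{1,0}(x) = \int_0^t \dot h_{1,s}(x)\,ds,
\]
since the $o(x\ii e^{-\mu x})$-estimate for $\dot h_{1,s}$ is uniform in $s\in [0,t]$.

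The main obstacle is the bookkeeping required to confirm that the $C^1_\bullet$ decay is genuinely preserved under the full Neumann series $(I-LW_t)\ii$ and that the quantitative rate $\int_x^\infty|W_t(y)|dy$ controls this decay uniformly in $t$; once this is established, the rest is a routine application of Lemma \ref{L.op} and the Bessel asymptotics of Section \ref{s.bessel}.
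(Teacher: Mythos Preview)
Your proof is correct and follows essentially the same route as the paper: construct $h_{1,t}=\psi_z(1+f_{1,t})$ with $f_{1,t}=(I-LW_t)\ii(LW_t\mathbf{1})$, differentiate using the continuity of $W\mapsto LW$ from Lemma \ref{L.op} to obtain $\dot f_{1,t}=(I-LW_t)\ii L\dot W_t(\mathbf{1}+f_{1,t})\in C^1_\bullet$, and combine with the Bessel asymptotics for $\psi_z,\psi_z'$. The only cosmetic difference is in the final step: the paper appeals directly to the difference quotient $\dot h_{1,0}(x)=\lim_{t\to 0}\bl h_{1,t}(x)-h_{1,0}(x)\br/t$ together with the local uniformity, whereas you use the fundamental theorem of calculus $h_{1,t}-h_{1,0}=\int_0^t \dot h_{1,s}\,ds$; both arguments are equivalent.
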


\begin{proof} The $o$-behavior as $x\to\infty$ follows directly from
Lemma \ref{L.op}. In fact, 
\begin{equation}
	\begin{split}
    	       \dot h_{1,t} &= \psi_z \cdot \dot f_{1,t}\\
	     \b_x \dot{h}_{1,t} &= \psi'_z \cdot \dot f_{1,t} + \psi \cdot \b_x \dot{f}_1,
	\end{split} 
\end{equation} 
where by equation \Eqref{f1.LW},
\begin{equation}
	\dot f_{1,t} = (I-LW_t)^{-1}(L\dot{W}_t {\bf 1})
	+ (I-LW_t)^{-1}L\dot{W}_t(I-LW_t)^{-1}({\bf 1}),
\end{equation} 
and
\begin{equation}
	\b_x \dot{f}_{1,t} = (I-LW_t)^{-1} [ L \dot{W}_t {\bf 1}]'
	+ (I-LW_t)^{-1} [ L\dot{W}_t f_{1,t}]'.
\end{equation}
Then $ \dot{f}_{1,t}$ and $\b_x \dot{f}_{1,t} $ are in 
$C_\bullet[a,\infty)$.
	
The last part follows as the $o$-constants are locally independent of $t$ and
\begin{equation}
	 \dot{h}_{1,0}(x) = \lim_{t\to 0} \frac{h_{1,t}(x) - h_{1,0}(x)}{t}.
   \qedhere
\end{equation}
\end{proof}

Now we will choose the second solution $h_{2,t}(x)$. By equation \Eqref{f1.LW} there 
exists $x_0\in [a,\infty)$ such that $h_{1,0}(x)\not = 0$ for $x\geq x_0$. Then for 
$t$ in a neighborhood of $0$ we have $h_{1,t}(x)\not = 0$ for all $x\geq x_0$.
Thus
\begin{equation}
\begin{split}
   (xh_{1,t}(x))^{-2} - (xh_{1,0}(x))^{-2} &= \frac{(h_{1,0}(x) + 
    h_{1,t}(x))(h_{1,0}(x)-h_{1,t}(x))}{x^2 h_{1,0}^2(x)h_{1,t}^2(x)}\\
                 &= o(e^{2\mu x}), \qquad x\to\infty.
\end{split} 
\end{equation}
Define 
\begin{align}
h_{2,0}(x) = h_{1,0}(x)\int_{x_0}^{x} (yh_{1,0}(y))^{-2} dy,
\end{align} 
and 
\begin{align}
h_{2,t}(x) = h_{1,t}(x) \int_{x_0}^x (yh_{1,t}(y))^{-2} - (yh_{1,0}(y))^{-2}dy - 
\frac{h_{1,t}(x)}{h_{1,0}(x)} h_{2,0}(x).
\end{align} 

Note that, if $f(x) = o(e^{cx})$, with $c>0$ then for $x\to\infty$
\begin{equation}
\int_a^x f(x) dx = o(e^{cx}),
\end{equation} as well.

\begin{lemma}	Let $h_{1,t}, h_{2,t}$ be a fundamental system of solutions for \Eqref{Pert.Op}, 
which satisfy \Eqref{eq.h12} and \eqref{eq.h'12}. Then $h_{2,t}$ is differentiable 
in $t$ and we have
\begin{equation}
	\begin{split}
   	h_{2,t}(x) &= h_{2,0}(x)+O(x^{-1}e^{\mu x}) = O(x^{-1}e^{\mu x}),\\
	    \dot{h}_{2,t}(x)&= o(x^{-1}e^{\mu x}),\\
	    \b_x\dot{h}_{2,t}(x) &=o(x^{-1}e^{\mu x}).
	\end{split}
\end{equation}
\end{lemma}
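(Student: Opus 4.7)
The plan is to exploit the explicit definition of $h_{2,t}$ and reduce every estimate to the bounds already proved for $h_{1,t}, \dot h_{1,t}$, and $\partial_x\dot h_{1,t}$. The pivotal identity is
\begin{equation}
  (yh_{1,t}(y))^{-2}-(yh_{1,0}(y))^{-2}
     = \frac{(h_{1,0}(y)-h_{1,t}(y))(h_{1,0}(y)+h_{1,t}(y))}{y^{2}\, h_{1,0}(y)^{2}\, h_{1,t}(y)^{2}},
\end{equation}
which, combined with $h_{1,t}-h_{1,0}=o(y\ii e^{-\mu y})$ and $h_{1,0},h_{1,t}\asymp y\ii e^{-\mu y}$ (the latter holding uniformly in $t$ near $0$ once $y\ge x_0$, since $h_{1,0}$ does not vanish there and $t\mapsto h_{1,t}$ is continuous), yields $(yh_{1,t})^{-2}-(yh_{1,0})^{-2}=o(e^{2\mu y})$. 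The term $h_{2,0}$ equals $\phi_z\cdot g_2$ with $g_2$ bounded, so $h_{2,0}(x)=O(x\ii e^{\mu x})$ from the start.

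First I would prove the asymptotic statement for $h_{2,t}$ itself. Using the defining formula (with the natural sign, so that $h_{2,t}|_{t=0}=h_{2,0}$), one obtains
\begin{equation}
  h_{2,t}(x)-h_{2,0}(x)
    = h_{1,t}(x)\int_{x_{0}}^{x}\!\bigl[(yh_{1,t})^{-2}-(yh_{1,0})^{-2}\bigr]\,dy
      + \frac{h_{1,t}(x)-h_{1,0}(x)}{h_{1,0}(x)}\,h_{2,0}(x).
\end{equation}
The integrand in the first summand is $o(e^{2\mu y})$, hence its integral is $o(e^{2\mu x})$; multiplying by $h_{1,t}(x)=O(x\ii e^{-\mu x})$ produces $o(x\ii e^{\mu x})$. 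For the second summand, $(h_{1,t}-h_{1,0})/h_{1,0}=o(1)$ (locally uniformly) and $h_{2,0}=O(x\ii e^{\mu x})$ give the same bound. Both pieces are therefore $O(x\ii e^{\mu x})$, as claimed.

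Next I would justify differentiation in $t$ under the integral. The pointwise derivative
\begin{equation}
  \partial_{t}(yh_{1,t}(y))^{-2}=-\,\frac{2\,\dot h_{1,t}(y)}{y^{2}\,h_{1,t}(y)^{3}}
\end{equation}
is $o(e^{2\mu y})$ because the factor $h_{1,t}^{-3}\asymp y^{3}e^{3\mu y}$ is exactly beaten by $\dot h_{1,t}=o(y\ii e^{-\mu y})$. This estimate is locally uniform in $t$ on compact $y$-intervals, which legalizes differentiating under the integral sign and simultaneously yields the needed decay after integration. Applying $\partial_{t}$ to the defining expression and re-collecting terms gives $\dot h_{2,t}$ as a sum of products of $h_{1,t}$ or $\dot h_{1,t}$ with integrals (or values) of $o(e^{2\mu y})$ functions; each summand is then $o(x\ii e^{\mu x})$ by the same bookkeeping as in the previous step. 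For $\partial_{x}\dot h_{2,t}$ one differentiates in $x$ as well: the $x$-derivatives either bring down the integrand evaluated at $x$ (still $o(e^{2\mu x})$) multiplied by $h_{1,t}(x)$, or replace $h_{1,t}(x)$ by $h_{1,t}'(x)\asymp x\ii e^{-\mu x}$ (from the known asymptotics of $K_{z}$ and its derivative), and the same estimates reappear.

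The principal obstacle is precisely the apparent blow-up $h_{1,t}^{-3}\sim e^{3\mu y}$: one must verify that every growing factor is exactly compensated by the decaying prefactor $h_{1,t}(x)\sim e^{-\mu x}$ \emph{before} one integrates out to $\infty$. This is a delicate but purely bookkeeping exercise; the decisive input is that the $o$-bounds from the preceding lemma are locally uniform in $t$ (and hence survive both differentiation under the integral and pointwise multiplication by the singular factor $h_{1,t}^{-3}$), together with the \emph{locally uniform} non-vanishing of $h_{1,t}$ on $[x_{0},\infty)$ for small $t$. Once these are in hand, every one of the three asserted bounds follows by the same chain of estimates; in fact the two derivative estimates emerge with the sharper little-$o$ growth automatically.
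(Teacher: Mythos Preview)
Your argument is correct and follows the same route as the paper: you use the identity for $(yh_{1,t})^{-2}-(yh_{1,0})^{-2}$ to get the $o(e^{2\mu y})$ bound, integrate (using that $\int_{x_0}^x o(e^{2\mu y})\,dy=o(e^{2\mu x})$), multiply by $h_{1,t}(x)=O(x^{-1}e^{-\mu x})$, and then handle the $t$- and $x$-derivatives by the same bookkeeping together with the previous lemma. Your write-up is in fact more detailed than the paper's---you justify differentiation under the integral and spell out the compensation of the $h_{1,t}^{-3}$ blow-up, and you correctly flag the sign typo in the defining formula for $h_{2,t}$---but the underlying strategy is identical.
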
 

\begin{proof} Since
\begin{equation}
	(x\ h_{1,t}(x))^{-2} - (x\ h_{1,0}(x))^{-2} = o(e^{2x}),
\end{equation} 
the integral 
\begin{equation}
	\int_{R_0}^x (y\ h_{1,t}(y))^{-2} - (y\ h_{1,0}(y))^{-2}dy = o(e^{2x}).
\end{equation} 
This implies that $h_{2,t}(x) = O(x^{-1}e^{\mu x})$. The orders of 
$\dot{h}_{2,t}(x)$ and $\b_x \dot{h}_{2,t}(x)$ follows using last result and
the previous lemma.
\end{proof}

Now we are ready to estimate the behavior of the following Wronskians
as $x\to\infty$:
\begin{cor}\label{Wron.Corol.3} Let $h_{1,t}, h_{2,t}$ be a fundamental system of 
	solutions for \Eqref{Pert.Op}, which satisfy \Eqref{eq.h12} 
and \eqref{eq.h'12}. Then as $x\to\infty$,
\begin{equation}
	\begin{split}
	x^2 \cdot W(h_{1,t}, \dot{h}_{1,t})(x) &=  o(e^{-2\mu x});\\
	x^2 \cdot W(h_{2,t}, \dot{h}_{1,t})(x) &=  o(1);\\
	x^2 \cdot W(h_{1,t}, \dot{h}_{2,t})(x) &=  o(1);\\
	x^2 \cdot W(h_{2,t}, \dot{h}_{2,t})(x) &=  o(e^{2\mu x}).
	\end{split}
\end{equation} 
In particular, 
\begin{equation}
	\lim_{x\to\infty} x^2W(h_{1,t}, \dot{h}_{1,t})(x)= \lim_{x\to\infty} 
	x^2W(h_{2,t},\dot{h}_{1,t})(x)=0.
\end{equation}
\end{cor}
\section{Regularized determinant of the perturbed operator}
\label{s.regdet_perturbed}

In this section we establish a partial asymptotic expansion for the resolvent
trace of the perturbed operator $H+\nu^2$, which allows the definition of its 
zeta-regularized determinant.  In the case of the model operator we have the full
asymptotic expansion of the trace of $(D_\mu + z^2)^{-1}$ when $z\to\infty$. 
The Green function \Eqref{GreenFunction} and 
the uniform asymptotic expansion of the modified Bessel function \Eqref{a.e.I.nu.inf} and 
\eqref{a.e.K.nu.inf} are the main ingredients to obtain that result. This argument does 
not apply to the perturbed case, however using a Neumann series argument we can still 
derive a partial asymptotic expansion for the resolvent trace.

The results in this section are independent of the boundary
conditions at $x=a$, hence for simplicity we use Dirichlet boundary
conditions $R_a f = 0$ and by abuse of notation $D_\mu:=D_\mu (R_a)$.

\mpar{TODO: explain better, refer to corresponding argument for 
model operator, see Sec \ref{s.variation-model}}

\begin{lemma}\label{L.regdet.1}
For fixed $z$, $\mu$ and real numbers $\alpha$, $\beta$ with 
$\alpha+\beta \leq 2 $ the operator 
$X^{\alpha}(D_\mu+z^2)^{-1}X^{\beta}$  is a bounded operator in the Hilbert
space $L^{2}[a,\infty)$.
\end{lemma}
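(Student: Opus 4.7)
The plan is to apply the Schur test to the Schwartz kernel
\[
  K(x,y) := x^\alpha\, G_z(x,y)\, y^\beta
\]
of $X^\alpha (D_\mu+z^2)^{-1}X^\beta$, where $G_z$ is the explicit Green function from \Eqref{GreenFunction}.

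First I would establish a pointwise exponentially-decaying bound on $G_z$. From the large-argument asymptotics \Eqref{a.e.I.x.inf} and \Eqref{a.e.K.x.inf}, combined with continuity on any compact subinterval of $[a,\infty)$, one obtains constants $C = C(z,\mu)$ with
\[
  I_z(\mu x) \leq C\, x^{-1/2} e^{\mu x}, \qquad K_z(\mu x) \leq C\, x^{-1/2} e^{-\mu x}
\]
uniformly on $[a,\infty)$. Consequently the main term $(xy)^{-1/2} I_z(\mu \min) K_z(\mu \max)$ of $G_z$ is bounded by $C(xy)^{-1} e^{-\mu|x-y|}$, while the subtracted boundary correction from the Dirichlet condition is dominated by $C(xy)^{-1} e^{-\mu(x+y-2a)}$ and hence absorbed into the same estimate. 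Therefore
\[
  |K(x,y)| \leq C\, x^{\alpha-1}\, y^{\beta-1}\, e^{-\mu|x-y|}, \qquad x,y \in [a,\infty).
\]

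Next I would apply the Schur test with trivial weights $h_1 \equiv h_2 \equiv 1$; by the $(x,\alpha) \leftrightarrow (y,\beta)$ symmetry in both the kernel bound and the hypothesis, it suffices to bound $\sup_{x \geq a}\int_a^\infty |K(x,y)|\, dy$. Splitting at $y=x$ and substituting $u := |x-y|$, the problem reduces to controlling integrals of the form $x^{\alpha-1} \int_0^\infty (x \pm u)^{\beta-1} e^{-\mu u}\, du$. In the case $\beta \geq 1$ one bounds $(x+u)^{\beta-1} \leq 2^{\beta-1}(x^{\beta-1} + u^{\beta-1})$ and uses that $\alpha + \beta \leq 2$ forces $\alpha \leq 1$, so $x^{\alpha-1}$ is bounded. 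In the case $\beta < 1$ one splits further at $u = x/2$: on the inner region $(x \pm u)^{\beta-1} \leq C\, x^{\beta-1}$, while on the tail $u \geq x/2$ the factor $e^{-\mu u} \leq e^{-\mu x/2}$ makes the contribution exponentially small. Either way one arrives at
\[
  \int_a^\infty |K(x,y)|\, dy \leq C_\mu\, (1 + x^{\alpha+\beta-2}),
\]
which is uniformly bounded on $[a,\infty)$ precisely under the hypothesis $\alpha + \beta \leq 2$ (and the fact that $x \geq a > 0$).

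The only delicate point is the book-keeping across the two sub-cases $\beta \geq 1$ and $\beta < 1$: when $\beta > 1$ the single-variable bound $(x+u)^{\beta-1} \leq C\, x^{\beta-1}$ fails on $u \geq x$, but the constraint $\alpha + \beta \leq 2$ automatically provides the compensating decay $x^{\alpha-1} \to 0$; this interplay is what makes the hypothesis exactly sharp. Once this is in place, the Schur test delivers $\|X^\alpha (D_\mu+z^2)^{-1} X^\beta\|_{L^2 \to L^2} \leq C_\mu$, proving the lemma.
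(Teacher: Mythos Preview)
Your proof is correct and follows the same strategy as the paper: apply Schur's test with trivial weights to the kernel $x^\alpha G_z(x,y) y^\beta$, using the exponential bounds $|I_z(\mu x)| \le C x^{-1/2}e^{\mu x}$, $|K_z(\mu x)| \le C x^{-1/2}e^{-\mu x}$ coming from the large-argument asymptotics.

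The only difference is in the integral estimate. You first collapse the two pieces of the Green function into a single bound $|G_z(x,y)|\le C(xy)^{-1}e^{-\mu|x-y|}$ and then do a case split on $\beta\gtrless 1$. The paper instead keeps the product structure $\psi_z(\max)\,\phi_z(\min)$ and records once and for all the elementary inequalities
\[
  \int_a^x e^{\mu y}\, y^\gamma\, dy \;\le\; C\, e^{\mu x}\, x^\gamma,
  \qquad
  \int_x^\infty e^{-\mu y}\, y^\gamma\, dy \;\le\; C\, e^{-\mu x}\, x^\gamma,
\]
valid for \emph{every} real $\gamma$ (the exponential always dominates the power). Applying these directly to the two halves of $G_z$ gives
$\int_a^\infty x^\alpha |G_z(x,y)|\, y^\beta\, dy \le C\, x^{\alpha+\beta-2}$
in one line, with no case analysis. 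Your route reaches the same bound $C(1+x^{\alpha+\beta-2})$ but with more bookkeeping; the paper's formulation is worth internalising since it sidesteps the sub-cases entirely.
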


\begin{proof} We apply Schur's test \cite[Thm.~5.2]{HaSu} to the kernel
function of the operator. Recall from Section \ref{s.variation-model} that the
kernel of $(D_\mu +z^2)^{-1}$ is given by $G_z(x,y)$ \Eqref{GreenFunction}.
During the proof $C$ denotes a generic constant depending on $z$, $\mu$, $\ga$, $\gb$,
$\gamma$, $a$ but not on $x, y$; it may change from line to line.

From \Eqref{a.e.I.x.inf} and \eqref{a.e.K.x.inf} we conclude that
\begin{equation}
	\bigl|\psi_z(x)\bigr|\leq C \cdot \frac{e^{-\mu x}}{x} ,\qquad
	\bigl|\phi_z(x)\bigr|\leq C \cdot \frac{e^{\mu x}}{x}.
\end{equation}
Furthermore, we need the inequalities 
\begin{equation}
	\int_{a}^{x} e^{\mu y} y^\gamma dy \leq C\cdot e^{\mu x} x^\gamma,\qquad
	\int_{x}^{\infty} e^{-\mu y} y^\gamma dy \leq C\cdot e^{-\mu x} x^\gamma.
\end{equation}
We find
\begin{equation}
	\begin{split}
	  \int_{a}^{\infty} x^\alpha\cdot  |G_z(x,y)|\cdot y^\beta {\bf dy} 
        &\leq C \cdot e^{-\mu x} x^{\alpha-1} 
          	\int_{a}^{x} e^{\mu y} 	y^{\beta-1} dy \\
	&\qquad + C \cdot e^{\mu x} x^{\alpha-1} \int_{x}^{\infty} e^{-\mu y} 
	y^{\beta-1} dy \\
	&\leq C\cdot x^{\alpha+\beta -2}\leq C ,
	\end{split}
\end{equation}
and reversing the roles of $\alpha$, $\beta$,
\begin{equation}
	\int_{a}^{\infty} x^\alpha\cdot |G_z(x,y)| \cdot y^\beta {\bf dx} \leq C .
\end{equation}
With these inequalities the claim follows from
Schur's test.
\end{proof}

\begin{prop}\label{P.regdet.2}
Fix $\mu$ and let $-z^2$ be in the resolvent set of $D_\mu$.
For $\delta>0$ the operator $X^{\frac{1}{2}-\delta}\cdot (D_\mu +z^2)^{-1/2}$
is of Hilbert-Schmidt class resp. $X^{\frac{1}{2}-\delta}\cdot (D_\mu
+z^2)^{-1}\cdot X^{\frac 12 -\delta}$ is of trace class. 

Moreover, for real numbers $\alpha$, $\beta$ with $\alpha+\beta<\frac{3}{2}$,
the operator $X^{\alpha}\cdot (D_\mu + z^2)^{-1}\cdot X^\beta$ is a Hilbert-Schmidt operator.
\end{prop}

\begin{proof} Since for any two $z_1$, $z_2$ with $-z_1^2$, $-z_2^2$ in the
resolvent set the operator $	(D_\mu +z_1^2)^{-1}\cdot	(D_\mu +z_2^2) $ is bounded,
it suffices to prove the claim for $z\geq 0$. Then
\begin{equation}
	\begin{split}
    \|X^{\frac{1}{2}-\delta}\cdot (D_\mu + z^2)^{-\frac{1}{2}}\|_{\textrm{HS}} 
	&=\Tr\bl X^{\frac{1}{2}-\delta}\cdot (D_\mu +z^2)^{-1}\cdot X^{\frac{1}{2}-\delta}\br\\
	&=\int_{a}^{\infty} x^{1-2\delta}\cdot G_z(x,x) dx \leq C\int_{a}^{\infty} x^{-1-2\delta} dx < \infty,
	\end{split}
\end{equation}
since $\delta>0$. Here we have used that $G_z(x,x) = O(x^{-2}) $ as
$x\to\infty$ by \Eqref{a.e.I.x.inf} and 
\eqref{a.e.K.x.inf}.

For the second part, pick $\delta>0$ such that $2(\alpha+\beta)+2\delta<3$.
Let $k(x,y)$ be the Schwartz kernel of $(D_\mu+z^2)\ii X^{2\ga}
(D_\mu+z^2)\ii$. From the proof of the first part we infer that 
$X^{\frac 12 -\delta}\cdot (D_\mu+z^2)\ii\cdot X^{\frac 12-\delta}$ is trace class
and from Lemma \ref{L.regdet.1} we infer that 
$X^{2\beta+\delta-\frac{1}{2}}\cdot (D_\mu + z^2)\ii \cdot X^{2\alpha+\delta-\frac{1}{2}}$
is bounded. Consequently, the product of these two operators, 
$ X^{2\beta+\delta-\frac{1}{2}}\cdot(D_\mu +z^2)\ii \cdot X^{2\alpha} \cdot 
	(D_\mu + z^2)\ii\cdot X^{\frac 12-\delta} $
is trace class and Mercer's Theorem implies that
\begin{equation}
\begin{split}
    \Tr\bl  X^{2\beta+\delta-\frac{1}{2}}\cdot (D_\mu +z^2)\ii \cdot X^{2\alpha} \cdot 
	        &   (D_\mu + z^2)\ii\cdot X^{\frac 12-\delta} \br \\
            & = \int_a^\infty x^{2\gb+\delta-\frac 12} \cdot k(x,x)\cdot
                 x^{\frac 12 - \delta} dx \\
            & = \int_a^\infty x^\gb \cdot k(x,x) \cdot x^\gb dx.
\end{split}
\end{equation}             
On the other hand the operator $X^\gb \cdot (D_\mu+z^2)\ii\cdot X^{2\ga}\cdot
(D_\mu +z^2)\ii\cdot X^\gb $ is non-negative. Hence from Mercer's Theorem in the version of
Reed and Simon \cite[\S XI.4, Lemma on p. 65]{Reed} we infer
that indeed
\begin{equation}
\begin{split}
             \int_a^\infty x^\gb \cdot k(x,x) \cdot x^\gb dx
                & = \Tr\bl X^\gb \cdot (D_\mu+z^2)\ii \cdot X^{2\ga}\cdot
                (D_\mu+z^2)\ii\cdot X^\gb \br \\
                & = 
	     \|X^{\alpha}\cdot (D_\mu+z^2)^{-1}\cdot X^{\beta}\|_{\rm HS}^2.
\end{split}
\end{equation}
Since we know that the left hand side is finite we reach the conclusion.
\end{proof}
The argument using the kernel and applying Mercer's Theorem twice was
necessary to justify the manipulation
\begin{equation}
\begin{split}
    \Tr\bl  X^{2\beta+\delta-\frac{1}{2}}\cdot (D_\mu +z^2)\ii \cdot &X^{2\alpha} \cdot 
	           (D_\mu + z^2)\ii\cdot X^{\frac 12-\delta} \br  \\
       &= \Tr\bl  X^{\gb}\cdot (D_\mu +z^2)\ii \cdot X^{2\alpha} \cdot 
	           (D_\mu + z^2)\ii\cdot X^{\gb} \br.
\end{split}
\end{equation}

This rearrangement is not trivial since a priori 
$X^{\gb}\cdot (D_\mu +z^2)\ii \cdot X^{2\alpha} \cdot (D_\mu + z^2)\ii\cdot X^{\gb}$
need not be trace class.

\begin{lemma}\label{lemma2}
	For $W\in L^1[a,\infty)$ and $\eps\geq 0$ we have
	\begin{equation}
	\|X^{1-\frac{\eps}{2}} |W|^\frac{1}{2} (D_\mu+z^2)^{-\frac{1}{2}} 
	\|_{\rm HS} = O(z^{-\frac{\min(1,\eps)}{2}}),\quad z\to\infty.
	\end{equation}
\end{lemma}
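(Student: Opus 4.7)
The plan is to express the square of the Hilbert--Schmidt norm as an integral of the resolvent kernel against the weight, then control that integral using the uniform Bessel asymptotics from \Eqref{uniform1}.

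First I would write
\begin{equation*}
\|X^{1-\eps/2}|W|^{1/2}(D_\mu+z^2)^{-1/2}\|_{\rm HS}^2
  = \Tr\Bl X^{1-\eps/2}|W|^{1/2}(D_\mu+z^2)^{-1}|W|^{1/2}X^{1-\eps/2} \Br.
\end{equation*}
Since the operator inside the trace is positive and has a continuous kernel $x^{1-\eps/2}|W(x)|^{1/2} G_z(x,y) |W(y)|^{1/2} y^{1-\eps/2}$ (where $G_z$ is the Green function from \Eqref{GreenFunction}), Mercer's theorem as in the proof of Proposition \ref{P.regdet.2} gives
\begin{equation*}
\|X^{1-\eps/2}|W|^{1/2}(D_\mu+z^2)^{-1/2}\|_{\rm HS}^2
  = \int_a^\infty x^{2-\eps}\cdot |W(x)|\cdot G_z(x,x)\,dx.
\end{equation*}

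The next step is to derive a uniform bound on $G_z(x,x)$ as $z\to\infty$. From \Eqref{resolvent-kernel} and the positivity of $I_z(\mu a)/K_z(\mu a)$ we have
\begin{equation*}
G_z(x,x) \leq x^{-1}\cdot I_z(\mu x)\cdot K_z(\mu x).
\end{equation*}
Applying the uniform asymptotic expansion \Eqref{uniform1} with the substitution $x' = \mu x/z$ yields, uniformly in $x\ge a$ and for $z$ sufficiently large,
\begin{equation*}
I_z(\mu x)\cdot K_z(\mu x) = \frac{1}{2\sqrt{z^2+\mu^2 x^2}}\bl 1 + O(z^{-2})\br,
\end{equation*}
so that $G_z(x,x) \le C\cdot x^{-1}\cdot (z^2+\mu^2 x^2)^{-1/2}$.

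Substituting this bound, the estimate reduces to controlling
\begin{equation*}
I(z) := \int_a^\infty \frac{x^{1-\eps}\cdot |W(x)|}{\sqrt{z^2+\mu^2 x^2}}\,dx,
\end{equation*}
and showing $I(z) = O(z^{-\min(1,\eps)})$. The key observation is the pointwise inequality $(z^2+\mu^2 x^2)^{-1/2} \leq \min(z^{-1}, (\mu x)^{-1})$. I would split the integral at $x_0 = z/\mu$. For $\eps \geq 1$ the factor $x^{1-\eps}$ is bounded by $a^{1-\eps}$ throughout, and applying $(z^2+\mu^2 x^2)^{-1/2}\le z\ii$ on all of $[a,\infty)$ gives $I(z) \le (a^{1-\eps}/z)\|W\|_{L^1} = O(z^{-1})$. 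For $0\le \eps<1$, on $[a,z/\mu]$ I would use $x^{1-\eps}\le (z/\mu)^{1-\eps}$ together with $(z^2+\mu^2x^2)^{-1/2}\le z\ii$ to get a contribution of order $z^{-\eps}$; on $[z/\mu,\infty)$ I would use $(z^2+\mu^2x^2)^{-1/2}\le (\mu x)\ii$ together with $x^{-\eps}\le (z/\mu)^{-\eps}$ to obtain the same order $z^{-\eps}$. Taking the square root produces the claimed $O(z^{-\min(1,\eps)/2})$.

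There is no serious obstacle here. The only point that requires care is ensuring that the uniform Bessel asymptotic \Eqref{uniform1} really gives a bound uniform in $x\ge a$ (so that the dependence on $x$ is fully captured by $\sqrt{z^2+\mu^2 x^2}$), and then organizing the two regimes $x\lessgtr z/\mu$ so that the exponent $\min(1,\eps)$ emerges correctly in both cases.
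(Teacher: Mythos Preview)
Your proof is correct and follows essentially the same route as the paper: express the Hilbert--Schmidt norm squared as $\int_a^\infty x^{2-\eps}|W(x)|G_z(x,x)\,dx$ via Mercer, then control this using the uniform Bessel asymptotics \Eqref{uniform1}. The only organizational differences are that the paper keeps both summands of $G_z(x,x)$ from \Eqref{resolvent-kernel} and estimates the boundary term $\frac{I_z(\mu a)}{K_z(\mu a)}K_z^2(\mu x)$ separately (using \Eqref{asymp.nu.I.xfixed}, \eqref{asymp.nu.K.xfixed}, \eqref{a.e.K.nu.inf}), whereas you simply drop it by positivity; and the paper substitutes $x\mapsto xz/\mu$ and invokes the pointwise inequality $x^{1-\eps}/\sqrt{1+x^2}\le 1$ (valid for $0\le\eps\le 1$) in place of your explicit split at $x=z/\mu$, which is the same estimate in different clothing.
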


\begin{proof} 
	For $\eps> 1$ we estimate,
	\begin{equation}
	\|X^{1-\frac{\eps}{2}} |W|^{\frac{1}{2}}(D_\mu +z^2)^{-\frac{1}{2}}\|_{\rm HS} 
	\leq \||W|^{\frac{1}{2}}(D_\mu +z^2)^{-\frac{1}{2}}\|_{\rm HS},
	\end{equation} hence it suffices to prove the Lemma for $0\leq \eps\leq 1$.
	
	The same argument as in the proof of Proposition \ref{P.regdet.2} shows that
	\begin{equation}
	X^{1-\frac{\eps}{2}} |W|^\frac{1}{2} (D_\mu+z^2)^{-1} 
	|W|^{\frac{1}{2}}X^{1-\frac{\eps}{2}}
	\end{equation} 
	is trace class, hence 
	$X^{1-\frac{\eps}{2}}|W|^{\frac{1}{2}}(D_\mu+z^2)^{-\frac{1}{2}}$ is 
	a Hilbert-Schmidt operator. We have
	\begin{equation}
	\begin{split}
	\|X^{1-\frac{\eps}{2}} |W|^\frac{1}{2} (D_\mu+z^2)^{-\frac{1}{2}}\|_{\rm HS}^2 &= 
	\Tr\left(X^{1-\frac{\eps}{2}} |W|^\frac{1}{2} (D_\mu+z^2)^{-1} 
	|W|^{\frac{1}{2}}X^{1-\frac{\eps}{2}}\right)\\
	&=\int_{a}^{\infty} x^{2-\eps} |W(x)|G_z(x,x) dx\\
	&=\frac{z}{\mu}\int_{\frac{\mu a}{z}}^{\infty} \left(\frac{xz}{\mu}\right)^{2-\eps} 
	\left|W\left(\frac{xz}{\mu}\right)\right| 
	G_z\left(\frac{xz}{\mu},\frac{xz}{\mu}\right)dx.
	\end{split}
	\end{equation}
	Before we estimate the integral note that, for all $x\geq 0$,
	\begin{equation}
	\frac{x^{1-\eps}}{(1+x^2)^{\frac{1}{2}}}\leq 1.
	\end{equation} 
	According to \Eqref{resolvent-kernel} we split the integral into a sum and estimate each 
	summand separately. For the first integral, we use the asymptotic expansion 
	\Eqref{uniform1},
	\begin{equation}
	\begin{split}
	\frac{z^{2-\eps}}{\mu^{2-\eps}}\int_{\frac{\mu a}{z}}^{\infty}x^{1-\eps} 
	\left|W \left(\frac{xz}{\mu}\right)\right|I_z(xz)K_z(xz)dx &\leq C_1 
	\frac{z^{1-\eps}}{\mu^{2-\eps}}\int_{\frac{\mu a}{z}}^{\infty}
	\frac{x^{1-\eps}}{(1+x^2)^{\frac{1}{2}}} 
	\left|W \left(\frac{xz}{\mu}\right)\right| dx\\
	&\leq C_2 z^{-\eps}.
	\end{split}
	\end{equation}
	For the second integral we use the asymptotic expansions  
	\Eqref{asymp.nu.I.xfixed}, \eqref{asymp.nu.K.xfixed} and \eqref{a.e.K.nu.inf}, and 
	obtain
\begin{equation}
	\frac{z^{2-\eps}}{\mu^{2-\eps}}\frac{I_z(\mu)}{K_z(\mu)} 
	\int_{\frac{\mu a}{z}}^{\infty}x^{1-\eps}\left|W 
	\left(\frac{xz}{\mu}\right)\right| K_z^2(xz) dx \leq C_3 
	\frac{z^{1-\eps}}{\mu^{2-\eps}}
	\int_{\frac{\mu a}{z}}^{\infty} \left|W \left(\frac{xz}{\mu}\right)\right| dx
	\leq C_4 z^{-\eps}.\qedhere
	\end{equation}
\end{proof}

\begin{theorem}\label{Theo.Pert.Trace} 
Let $W\in L^1[a,\infty)$ and $\eps>0$. Let $R_a$ be a
boundary condition (Dirichlet or generalized Neumann)
at $a$. By slight abuse of notation let $D_\mu:= D_\mu(R_a)$.
Then the resolvent, $(D_\mu +X^{2-\eps} W + z^2)^{-1}$, is trace class
and
\begin{equation}\label{eq.regdet.3}
\|(D_\mu +X^{2-\eps} W + z^2)^{-1}  -(D_\mu + z^2)^{-1}\|_{\rm tr}
   = O(z^{-2-\min(1,\eps)}),\qquad \text{ as }	z\to \infty.
\end{equation}
Consequently,
\begin{equation}\label{eq.regdet.4}
\begin{split}
    \Tr(D_\mu &+ X^{2-\eps} W + z^2)^{-1} 
      = \Tr (D_\mu +z^2)^{-1} + O(z^{-2-\min(1,\eps)}) \\
     & = b_0 \cdot z\ii \cdot \log z + a_0\cdot z\ii + a_1 \cdot z^{-2}
             + O(z^{-2-\min(1,\eps)}), \quad\text{ as } z\to\infty,
\end{split}
\end{equation}
where the constants $b_0, a_0, a_1$ are those of Prop. \ref{Asympt.Dt}.
Consequently, the zeta-regularized determinant of the operator
$D_\mu + X^{2-\eps} W + z^2$ is well-defined as explained in 
the Introduction Sec.~\ref{s.intro} for any $z \geq 0$.
\end{theorem}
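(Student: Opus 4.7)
The strategy is the second resolvent identity combined with the Hilbert--Schmidt estimate of Lemma \ref{lemma2}. Set $V:=X^{2-\eps}W$, write $R(z):=(D_\mu+z^2)\ii$ and $R_V(z):=(D_\mu+V+z^2)\ii$, and factor
\begin{equation*}
    V=A\,J\,A,\qquad A:=X^{1-\eps/2}|W|^{1/2},\qquad J:=\operatorname{sgn}(W),
\end{equation*}
so that $A$ is a self-adjoint multiplication operator and $J$ is bounded with $\|J\|\le 1$. Self-adjointness of $D_\mu+V$ on the domain inherited from $D_\mu$ follows from Theorem \ref{T.Wronsk} together with Weyl's limit-point criterion: the perturbation preserves the exponential asymptotics of a fundamental system at infinity, so $\infty$ remains in the limit point case.

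Lemma \ref{lemma2} gives $\|AR(z)^{1/2}\|_{\rm HS}=O(z^{-\min(1,\eps)/2})$, and combined with the spectral-theorem bound $\|R(z)^{1/2}\|=O(z\ii)$ one obtains
\begin{equation*}
    \|AR(z)\|_{\rm HS}\le\|AR(z)^{1/2}\|_{\rm HS}\cdot\|R(z)^{1/2}\|=O(z^{-1-\min(1,\eps)/2}),
\end{equation*}
and the analogous bound for $\|R(z)A\|_{\rm HS}$ by taking adjoints. A H\"older-type estimate for Schatten norms yields $\|R(z)VR(z)\|_{\rm tr}\le\|R(z)A\|_{\rm HS}\|J\|\|AR(z)\|_{\rm HS}=O(z^{-2-\min(1,\eps)})$, which in particular implies $\|R(z)V\|\to 0$ as $z\to\infty$. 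Therefore, for all sufficiently large $z$, the Neumann series
\begin{equation*}
    R_V(z)=(I+R(z)V)^{-1}R(z)
\end{equation*}
converges in operator norm and coincides with the resolvent of the self-adjoint extension of $D_\mu+V$. Consequently $\|R_V(z)A\|_{\rm HS}=\|(I+R(z)V)^{-1}R(z)A\|_{\rm HS}=O(z^{-1-\min(1,\eps)/2})$.

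The second resolvent identity $R_V(z)-R(z)=-R_V(z)VR(z)$ together with the H\"older estimate gives
\begin{equation*}
    \|R_V(z)-R(z)\|_{\rm tr}\le\|R_V(z)A\|_{\rm HS}\|J\|\|AR(z)\|_{\rm HS}=O(z^{-2-\min(1,\eps)}),
\end{equation*}
which is \Eqref{eq.regdet.3}. Taking traces and inserting the model expansion from Proposition \ref{Asympt.Dt} produces \Eqref{eq.regdet.4}. Since this expansion contains no terms of the form $z^{-2}\log^k z$ with $k\ge 1$, the general discussion of Section \ref{ss.gen.ZetaDeterminant} (applied with the substitution $x=z^2$) shows that $\zeta_{H+\nu^2}$ is regular at $s=0$, so $\detz(H+\nu^2)$ is well defined.

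The main technical obstacle is the transfer of the Hilbert--Schmidt bound from $AR(z)$ to $AR_V(z)$. Because $V$ is not $D_\mu$-bounded in the Kato sense, ordinary relative boundedness is unavailable; instead one must exploit the smallness of $\|R(z)V\|$ for large $z$, which makes the Neumann-series step the crucial intermediate ingredient.
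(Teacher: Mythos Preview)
Your approach is essentially the paper's: both factor $V=AJA$ with $A=X^{1-\eps/2}|W|^{1/2}$, invoke Lemma~\ref{lemma2}, and sum a Neumann series. The paper organizes the series symmetrically,
\[
R_V(z)-R(z)=\sum_{n\ge 1}(-1)^n\,R(z)^{1/2}\bigl[R(z)^{1/2}VR(z)^{1/2}\bigr]^{n}R(z)^{1/2},
\]
and bounds the $n$-th summand by $\|R(z)\|\cdot\|AR(z)^{1/2}\|_{\rm HS}^{2n}\le Cz^{-2}\bigl(z^{-\min(1,\eps)}\bigr)^{n}$, which yields \eqref{eq.regdet.3} after summation. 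Your version via the second resolvent identity is an equivalent repackaging.

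There is, however, a genuine slip in your Neumann-series step. You assert that the trace-norm smallness of $R(z)VR(z)$ implies $\|R(z)V\|\to 0$, and then invert $I+R(z)V$. But $W$ is only assumed to lie in $L^1$, not $L^\infty$, so the multiplication operator $V=X^{2-\eps}W$ is in general unbounded, and $R(z)V$ need not even be bounded on $L^2$: a local singularity such as $W(x)\sim(x-b)^{-3/4}$ near some $b>a$ already gives $V\phi\notin L^2$ for smooth $\phi$ with $\phi(b)\ne 0$. The quantity that Lemma~\ref{lemma2} actually controls is
\[
\bigl\|R(z)^{1/2}VR(z)^{1/2}\bigr\|\le\|AR(z)^{1/2}\|_{\rm HS}^{2}=O\bigl(z^{-\min(1,\eps)}\bigr),
\]
and this is what makes the paper's symmetrized series converge. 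Equivalently, in your framework you should invert the Birman--Schwinger sandwich $I+JAR(z)A$ rather than $I+R(z)V$; from the resolvent identity one then gets $AR_V=AR-(ARA)J\,(I+JARA)^{-1}JAR$, which gives the desired bound $\|AR_V(z)\|_{\rm HS}=O(z^{-1-\min(1,\eps)/2})$, and the rest of your argument goes through unchanged.
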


\begin{proof} We apply the Neumann series, a priori formally,
\begin{equation}
	\begin{split}
	&(D_\mu + X^{2-\eps}W + z^2)^{-1}  - (D_\mu  + z^2)^{-1}  \\
	&=\sum_{n=1}^{\infty} 
	(-1)^n (D_\mu +z^2)^{-\frac{1}{2}} 
	\left[(D_\mu+z^2)^{-\frac{1}{2}} 
	X^{1-\frac{\eps}{2}}W X^{1-\frac{\eps}{2}} (D_\mu + z^2)^{-\frac{1}{2}} 
	\right]^n 
	(D_\mu + z^2)^{-\frac{1}{2}}\\
	&=\sum_{n=1}^{\infty} (-1)^n E_n(z).
	\end{split}
\end{equation}
We estimate each summand using Lemma \ref{lemma2},
\begin{equation}
	\begin{split}
	\|E_n&(z)\|_{\rm tr} \leq \|(D_\mu+z^2)^{-\frac{1}{2}}\|_{L^2}^2
	\|(D_\mu+z)^{-\frac{1}{2}} X^{1-\frac{\eps}{2}}|W| 
	X^{1-\frac{\eps}{2}} (D_\mu + z^2)^{-\frac{1}{2}} \|^n_{\rm tr}\\
	&\leq \|(D_\mu+z^2)^{-1}\|_{L^2} \| (D_\mu+z^2)^{-\frac{1}{2}} 
	X^{1-\frac{\eps}{2}}|W|^{\frac{1}{2}} 
	\|_{\rm HS}^n \||W|^{\frac{1}{2}} X^{1-\frac{\eps}{2}} (D_\mu + 
	z^2)^{-\frac{1}{2}}\|_{\rm HS}^n\\
	&\leq C z^{-2} \| (D_\mu+z^2)^{-\frac{1}{2}} 
	X^{1-\frac{\eps}{2}}|W|^{\frac{1}{2}} 
	\|_{\rm HS}^n \||W|^{\frac{1}{2}} X^{1-\frac{\eps}{2}} (D_\mu + 
	z^2)^{-\frac{1}{2}}\|_{\rm HS}^n\\
  &\leq C \cdot z^{-2} \cdot \bl z^{-\min(1,\eps)}\br^n.
	\end{split}  
\end{equation}
This shows that for $z$ large enough the Neumann series indeed
converges in the trace norm and that \Eqref{eq.regdet.3}
holds.
\end{proof}

\section{Variation formula and the determinant of the perturbed operator}
\label{s.regdet}

In this section we prove our main result, Theorem \ref{T.main}. The next theorem ge\-ne\-ra\-li\-zes
Theorem \ref{Theo.Vert}. 

\begin{theorem}\label{W-variation-thm}
Let $W_t$ be a differentiable family of functions in $L^{1}[a,\infty)$
and	$\eps>0$. As a notational convenience we take $t_0 = 0$. Fix $\nu \geq 0$ and 
consider the perturbed operator
\begin{equation}
	H_t:= D_\mu + X^{2-\eps}W_t.
\end{equation} 
Furthermore, let $R_a$ be a boundary condition at $a$. Let $\psi_t$, 
$\phi_t$ be
a fundamental system of solutions to the equation $(H_t + \nu^2)u = 0$, where $\phi_t$ is
normalized in the sense of \Eqref{INTRO.eq.3} and $\psi_t$ satisfies
\begin{equation}\label{norm.sec7.infinity}
\lim\limits_{x\to \infty} \psi_t(x) \, \sqrt{x}  \, K_\nu(\mu x)^{-1} = 1.
\end{equation}
Assume that $H_{0}$ is invertible. Then we have for the variation of the 
zeta-regularized
determinant
\begin{equation}
	\frac{d}{dt} \log\detz (H_t + \nu^2) \bigr|_{t=0} = \frac{d}{dt} \log\bl x^2
  \cdot W(\psi_t,\phi_t)(x)\br \bigr|_{t=0}.
	\end{equation}
\end{theorem}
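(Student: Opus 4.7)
The plan is to adapt the general variation formula of Proposition~\ref{GEN-Trace-Formula} to the noncompact zeta-regularized setting of the operator $H_t+\nu^2$. Starting from the representation
\begin{equation*}
  \log\detz(H_t+\nu^2) = -2 \regint_{\nu}^{\infty} z\cdot \Tr(H_t+z^2)\ii\, dz,
\end{equation*}
which is legitimate by Theorem~\ref{Theo.Pert.Trace}, I would differentiate in $t$ inside the regularized integral. Using the resolvent identity $\partial_t (H_t+z^2)\ii = -(H_t+z^2)\ii \dot V_t (H_t+z^2)\ii$ (with $\dot V_t := X^{2-\eps}\dot W_t$), cyclicity of the trace, and the elementary identity $2z(H_0+z^2)^{-2} = -\frac{d}{dz}(H_0+z^2)\ii$, one obtains formally
\begin{equation*}
  \frac{d}{dt}\log\detz(H_t+\nu^2)\bigr|_{t=0}
   = -\regint_{\nu}^{\infty} \frac{d}{dz}\Tr\bl \dot V_0 (H_0+z^2)\ii\br\, dz.
\end{equation*}
By the very definition of the regularized integral the right-hand side equals $\Tr\bl \dot V_0 (H_0+\nu^2)\ii\br - \LIM_{z\to\infty}\Tr\bl \dot V_0 (H_0+z^2)\ii\br$.

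Three technical facts then close the argument. First, $\dot V_0 (H_0+z^2)\ii$ must be of trace class, which I would prove through the factorization
\begin{equation*}
 \dot V_0 = \bl X^{1-\eps/2}|\dot W_0|^{1/2}\br\cdot \operatorname{sgn}(\dot W_0)\cdot \bl|\dot W_0|^{1/2}X^{1-\eps/2}\br
\end{equation*}
combined with the Hilbert--Schmidt estimates of Lemma~\ref{lemma2} (transferred from $D_\mu$ to $H_0$ by the Neumann series argument of Theorem~\ref{Theo.Pert.Trace}). The same estimates give $\Tr\bl\dot V_0 (H_0+z^2)\ii\br = O(z^{-\min(1,\eps)})$ as $z\to\infty$, so the regularized limit above vanishes. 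Second, Proposition~\ref{GEN-Trace-Formula} applied to the family $H_t+\nu^2$ (with $p(x)=x^2$) yields
\begin{equation*}
  \Tr\bl\dot V_0 (H_0+\nu^2)\ii\br
    = \partial_t\log\bl x^2\cdot W(\psi_t,\phi_t)\br\bigr|_{t=0},
\end{equation*}
provided that $\lim_{x\to\infty} x^2 W(\phi_0,\dot\psi_0)(x)=0$. Third, this last condition is exactly what Corollary~\ref{Wron.Corol.3} supplies: because $\psi_t$ is uniquely fixed by the normalization \Eqref{norm.sec7.infinity} it must coincide with the distinguished solution $h_{1,t}$ of Section~\ref{Bocher}, so $\dot\psi_0 = \dot h_{1,0}$; writing $\phi_0 = \alpha h_{1,0} + \beta h_{2,0}$ for suitable scalars $\alpha,\beta$, the Corollary gives $x^2 W(\phi_0,\dot\psi_0) = \alpha\cdot o(e^{-2\mu x})+\beta\cdot o(1) = o(1)$ as $x\to\infty$.

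The main obstacle is the rigorous justification of differentiation under the regularized integral sign: this requires that the expansion \Eqref{eq.INTRO.resolvent.1} for $\Tr(H_t+z^2)\ii$ holds locally uniformly in $t$, together with a locally uniform bound on $\|\partial_t(H_t+z^2)\ii\|_{\tr}$ by an integrable function of $z$. Both can be extracted by revisiting the Neumann series in the proof of Theorem~\ref{Theo.Pert.Trace} with $W_t$ replacing $W$: since $t\mapsto W_t$ is differentiable as a map into $L^1[a,\infty)$, each term $E_n(z)$ in that series admits a $t$-derivative satisfying the same geometric trace-norm bound, which furnishes the required uniformity. This is the only place where the differentiability hypothesis on $t\mapsto W_t$ is used in an essential way.
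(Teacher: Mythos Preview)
Your proposal is correct and follows essentially the same route as the paper: differentiate the integral representation of $\log\detz$, use the resolvent identity together with $2z(H_0+z^2)^{-2}=-\partial_z(H_0+z^2)\ii$ to reduce to $\Tr\bl\dot V_0(H_0+\nu^2)\ii\br$, and then invoke Proposition~\ref{GEN-Trace-Formula} and Corollary~\ref{Wron.Corol.3}. The one simplification the paper makes, which dissolves what you flag as the ``main obstacle,'' is to work with the \emph{difference} $\log\detz(H_t+\nu^2)-\log\detz(H_0+\nu^2)$ from the start: by \Eqref{eq.regdet.3} this is an \emph{ordinary} absolutely convergent integral, so the Dominated Convergence Theorem applies directly and no separate analysis of differentiation under a regularized integral (or of the $\LIM$ term) is needed.
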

This theorem contains Theorem \ref{Theo.Vert} as a special case;
just put $W_t = t^2 X^{-3/2},$ $t = z,$ $\nu = 0,$ and $\eps = 1/2$. 

\begin{proof}
This theorem is a consequence of Proposition \ref{GEN-Trace-Formula}, Theorem 
\ref{T.Wronsk}, Corollary \ref{Wron.Corol.3}, Lemma 
\ref{lemma2} and Theorem 
\ref{Theo.Pert.Trace}. 
Firstly, by Theorem \ref{Theo.Pert.Trace} 
the zeta-determinant is defined for $H_t+\nu^2$ for any $t$ and $\nu \geq 0$,
and the difference,
\begin{equation}
	\log\detz (H_t + \nu^2) - \log\detz (H_{0} + \nu^2) = -2\int_{\nu}^{\infty} 
	z\Bl \Tr(H_t +z^2)^{-1} - \Tr(H_{0} +z^2)^{-1}\Br dz,
\end{equation} 
is well-defined by \Eqref{eq.regdet.3}. 
Moreover, Theorem \ref{Theo.Pert.Trace} shows that the map $t\mapsto (H_t+z^2)\ii$ 
is differentiable as a map into the space of trace class operators
and hence
\begin{equation}
	\begin{split}
    	\frac{d}{dt} z \Tr(H_t + z^2)\ii \bigr|_{t=0}
        &= z \Tr\bl (H_t + z^2)\ii (\pl_t W_t) (H_t+z^2)\ii \br\bigr|_{t=0}\\
	      &= - \frac{1}{2} \frac{d}{dz}\Tr\Bl (\pl_t W_t) (H_t + z^2)\ii \Br\bigr|_{t=0},
\end{split}
\end{equation} 
and it follows from the proof of Lemma \ref{lemma2} that the latter is $O(z^{-2-\min(1,\eps)})$ 
as $t\to\infty$ locally uniformly
in $t$. Therefore, by Dominated Convergence Theorem, we may differentiate under
the integral and find
\begin{equation}
  \frac{d}{dt} \log \detz \bl H_t + \nu^2 \br \bigr|_{t=0}= \Tr((\pl_t W_{t})\bl H_{t} 
  + \nu^2 \br\ii) 
  \bigr|_{t=0}.
	\end{equation} 
By Theorem \ref{T.Wronsk}, there exists a fundamental system of solutions to 
$(H_t+\nu^2)u=0$ given by $h_{1,t}$ and $h_{2,t}$, such that $h_{1,t}$ satisfy 
\Eqref{norm.sec7.infinity}. Consider $\phi_t$ a linear combination of $h_{1,t}$ and 
$h_{2,t}$ normalized as \Eqref{INTRO.eq.3} and $\psi_t = h_{1,t}$.
Now the result follows from Corollary \ref{Wron.Corol.3} and Proposition 
\ref{GEN-Trace-Formula}.
\end{proof}

\subsubsection*{Proof of Theorem \ref{T.main}}
We conclude the section with a proof of our main result, Theorem \ref{T.main}. 
We conclude by Theorem \ref{W-variation-thm} for any $\gamma < 2$,  
potential $V\in X^{\gamma}L^1[a,\infty)$ and boundary conditions $R_a$ at $x=a$
\begin{align}
\detz \bl H(R_a) + \nu^2 \br = c_0(a, \mu) \cdot a^2 \cdot W(\psi, \phi)(a),
\end{align}
where the constant $c_0(a,\mu)$ does not depend on $V$. The same argument as in Theorem 
\ref{T.ModelDetFormula} shows that $c_0(a, \mu)$ does not depend on the boundary 
condition. In particular, the equality holds in the special case of a trivial 
potential $V\equiv 0$ and hence by Theorem \ref{T.ModelDetFormula}
\begin{align}
c_0(a, \mu) &= \frac{\detz \left( H(R_a) + \nu^2\right)}{a^2\cdot W(\psi, \phi)(a)} = 
\sqrt{\frac{2}{\pi}}.
\end{align}
This completes the proof\footnote{A posteriori $c(a,\mu)$ does not depend on $a$ 
	and 
	$\mu$ either.}.\qed


\bibliography{mlbib,localbib}

\def\cprime{$'$}
\providecommand{\bysame}{\leavevmode\hbox to3em{\hrulefill}\thinspace}
\providecommand{\MR}{\relax\ifhmode\unskip\space\fi MR }
\providecommand{\MRhref}[2]{%
  \href{http://www.ams.org/mathscinet-getitem?mr=#1}{#2}
}
\providecommand{\href}[2]{#2}
\begin{thebibliography}{\textsc{MeMe83}}

\bibitem[\textsc{AbSt92}]{AS}
\textsc{M.~Abramowitz} and \textsc{I.~A. Stegun} (eds.), \emph{Handbook of
  mathematical functions with formulas, graphs, and mathematical tables}, Dover
  Publications, Inc., New York, 1992, Reprint of the 1972 edition. \MR{1225604}

\bibitem[\textsc{Ara83}]{Ara83}
\textsc{J.~Aramaki}, \emph{Complex powers of a class of pseudodifferential
  operators and their applications}, Hokkaido Math. J. \textbf{12} (1983),
  no.~2, 199--225. \MR{707571}

\bibitem[\textsc{BFK95}]{BFK:DEB}
\textsc{D.~Burghelea}, \textsc{L.~Friedlander}, and \textsc{T.~Kappeler},
  \emph{On the determinant of elliptic boundary value problems on a line
  segment}, Proc. Amer. Math. Soc. \textbf{123} (1995), no.~10, 3027--3038.
  \MR{1301012 (95m:58131)}

\bibitem[\textsc{Che79}]{Che:SGS}
\textsc{J.~Cheeger}, \emph{On the spectral geometry of spaces with cone-like
  singularities}, Proc. Nat. Acad. Sci. U.S.A. \textbf{76} (1979), no.~5,
  2103--2106. \MR{530173 (80k:58098)}

\bibitem[\textsc{Che83}]{Che:SGSR}
\bysame, \emph{Spectral geometry of singular {R}iemannian spaces}, J.
  Differential Geom. \textbf{18} (1983), no.~4, 575--657 (1984). \MR{730920
  (85d:58083)}

\bibitem[\textsc{Fri89}]{Fri89}
\textsc{L.~Friedlander}, \emph{The asymptotics of the determinant function for
  a class of operators}, Proc. Amer. Math. Soc. \textbf{107} (1989), no.~1,
  169--178. \MR{975642}

\bibitem[\textsc{GrRy15}]{GR}
\textsc{I.~S. Gradshteyn} and \textsc{I.~M. Ryzhik}, \emph{Table of integrals,
  series, and products}, eighth ed., Elsevier/Academic Press, Amsterdam, 2015,
  Translated from the Russian, Translation edited and with a preface by Daniel
  Zwillinger and Victor Moll, Revised from the seventh edition [MR2360010].
  \MR{3307944}

\bibitem[\textsc{HaSp11}]{HS}
\textsc{L.~Hartmann} and \textsc{M.~Spreafico}, \emph{The analytic torsion of a
  cone over an odd dimensional manifold}, J. Geom. Phys. \textbf{61} (2011),
  no.~3, 624--657. \texttt{arXiv:1001.4755 [math.DG]}, \MR{2763625}

\bibitem[\textsc{HaSp16}]{HS-R-torsion}
\bysame, \emph{On the {C}heeger--{M}\"uller theorem for an even-dimensional
  cone}, St. Petersburg Math. J. \textbf{27} (2016), no.~1, 137--154.
  \MR{3443271}

\bibitem[\textsc{HaSu78}]{HaSu}
\textsc{P.~R. Halmos} and \textsc{V.~S. Sunder}, \emph{Bounded integral
  operators on {$L\sp{2}$} spaces}, Ergebnisse der Mathematik und ihrer
  Grenzgebiete [Results in Mathematics and Related Areas], vol.~96,
  Springer-Verlag, Berlin-New York, 1978. \MR{517709}

\bibitem[\textsc{Les98}]{Les:DRS}
\textsc{M.~Lesch}, \emph{Determinants of regular singular {S}turm-{L}iouville
  operators}, Math. Nachr. \textbf{194} (1998), 139--170.
  \texttt{arXiv:math/9902114 [math.DG]}, \MR{1653090 (99j:58220)}

\bibitem[\textsc{LeSm77}]{LevSmi1977}
\textsc{S.~Levit} and \textsc{U.~Smilansky}, \emph{A theorem on infinite
  products of eigenvalues of {S}turm-{L}iouville type operators}, Proc. Amer.
  Math. Soc. \textbf{65} (1977), no.~2, 299--302. \MR{0457836}

\bibitem[\textsc{LeTo98}]{LesTol:DOD}
\textsc{M.~Lesch} and \textsc{J.~Tolksdorf}, \emph{On the determinant of
  one-dimensional elliptic boundary value problems}, Comm. Math. Phys.
  \textbf{193} (1998), no.~3, 643--660. \texttt{arXiv:dg-ga/9707022},
  \MR{1624851 (2000b:58060)}

\bibitem[\textsc{LeVe11}]{LesVer}
\textsc{M.~Lesch} and \textsc{B.~Vertman}, \emph{Regular singular
  {S}turm--{L}iouville operators and their zeta-determinants}, J. Funct. Anal.
  \textbf{261} (2011), no.~2, 408--450. \texttt{arXiv:1005.0368 [math.SP]},
  \MR{2793118}

\bibitem[\textsc{LeVe15}]{LesVer2015}
\bysame, \emph{Regularizing infinite sums of zeta-determinants}, Math. Ann.
  \textbf{361} (2015), no.~3-4, 835--862. \texttt{arXiv:1306.0780 [math.SP]},
  \MR{3319550}

\bibitem[\textsc{LMP12}]{LMP:CCC}
\textsc{M.~Lesch}, \textsc{H.~Moscovici}, and \textsc{M.~J. Pflaum},
  \emph{Connes-{C}hern character for manifolds with boundary and eta cochains},
  Mem. Amer. Math. Soc. \textbf{220} (2012), no.~1036, viii+92.
  \texttt{arXiv:0912.0194 [math.OA]}, \MR{3025890}

\bibitem[\textsc{MeMe83}]{MM}
\textsc{R.~Melrose} and \textsc{G.~Mendoza}, \emph{Elliptic operators of
  totally characteristic type}, MSRI preprint, 1983.

\bibitem[\textsc{M{\"u}l83}]{Mue-cusp}
\textsc{W.~M{\"u}ller}, \emph{Spectral theory for {R}iemannian manifolds with
  cusps and a related trace formula}, Math. Nachr. \textbf{111} (1983),
  197--288. \MR{725778}

\bibitem[\textsc{M{\"u}Ve14}]{MV}
\textsc{W.~M{\"u}ller} and \textsc{B.~Vertman}, \emph{The metric anomaly of
  analytic torsion on manifolds with conical singularities}, Comm. Partial
  Differential Equations \textbf{39} (2014), no.~1, 146--191.
  \texttt{arXiv:1004.2067 [math.SP]}, \MR{3169782}

\bibitem[\textsc{Olv97}]{Olv}
\textsc{F.~W.~J. Olver}, \emph{Asymptotics and special functions}, AKP
  Classics, A K Peters, Ltd., Wellesley, MA, 1997, Reprint of the 1974 original
  [Academic Press, New York; MR0435697 (55 \#8655)]. \MR{1429619}

\bibitem[\textsc{ReSi79}]{Reed}
\textsc{M.~Reed} and \textsc{B.~Simon}, \emph{Methods of modern mathematical
  physics. {III}}, Academic Press [Harcourt Brace Jovanovich, Publishers], New
  York-London, 1979, Scattering theory. \MR{529429}

\bibitem[\textsc{Shu01}]{Shu01}
\textsc{M.~A. Shubin}, \emph{Pseudodifferential operators and spectral theory},
  second ed., Springer-Verlag, Berlin, 2001, Translated from the 1978 Russian
  original by Stig I. Andersson. \MR{1852334}

\bibitem[\textsc{SiHo11}]{Sidi}
\textsc{A.~Sidi} and \textsc{P.~E. Hoggan}, \emph{Asymptotics of modified
  {B}essel functions of high order}, Int. J. Pure Appl. Math. \textbf{71}
  (2011), no.~3, 481--498. \MR{2883984}

\bibitem[\textsc{Ver14}]{Ver}
\textsc{B.~Vertman}, \emph{Cheeger--{M}\"uller theorem on manifolds with
  cusps},  \texttt{arXiv:1411.0615}.

\bibitem[\textsc{Wat95}]{Wat}
\textsc{G.~N. Watson}, \emph{A treatise on the theory of {B}essel functions},
  Cambridge Mathematical Library, Cambridge University Press, Cambridge, 1995,
  Reprint of the second (1944) edition. \MR{1349110}

\end{thebibliography}
\bibliographystyle{amsalpha-lmp}

\end{document}